\pgfplotsset{compat=newest}
\newlength\fwidth
\newlength\fheight
\newtheorem{thm}{Theorem}
\newtheorem{lemma}[thm]{Lemma}
\newtheorem{definition}[thm]{Definition}
\theoremstyle{remark}
\newtheorem{remark}[thm]{Remark}
\newtheorem*{remark*}{Remark}
\newcommand{\dd}{\mathrm{d}}
\newcommand{\R}{\ensuremath{\mathbb{R}}} 
\newcommand{\C}{\ensuremath{\mathbb{C}}} 
\newcommand{\N}{\ensuremath{\mathbb{N}}} 
\newcommand{\Z}{\ensuremath{\mathbb{Z}}} 
\newcommand{\sspan}[1]{\ensuremath{\langle #1\rangle}} 
\newcommand{\T}{\ensuremath{\mathbb{T}}}
\newcommand{\lap}{\ensuremath{\mathcal{L}}}
\newcommand{\principalV}{\ensuremath{\mathsf{PV}}}
\newcommand{\conv}{*} 
\DeclareMathOperator{\divergence}{div}
\DeclareMathOperator*{\esssup}{ess\,sup}
\newcommand{\conj}[1]{\overline{#1}}
\begin{document}
\title{Stability and bifurcation for the Kuramoto model}
\author{Helge Dietert}
\address{Department of Pure Mathematics and Mathematical Statistics\\
University of Cambridge\\
Wilberforce Road\\
Cambridge CB3 0WA, UK}
\email{H.G.W.Dietert@maths.cam.ac.uk}
\thanks{The author was supported by the UK Engineering and Physical
Sciences Research Council (EPSRC) grant EP/H023348/1 for the
University of Cambridge Centre for Doctoral Training, the Cambridge
Centre for Analysis.}

\begin{abstract}
  We study the mean-field limit of the Kuramoto model of globally
  coupled oscillators. By studying the evolution in Fourier space and
  understanding the domain of dependence, we show a global stability
  result. Moreover, we can identify function norms to show damping of
  the order parameter for velocity distributions and perturbations in
  $\mathcal{W}^{n,1}$ for $n > 1$. Finally, for sufficiently regular
  velocity distributions we can identify exponential decay in the
  stable case and otherwise identify finitely many eigenmodes. For
  these eigenmodes we can show a center-unstable manifold reduction,
  which gives a rigorous tool to obtain the bifurcation behaviour. The
  damping is similar to Landau damping for the Vlasov equation.
\end{abstract}
\maketitle

{\small
  \noindent
  \textit{Mathematics Subject Classification:}
  35Q83, 
  35Q92, 
  35B32, 
  35B40, 
  82C27, 
  92B25, 
  37N25. 

  \vspace{1em}\noindent
  \textit{Keywords:} Kuramoto model; mean-field limit; Landau damping;
  nonlinear stability; center manifold reduction; bifurcation
}

\tableofcontents

\section{Introduction}

\subsection{Overview}
The Kuramoto model for globally coupled oscillators is a prototype
for synchronisation behaviour. Introduced 40 years ago to model
chemical instabilities \cite{kuramoto1975self,kuramoto1984chemical},
it has found many other applications, see
\cite{strogatz2000from,acebron2005synchronization}.

The finite model consists of $N$ oscillators with natural frequencies
$(\omega_i)_{i=1}^{N}$ whose phase $(\theta_i)_{i=1}^{N}$ evolves as
\begin{align*}
  \partial_t \theta_i(t)
  &= \omega_i
  + \frac{K}{N} \sum_{j=1}^{N} \sin(\theta_j(t)-\theta_i(t)) \\
  &= \omega_i + \frac{K}{2i}
  \left(\eta(t)\, e^{-i\theta_i(t)} - \conj{\eta(t)}\, e^{i\theta_i(t)}\right),
\end{align*}
for $i=1,\dots ,N$, where $K$ is the coupling constant and
\begin{equation*}
  \eta(t) = \frac{1}{N} \sum_{j=1}^{N} e^{i\theta_j(t)}
\end{equation*}
is the order parameter and the main observable of the system.

This has the structure of a kinetic particle system where the phase
angle $\theta$ corresponds to the spatial position and the natural
frequency $\omega$ to the velocity. Following this structure, we will
use the terminology of spatial and velocity distribution.

We study the mean-field limit for $N \to \infty$, cf.
\cite{strogatz1991stability,lancellotti2005vlasov}. Main features of
the evolution were already identified in early studies
\cite{kuramoto1984chemical,kuramoto1975self} and
\cite{strogatz2000from} contains a more precise review. The spatially
homogeneous distribution is a trivial stationary solution with
$\eta = 0$ and is also called the (fully) incoherent state. The system
then has a critical coupling constant $K_c$ so that for $K < K_c$ the
incoherent state appears to be stable and for larger $K$ the system
bifurcates to partially locked states.

Even though the order parameter suggests a stable behaviour for
$K < K_c$, the linearised evolution operator in normal physical
function spaces has a continuous spectrum along the imaginary axis and
thus does not show decay directly
\cite{strogatz1991stability}. Already in 1992 it was realised that the
behaviour is similar to Landau damping in plasma physics
\cite{strogatz1992coupled}.

Here our starting point is to study the model in Fourier variables for
the phase $\theta$ and the frequency $\omega$. By understanding the
domain of dependence, we can prove a global stability result: There
exists $K_{ec}$ such that for couplings $K < K_{ec}$ the evolution is
controlled by
\begin{equation*}
  \int_0^{\infty} |\eta(t)|^2 \dd t < \infty.
\end{equation*}
For general velocity distributions $K_{ec} < K_c$, but for the
Lorentzian and Gaussian distribution $K_{ec} = K_{c}$. As far as the
author is aware, this is the first global result controlling the order
parameter.

For the local behaviour of the spatially homogeneous state, we first
note that the theory of the Volterra equation
\cite{gripenberg1990volterra} gives a rigorous stability result for
the linearised evolution of the order parameter $\eta$. In particular,
it shows exponential decay when the Fourier transform decays
exponentially and algebraic decay $t^{-n}$ when the Fourier transform
decays like $t^{-n}$. In particular the condition for algebraic decay
is satisfied if the velocity distribution and the perturbation are in
the Sobolev space $\mathcal{W}^{n,1}$.

By working in Fourier variables, we can now define norms for the
perturbation in which the linearised evolution has the same decays as
the order parameter. Moreover, for exponential decay or algebraic
decay $t^{-n}$ for $n > 1$, we are able to extend the linear stability
to nonlinear stability of perturbations. In particular, this shows
that the order parameter decays as predicted by the linear analysis.

In case the linear stability criterion fails and we have exponential
regularity, the linear analysis allows us to identify finitely many
eigenmodes. By constructing suitable norms we are now able to prove a
center-unstable manifold reduction of the dynamics. In particular,
this allows the rigorous derivation of the bifurcation and shows
stability of the partially locked states in the supercritical
bifurcation.

We stress that our results and used norms are independent of the
particular velocity distribution. Thus the stability result and the
center-unstable manifold reduction hold for all sufficiently regular
distributions. To the best of the author's knowledge this result was
unknown.

For Gaussian and rational velocity distributions a similar stability
and bifurcation result has been obtained by Chiba
\cite{chiba2013ergodic,chiba2011center}. The dynamics of the system
can also be reduced by using a symmetry reduction, the so called
Ott-Antonsen ansatz \cite{ott2011comment,ott2009long,ott2008low}. For
special rational velocity distribution ($(1+\omega^2)^{-1}$,
$(1+\omega^4)^{-1}$) \cite{ott2008low} and for the Bi-Cauchy
distribution \cite{martens2009exact} this has been used to obtain the
dynamics. In case of identical oscillators (i.e. the frequency
distribution is concentrated at one frequency), it has been proved
that the system will asymptotically be fully synchronised
\cite{ha2010complete}. This result has been extended to compactly
supported velocity distributions with sufficiently coherent initial
data and large enough coupling constant
\cite{carrillo2014contractivity}.

The nonlinear stability and bifurcation have been studied in the
modified system with white noise \cite{sakaguchi1988cooperative}. In
this case the mean-field limit becomes a Fokker-Planck equation and
the results have been obtained in the 1990s
\cite{strogatz1991stability,bonilla1992nonlinear,crawford1994amplitude}. The
concrete bifurcation behaviour for different velocity distributions
and different interactions has been studied in more detail
\cite{crawford1999synchronization,crawford1995scaling,acebron1998breaking,bonilla1998time},
see \cite{acebron2005synchronization} for a review.

The above results show that the Kuramoto model is an interesting model
to understand Landau damping where we can obtain more results than for
the classical Vlasov equation for plasmas
\cite{bedrossian2013landau,mouhot2011landau}. Other models are the 2d
Euler equation \cite{bedrossian2013inviscid} and the Vlasov-HMF model,
where recently stability has been proved with Sobolev norms
\cite{faou2014landau}.

\begin{remark*}
  After finishing the work, the author got aware of the independent
  work by Fernandez, Gérard-Varet, and Giacomin
  \cite{fernandez2014landau}. This very interesting work shows the
  nonlinear stability for perturbations in the class $C^n$ ($n \ge 4$)
  and follows more closely \cite{faou2014landau}. After finishing a
  first draft another preprint \cite{benedetto2014exponential}
  appeared showing for analytic initial data exponential convergence
  to the incoherent state if the coupling constant $K$ is small
  enough.
\end{remark*}

\subsection{Strategy and detailed results}

For finitely many oscillators with phase angles $(\theta_i)_{i=1}^{N}$
and frequencies $(\omega_i)_{i=1}^{N}$ we can look at the empirical
measure
$\rho_{N,t}(\theta,\omega) = N^{-1} \sum_{i=1}^{N}
\delta_{\theta=\theta_i(t)} \delta_{\omega=\omega_i(t)}$
in the phase space $\Gamma = \T \times \R$, where $\T$ is the torus
$[0,2\pi)$ for the phase angles. The set of probability measures on a
space $X$ is denoted by $\mathcal{M}(X)$ and in the mean-field limit
$N \to \infty$ we suppose that the initial distribution converges
weakly to a probability distribution
$\rho_{in} \in \mathcal{M}(\Gamma)$. Then for all later times
$t \in \R^+$ the distribution converges weakly to a probability
distribution $\rho_t$ and satisfies a PDE in a weak sense. For the
limiting case $N \to \infty$ we can thus study the family $\rho_t$
whose evolution is uniquely determined by the PDE. Given initial data
$\rho_{in}$, we call this family $\rho_{t}$ the solution to the
Kuramoto equation or the mean-field solution.

Assuming a density function $f(t,\cdot,\cdot)$ of $\rho_t$, the system
evolves formally as
\begin{equation*}
  \begin{cases}
    \partial_t f(t,\theta,\omega) + \partial_\theta
    \left[ \left(\omega + \frac{K}{2i}
        (\eta(t)\, e^{-i\theta} - \conj{\eta(t)}\, e^{i\theta})
        \right) f(t,\theta,\omega) \right] = 0, \\
    \eta(t) = \int_{\theta=0}^{2\pi} e^{i\theta} \int_{\R}
    f(t,\theta,\omega) \dd \omega \dd \theta.
  \end{cases}
\end{equation*}
In the general case, we rigorously understand this PDE in a weak sense
for measures. As a Cauchy problem, we will always assume that the
initial data $\rho_{in}$ are given at time $t=0$ and then the
solutions $\rho_{\cdot}$ will be defined for all $t \in \R^+$.

The velocity distribution $g$ is a probability distribution
$\mathcal{M}(\R)$ which is constant in time, so that
$g(A) = \rho_t(\T \times A)$ holds for all $A \in \mathcal{B}(\R)$ and
time $t$, where $\mathcal{B}(\R)$ denotes the Borel sets of $\R$.

Working formally with a density $f$, the velocity distribution has a
density $g$ and we can take the Fourier series for the spatial modes
\begin{equation*}
  \hat{f}(t,l,\omega) = \int_{\theta\in\T} e^{il\theta} f(t,\theta,\omega)
  \dd \theta
\end{equation*}
for $l\in\Z$. These then evolve as
\begin{equation*}
  \begin{cases}
    \partial_t \hat{f}(t,l,\omega) = i l \omega\, \hat{f}(t,l,\omega)
    + \frac{Kl}{2} \left[\eta(t)\, \hat{f}(t,l-1,\omega)
      - \conj{\eta(t)}\, \hat{f}(t,l+1,\omega) \right], \\
    \eta(t) = \int_{\R} \hat{f}(t,1,\omega) \dd \omega.
  \end{cases}
\end{equation*}

Since $f$ is non-negative and its marginal $g(\omega)
=\int_{\theta\in\T} f(t,\theta,\omega) \dd \theta$ is constant in
time, we find for all time $t$
\begin{equation*}
  \hat{f}(t,0,\omega) = g(\omega)
  \text{ and } |\hat{f}(t,l,\omega)| \le g(\omega)
  \text{ for all $l\in\Z$}.
\end{equation*}

Moreover, since $f$ is real,
$\hat{f}(l,\omega) = \conj{\hat{f}(-l,\omega)}$ so that it suffices to
consider $l\in\N$.

The spatially homogeneous distribution is characterised by vanishing
non-zero moments, so that the stability of the spatially homogeneous
distribution is equivalent to the decay of $\hat{f}$ restricted to
$l \ge 1$. The evolution of the restriction satisfies
\begin{equation*}
  \begin{cases}
    \partial_t \hat{f}(t,1,\omega) = i \omega\, \hat{f}(t,1,\omega) + \frac{K}{2}
    \left[\eta(t)\, g(\omega) - \conj{\eta(t)}\, \hat{f}(t,2,\omega) \right] \\
    \partial_t \hat{f}(t,l,\omega) = i l \omega\, \hat{f}(t,l,\omega) + \frac{Kl}{2}
    \left[\eta(t)\, \hat{f}(t,l-1,\omega) - \conj{\eta(t)}\, \hat{f}(t,l+1,\omega)
    \right] \text{ for $l\ge 2$} \\
    \eta(t) = \int_{\R} \hat{f}(t,1,\omega) \dd \omega.
  \end{cases}
\end{equation*}

Exploiting that the non-linear interaction is skew-Hermitian, we find
\begin{equation*}
  \partial_t \int_{\R} \sum_{l=1}^{\infty} \frac{1}{l}
  |\hat{f}(t,l,\omega)|^2 g^{-1}(\omega) \dd \omega = K |\eta(t)|^2.
\end{equation*}
With $K>0$ the system therefore is not dissipative and the convergence
to zero must happen in a weaker sense. Moreover, it shows
that the model is not time-reversible, as $K$ needs to change sign as
well. Finally, for negative $K$ this already shows global stability as
then $\int |\eta(t)|^2 \dd t$ is bounded. Hence the interesting case
is $K \ge 0$ which we assume in the following.

In order to understand the evolution of the spatial modes $l \ge 1$,
we also Fourier transform the velocity variable $\omega$ as
\begin{equation}
  \label{eq:fourier-u}
  u(t,l,\xi) = \int_{(\theta,\omega)\in\Gamma} e^{il\theta}
  e^{i\xi\omega}
  \rho_t(\dd \theta,\dd \omega)
\end{equation}
and
\begin{equation}
  \label{eq:fourier-g}
  \hat{g}(\xi) = \int_{\omega \in \R} e^{i\xi\omega} g(\dd \omega).
\end{equation}
The difference to the spatially homogeneous state is again captured by
the restriction to $l \ge 1$. Hence the evolution of
$u(t,\cdot,\cdot) \in C(\N \times \R)$ describes the evolution of the
perturbation and is determined by
\begin{equation}
  \label{eq:evolution-double-fourier}
  \begin{cases}
    \partial_t u(t,1,\xi) = \partial_\xi u(t,1,\xi) +
    \frac{K}{2} \left[\eta(t)\, \hat{g}(\xi) - \conj{\eta(t)}\, u(t,2,\xi) \right], \\
    \partial_t u(t,l,\xi) = l\partial_\xi u(t,l,\xi) +
    \frac{Kl}{2} \left[\eta(t)\, u(t,l-1,\xi) - \conj{\eta(t)}\, u(t,l+1,\xi)
    \right] \text{ for $l\ge2$}, \\
    \eta(t) = u(t,1,0).
  \end{cases}
\end{equation}

In this formulation the free transport always moves
$\xi \mapsto u(t,l,\xi)$ to the left and the interaction at
$(t,l,\xi)$ is given by $u(t,1,0) = \eta(t)$ and $u(t,l\pm 1,\xi)$.
Hence with $M \ge 0$, the region $\N \times \{ \xi : \xi \ge -M \}$ is
its own domain of dependence. This already suggests to use norms
focusing on $\xi \ge 0$ in order to show convergence.

For the global energy estimate, we note that the interaction is
skew-Hermitian except for the first mode. Thus we consider the energy
functional
\begin{equation}
  \label{eq:energy-functional}
  I(t) = \int_{\xi=0}^{\infty} \sum_{l\ge 1} \frac{1}{l}
  |u(t,l,\xi)|^2 \phi(\xi) \dd \xi,
\end{equation}
where $\phi$ is an increasing weight. By balancing the gain of the
first linear interaction term with the decay by the increasing weight,
we can show our first main result.
\begin{thm}
  \label{thm:energy-stability}
  For a velocity distribution $g \in \mathcal{M}(\R)$ suppose
  $\int_0^\infty |\hat{g}(\xi)| \dd \xi < \infty$ and let
  \begin{equation*}
    K_{ec} = \frac{2}{\int_{\xi=0}^{\infty} |\hat{g}(\xi)| \dd \xi}.
  \end{equation*}
  If the coupling constant $K$ satisfies $K < K_{ec}$, then there
  exists a finite constant $c > 0$ and a bounded increasing weight
  $\phi \in C^1(\R^+)$ with $\phi(0) = 1$ so that for a solution to
  the Kuramoto equation with velocity marginal $g$ the energy
  functional $I(t)$ defined by \Cref{eq:energy-functional} satisfies
  \begin{equation*}
    I(t) + c \int_0^t |\eta(s)|^2 \dd s
    \le I(0)
  \end{equation*}
  for all $t \in \R^+$.  In particular this shows that $I$ is
  non-increasing and the order parameter $\eta$ satisfies
  \begin{equation*}
    \int_0^\infty |\eta(s)|^2 \dd s \le
    c^{-1} I(0) < \infty.
  \end{equation*}
  If, moreover, the initial distribution $\rho_{in}$ has a density
  $f_{in} \in L^2(\Gamma)$, then
  $I(0) \le \| \phi \|_{\infty} \| f_{in} \|_2^2$.
\end{thm}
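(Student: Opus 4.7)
The plan is to differentiate $I(t)$ along \eqref{eq:evolution-double-fourier}, exploit the skew-Hermitian structure of the nonlinear coupling between neighbouring modes, and integrate by parts in $\xi$ to produce an outflow term at $\xi=0$ together with a dissipative term weighted by $\phi'$; these are designed so as to absorb the only surviving source, which is the $\hat g$ contribution in the equation for $u(t,1,\cdot)$.

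First I compute $\partial_t |u(t,l,\xi)|^2$ from \eqref{eq:evolution-double-fourier} and form $\sum_{l\ge 1}\tfrac{1}{l}\partial_t|u(t,l,\xi)|^2$. The factor $1/l$ cancels the $l$ in front of $\partial_\xi$, leaving $\sum_{l\ge 1}\partial_\xi|u(l,\xi)|^2$, and it makes the cross terms between consecutive modes telescope: using $\Re[\conj{a}\conj{e}b]=\Re[\conj{b}ea]$ for any $a,b,e\in\C$, the term $-K\Re[\conj{u(l)}\conj{\eta}u(l+1)]$ at index $l$ cancels $+K\Re[\conj{u(l+1)}\eta u(l)]$ at index $l+1$. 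The only untelescoped nonlinear term is the source $K\Re[\conj{u(1,\xi)}\eta(t)\hat g(\xi)]$ from the $l=1$ equation. Multiplying by $\phi(\xi)$, integrating $\xi$ over $(0,\infty)$, and integrating by parts then gives
\begin{equation*}
  \partial_t I(t)
  \le -\phi(0)|\eta(t)|^2 - \int_0^\infty \phi'(\xi)|u(t,1,\xi)|^2\,\dd\xi
  + K|\eta(t)|\int_0^\infty \phi(\xi)|\hat g(\xi)||u(t,1,\xi)|\,\dd\xi,
\end{equation*}
after dropping the manifestly non-positive terms $-\phi(0)\sum_{l\ge 2}|u(l,0)|^2$ and $-\int\phi'\sum_{l\ge 2}|u(l)|^2\,\dd\xi$, and assuming (see below) that the boundary contribution at $\xi=+\infty$ vanishes.

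The decisive step is to choose
\begin{equation*}
  \phi(\xi) = \Bigl(1 - \tfrac{K}{2}\int_0^\xi |\hat g(s)|\,\dd s\Bigr)^{-1},
\end{equation*}
which is $C^1$, increasing, has $\phi(0)=1$, is bounded by $(1-K/K_{ec})^{-1}$ precisely because $K<K_{ec}$, and satisfies the Riccati identity $\phi'=\tfrac{K}{2}|\hat g|\phi^2$. Consequently $\int_0^\infty \phi^2|\hat g|^2/\phi'\,\dd\xi = \tfrac{2}{K}\int_0^\infty|\hat g|\,\dd\xi = \tfrac{4}{K K_{ec}}$. Cauchy--Schwarz then gives
\begin{equation*}
  K|\eta|\int_0^\infty \phi|\hat g||u(1)|\,\dd\xi
  \le K|\eta|\sqrt{\tfrac{4}{K K_{ec}}}\,\sqrt{\int_0^\infty \phi'|u(1)|^2\,\dd\xi},
\end{equation*}
and a simple AM--GM bounds this by $\tfrac{K}{K_{ec}}|\eta|^2 + \int_0^\infty \phi'|u(1)|^2\,\dd\xi$. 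The dissipative term is thereby exactly absorbed, leaving $\partial_t I(t)\le -(1-K/K_{ec})|\eta(t)|^2$; integrating in time yields the claim with $c=1-K/K_{ec}$.

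The main obstacle is to justify the energy identity for a general $\rho_{in}\in\mathcal M(\Gamma)$: the boundary term $\phi(\infty)\sum_l|u(t,l,\xi)|^2$ at $\xi\to+\infty$ is a priori only non-negative rather than zero. I would resolve this by approximating $\rho_{in}$ with smooth densities whose $\omega$-Fourier transforms have adequate decay, for which the domain-of-dependence structure already noted after \eqref{eq:evolution-double-fourier} keeps $u(t,l,\cdot)$ small at $\xi=+\infty$ for all $t\ge 0$; the integral inequality then passes to the limit. The final claim $I(0)\le\|\phi\|_\infty\|f_{in}\|_2^2$ is Parseval: after bounding $\phi\le\|\phi\|_\infty$ and using $\tfrac{1}{l}\le 1$, Parseval in $\omega$ turns $\int|u(0,l,\xi)|^2\,\dd\xi$ into $\int|\tilde f_{in}(l,\omega)|^2\,\dd\omega$ with $\tilde f_{in}(l,\omega)=\int e^{il\theta}f_{in}(\theta,\omega)\,\dd\theta$, and summing in $l$ with Parseval in $\theta$ reproduces $\|f_{in}\|_2^2$ up to Fourier normalisation.
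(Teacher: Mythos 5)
Your formal computation is exactly the paper's: the paper proves the same differential inequality in \Cref{thm:energy-differentiation} with the constant $\alpha=\frac{K^2}{4}\int_0^\infty\frac{|\hat g|^2}{\phi'}\phi^2\,\dd\xi$, and then picks $\phi(\xi)=A/(A-\int_0^\xi|\hat g|)$; your weight is the clean member $A=2/K$ of that family, giving $\alpha=K/K_{ec}$ and $c=1-K/K_{ec}$. Your Cauchy--Schwarz plus AM--GM is the same as the paper's completion of the square, and it has the small advantage of gracefully handling the set $\{\hat g=0\}$ where $\phi'$ vanishes (the paper instead perturbs the weight by adding $e^{-\bar\gamma\xi}$ to $|\hat g|$ so that $\phi'>0$ everywhere, which is why its final constant is slightly less explicit). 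So the analytic core is correct and, if anything, tidier than the printed version.

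The genuine gap is in the justification of the energy identity, which is where essentially all of the paper's work goes. You flag only the boundary term at $\xi=+\infty$, but there are two further obstructions for general $\rho_{in}\in\mathcal M(\Gamma)$: (i) $u$ is merely continuous, so ``compute $\partial_t|u(t,l,\xi)|^2$'' and the integration by parts in $\xi$ are not available; and (ii) the telescoping series $\sum_l K\Re[\conj{u(l)}\eta u(l-1)]-K\Re[\conj{u(l)}\conj{\eta}u(l+1)]$ has terms that are only $O(1)$ in $l$, so it is not absolutely convergent and the interchanges of $\sum_l$, $\int\dd\xi$ and $\partial_t$ that your cancellation relies on are not legitimate termwise. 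The paper resolves all three issues at once by mollifying $u$ in $\xi$ (using the Duhamel formula to get time-differentiability), inserting cutoffs $e^{-\epsilon l}e^{-\zeta\xi}$ into the functional, absorbing the resulting commutator errors by Gronwall, and removing the regularisation with $\zeta=\epsilon^2$, $\epsilon\to 0$. Your alternative --- approximating $\rho_{in}$ by smooth densities --- can be made to work, but as stated it does not close the argument: smoothing in $\omega$ alone gives decay in $\xi$ but not the rapid decay in $l$ needed for (ii), so the approximants must also be regularised in $\theta$; and you would still need to pass to the limit, i.e.\ show $I_n(0)\to I(0)$ and use lower semicontinuity of $I(t)$ and of $\int_0^t|\eta_n|^2$ under the weak stability of the mean-field flow. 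None of this is insurmountable, but it is the actual content of the proof and is missing from the proposal.
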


In order to quantify the decay and stability we introduce, for a
non-negative weight $\phi$, the weighted spaces $L^1(X,\phi)$ and
$L^\infty(X,\phi)$ for functions $f: X \mapsto \C$ with finite norm
\begin{equation*}
  \| f \|_{L^1(X,\phi)} = \int_{x \in X} |f(x)| \phi(x) \dd x
\end{equation*}
and
\begin{equation*}
  \| f \|_{L^\infty(X,\phi)} = \esssup_{x \in X} |f(x)| \phi(x),
\end{equation*}
respectively. As weights we will use $\exp_a$ defined as
$\exp_a(x) = e^{ax}$ and $p_{A,b}$ defined as $p_{A,b}(x) = (A+x)^b$
with the special case $p_b = p_{1,b}$.

In Fourier variables the linear evolution is determined by the PDE
\begin{equation}
  \label{eq:fourier-linear}
  \begin{cases}
    \partial_t u(t,1,\xi) = \partial_{\xi} u(t,1,\xi)
    + \frac{K}{2} u(t,1,0) \hat{g}(\xi), \\
    \partial_t u(t,l,\xi) = l\partial_\xi u(t,l,\xi) \text{ for $l\ge2$}.
  \end{cases}
\end{equation}
Here all modes decouple and only the first mode differs through the
interaction. We denote its evolution operator by $e^{tL}$,
i.e. $t,l,\xi \mapsto (e^{tL}u_{in})(l,\xi)$ is the weak solution to
\Cref{eq:fourier-linear} in $C(\R^+ \times \N \times \R)$ with initial
data $u_{in}$ given at time $t=0$. As in the case of the Vlasov
equation \cite{penrose1960electrostatic} the order parameter under the
linear evolution is given by a Volterra equation.
\begin{lemma}
  \label{thm:linear-evolution}
  Let $\hat{g} \in C(\R)$. Then the evolution operator $e^{tL}$ is
  well-defined from continuous initial data to the unique continuous
  solution. For $u_{in} \in C(\N \times \R)$ let
  $\nu(t) = (e^{tL}u_{in})(1,0)$. Then $\nu$ satisfies the Volterra
  equation
  \begin{equation}
    \label{eq:linear-order-volterra}
    \nu(t) + (k \conv \nu)(t) = u_{in}(1,t)
  \end{equation}
  with the convolution kernel
  \begin{equation}
    \label{eq:linear-order-kernel}
    k(t) = - \frac{K}{2} \hat{g}(t)
  \end{equation}
  and
  \begin{equation}
    \label{eq:linear-order-first-mode}
    (e^{tL} u_{in})(1,\xi) = u_{in}(1,\xi + t) + \int_0^t \frac{K}{2}
    \nu(s) \hat{g}(\xi+t-s) \dd s
  \end{equation}
  and for $l \ge 2$
  \begin{equation}
    \label{eq:linear-order-higher-modes}
    (e^{tL} u_{in})(l,\xi) = u_{in}(l,\xi + tl).
  \end{equation}
\end{lemma}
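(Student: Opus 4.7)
The plan is to solve the decoupled linear system \eqref{eq:fourier-linear} explicitly by the method of characteristics, closing a Volterra equation on $\nu(t)=u(t,1,0)$ at the end.

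First I would handle the modes $l\ge2$, where the equation is pure transport $\partial_t u(t,l,\xi)=l\partial_\xi u(t,l,\xi)$ with no coupling. Along the characteristics $\xi+lt=\const$ the function is constant, so the unique continuous weak solution is $u(t,l,\xi)=u_{in}(l,\xi+lt)$, giving \eqref{eq:linear-order-higher-modes} immediately. This also shows that for $l\ge2$ the value at $\xi=0$ is entirely determined by the initial data and does not enter the dynamics of $\nu$.

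For $l=1$, I would treat $\nu(t):=u(t,1,0)$ momentarily as a prescribed continuous source and apply Duhamel's formula to the transport equation $\partial_t u(t,1,\xi)-\partial_\xi u(t,1,\xi)=\tfrac{K}{2}\nu(t)\hat g(\xi)$. Integrating the inhomogeneous term along the characteristic $s\mapsto(s,\xi+t-s)$ running from $s=0$ to $s=t$ yields
\begin{equation*}
  u(t,1,\xi)=u_{in}(1,\xi+t)+\int_0^t \frac{K}{2}\,\nu(s)\,\hat g(\xi+t-s)\,\dd s,
\end{equation*}
which is precisely \eqref{eq:linear-order-first-mode}. Evaluating at $\xi=0$ gives the self-consistency relation
\begin{equation*}
  \nu(t)=u_{in}(1,t)+\int_0^t \frac{K}{2}\,\nu(s)\,\hat g(t-s)\,\dd s,
\end{equation*}
which, with $k(t)=-\tfrac{K}{2}\hat g(t)$, is exactly the Volterra equation \eqref{eq:linear-order-volterra}.

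It then remains to show that this Volterra equation has a unique continuous solution $\nu$ on $\R^+$, and that the resulting $u$ is continuous on $\R^+\times\N\times\R$. Since $\hat g\in C(\R)$ and $t\mapsto u_{in}(1,t)$ is continuous, standard Picard iteration on any compact interval $[0,T]$ using the sup-norm and the bound $\int_0^t |k(t-s)|\,\dd s\le T\cdot\tfrac{K}{2}\|\hat g\|_{L^\infty([0,T])}$ gives a unique continuous fixed point; extending $T$ yields a unique $\nu\in C(\R^+)$. The resolvent/contraction step is entirely routine, and in fact Theorem 2.3.5 of \cite{gripenberg1990volterra} could be cited directly. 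Feeding this $\nu$ back into the Duhamel formula produces a continuous solution on each mode, and uniqueness at the PDE level follows because any continuous weak solution must, by the same characteristic argument, satisfy the same Duhamel identity and hence the same Volterra equation for $\nu$. The main obstacle, if any, is simply verifying that weak solutions in $C(\R^+\times\N\times\R)$ are indeed characterised by the integrated characteristic identities, but for the linear transport-with-source structure of \eqref{eq:fourier-linear} this follows from the usual density/mollification argument.
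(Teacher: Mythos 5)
Your proposal is correct and follows essentially the same route as the paper: solve the modes $l\ge 2$ by pure transport, write the first mode via Duhamel's formula along characteristics, evaluate at $\xi=0$ to close the Volterra equation for $\nu$, and invoke the standard existence, uniqueness and continuity theory for Volterra equations (Chapter 2 of Gripenberg et al.) to conclude. The only difference is cosmetic: you spell out the Picard iteration that the paper delegates entirely to the cited reference.
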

Here the convolution $f \conv g$ between two functions $f,g \in
L^1_{loc}(\R^+)$ is defined for $t \in \R^+$ by $(f \conv g)(t) =
\int_0^t f(t-s) g(s) \dd s$.

The solution to the Volterra equation is given by the resolvent kernel
$r \in L^1_{loc}(\R)$ as
\begin{equation*}
  \nu(t) = u_{in}(1,t) - (r \conv u_{in}(1,\cdot))(t).
\end{equation*}

Hence the order parameter $\nu$ under the linear evolution can decay
if $u_{in}(1,\xi)$ is decaying as $\xi$ increases. We can express this
for $a \ge 0$ as
\begin{equation*}
  \| \nu \|_{L^\infty(\R^+,\exp_a)}
  \le (1 + \| r \|_{L^1(\R^+,\exp_a)})
  \| u_{in}(1,\cdot) \|_{L^\infty(\R^+,\exp_a)}
\end{equation*}
and for $A \ge 1$ and $b \ge 0$ as
\begin{equation*}
  \| \nu \|_{L^\infty(\R^+,p_{A,b})}
  \le (1 + \| r \|_{L^1(\R^+,p_{b})})
  \| u_{in}(1,\cdot) \|_{L^\infty(\R^+,p_{A,b})}.
\end{equation*}

With the same decay in $\hat{g}$ we have a sharp criterion for the
resolvent kernel by the Paley-Wiener theorem for Volterra equations
\cite[Chapter 2, Thm 4.1]{gripenberg1990volterra} and Gel'fand's
theorem \cite[Chapter 4, Thm 4.3]{gripenberg1990volterra}.
\begin{thm}
  \label{thm:order-parameter-stability}
  If for $a \in \R$ we have $\hat{g} \in L^1(\R,\exp_a)$, then
  $r \in L^1(\R, \exp_a)$ is equivalent to $(\lap k)(z) \not = -1$ for
  all $z \in \C$ with $\Re z \ge -a$.

  If for $b \ge 0$ we have $\hat{g} \in L^1(\R,p_{b})$, then
  $r \in L^1(\R, p_b)$ is equivalent to $(\lap k)(z) \not = -1$ for
  all $z \in \C$ with $\Re z \ge 0$.
\end{thm}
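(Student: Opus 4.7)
The plan is to derive both statements as direct applications of the Gripenberg--Londen--Staffans results cited in the statement, after recasting them in the right weighted convolution algebra. Throughout, recall that the resolvent $r$ is by definition the unique locally integrable solution of $r + k \conv r = k$, and that $k(t) = -\frac{K}{2}\hat{g}(t)$, so that any weighted $L^1$ assumption on $\hat g$ transfers immediately to $k$.

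First I would treat the exponential case. The weight $\exp_a$ is multiplicative on $\R^+$, so $L^1(\R^+,\exp_a)$ is a commutative Banach algebra under convolution (with unit adjoined). Its maximal ideal space is parametrised by the closed half-plane $\{\Re z \ge -a\}$ through the Laplace transform $k \mapsto (\lap k)(z)$. The Paley--Wiener theorem for Volterra equations \cite[Chapter 2, Thm 4.1]{gripenberg1990volterra} then states exactly that the resolvent $r$ lies in $L^1(\R^+,\exp_a)$ if and only if $1+\lap k$ is invertible in this algebra, which by Gel'fand theory is the non-vanishing condition $(\lap k)(z)\ne -1$ for every $z$ with $\Re z \ge -a$. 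This settles the first claim.

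For the polynomial weight $p_b$ with $b \ge 0$, I would first verify that $p_b$ is submultiplicative, using $(1+s+t)^b \le (1+s)^b(1+t)^b$, so that $L^1(\R^+,p_b)$ is again a commutative Banach convolution algebra. In contrast with the exponential case this weight satisfies $\lim_{t\to\infty} p_b(t)^{1/t} = 1$, so it is non-quasianalytic and its Gel'fand spectrum reduces to the closed right half-plane $\{\Re z \ge 0\}$, with the Gel'fand transform agreeing with the Laplace transform there. Applying Gel'fand's theorem \cite[Chapter 4, Thm 4.3]{gripenberg1990volterra} to $1+\lap k$ then yields the equivalence $r \in L^1(\R^+,p_b) \iff (\lap k)(z)\ne -1$ for all $\Re z \ge 0$.

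The only real work is therefore to match the abstract admissible-weight framework of \cite{gripenberg1990volterra} with the two concrete weights used here: checking submultiplicativity, and in the algebraic case checking that the asymptotic growth condition pins the spectrum to the closed right half-plane. The rest is citation, so I would expect the proof in the paper to be essentially a one-paragraph verification of these hypotheses followed by the appeal to the two cited theorems.
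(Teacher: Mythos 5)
Your proposal is correct and follows the same route as the paper: the paper's proof is likewise a direct appeal to the Paley--Wiener theorem (Lemma 3.4 and Theorem 4.1 of Chapter 2 of \cite{gripenberg1990volterra}) for the exponential weight and to Gel'fand's theorem (Theorem 4.3 of Chapter 4) for the algebraic weight, with the only verification being the submultiplicativity of $p_b$. Your additional remarks on the maximal ideal space and non-quasianalyticity are exactly the hypotheses those cited theorems encapsulate.
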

Here $\lap k$ denotes the Laplace transform of $k$, i.e.
\begin{equation*}
  (\lap k)(z) = \int_0^\infty k(t) e^{-tz} \dd t
  = - \frac{K}{2} \int_0^\infty \hat{g}(t) e^{-tz} \dd t.
\end{equation*}
Hence the linear evolution of the order parameter is stable if
$(\lap k)(z) \not = -1$ for all $z \in \C$ with $\Re z \ge 0$. By
bounding the absolute value, we see that if $K < K_{ec}$ the system is
linearly stable. In case of the Gaussian or the Cauchy distribution,
$\hat{g}$ is always positive so that $K_{ec}$ equals the critical
coupling.

Like in the Vlasov equation \cite{penrose1960electrostatic}, this
condition can be related to a Penrose criterion for the complex
boundary, which also visualises how often the condition fails. Under
this we can also relate the stability to the known condition
\cite{strogatz1991stability,strogatz1992coupled}.

For small perturbations, our second main result is the nonlinear
stability of the incoherent state. In a first version we propagate
control in
\begin{equation*}
  \sup_{l \ge 1} \sup_{\xi \in \R} |u(t,l,\xi)| e^{a(\xi + tl/2)}.
\end{equation*}
\begin{thm}
  \label{thm:exponential-stability}
  Let $g \in \mathcal{M}(\R)$ and $a > 0$ such that $\hat{g}$
  satisfies the exponential stability condition of rate $a$ in
  \Cref{thm:order-parameter-stability}, i.e.
  $\hat{g} \in L^1(\R^+,\exp_a)$ and $(\lap k)(z) \not = -1$ for all
  $z \in \C$ with $\Re z \ge -a$. Then there exists a $\delta > 0$ and
  finite $c_1$ such that for initial data
  $\rho_{in} \in \mathcal{M}(\Gamma)$ with velocity marginal $g$,
  Fourier transform $u_{in}$ and
  $M_{in} := \sup_{l \in \N} \sup_{\xi \in \R} |u_{in}(l,\xi)|
  e^{a\xi} \le \delta$
  the Fourier transform $u$ of the solution to the Kuramoto equation
  with initial data $\rho_{in}$ satisfies
  \begin{equation*}
    \sup_{t \in \R^+} \sup_{l \ge 1} \sup_{\xi \in \R} |u(t,l,\xi)| e^{a(\xi+tl/2)}
    \le (1+c_1) M_{in}.
  \end{equation*}
  In particular, this shows that the order parameter $\eta(t) =
  u(t,1,0)$ decays as $O(e^{-a t})$.
\end{thm}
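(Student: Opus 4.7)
The plan is to run a bootstrap in the quantity
\[
  N(T) := \sup_{t \in [0,T]} \sup_{l \ge 1} \sup_{\xi \in \R} |u(t,l,\xi)|\, e^{a(\xi+tl/2)},
\]
which is continuous in $T$ by a standard local well-posedness argument (if necessary, by first smoothing the initial measure to a density and then passing to the limit). I would show that, assuming $N(T) \le 2 C_1 M_{in}$, one in fact has an estimate of the form $N(T) \le C_1 M_{in} + C_2 N(T)^2$, so that for $M_{in} \le \delta$ small enough the inequality improves strictly and the bootstrap propagates to all $T \ge 0$.

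\textbf{Estimate on the first mode.} I would keep the self-interaction $\tfrac{K}{2}\eta(t)\hat g(\xi)$ inside the linearised operator $e^{tL_1}$ of \Cref{thm:linear-evolution} and write Duhamel as
\[
  u(t,1,\xi) = (e^{tL_1} u_{in}(1,\cdot))(\xi) - \int_0^t \bigl(e^{(t-s)L_1}\bigl[\tfrac{K}{2}\conj{\eta(s)}\, u(s,2,\cdot)\bigr]\bigr)(\xi)\,\dd s.
\]
The key linear bound I need is
\[
  \sup_{t\ge 0,\ \xi\in\R} |(e^{tL_1} h)(\xi)|\, e^{a(\xi+t/2)} \le C_0\, \|h\|_{L^\infty(\R,\exp_a)},
\]
which follows from the representation~\eqref{eq:linear-order-first-mode}: the transport part $h(\xi+t)$ contributes $\|h\|_{L^\infty(\exp_a)} e^{-at/2}$, while the $\hat g$-convolution is bounded using the Volterra resolvent estimate $|\nu(s)| \le (1+\|r\|_{L^1(\exp_a)})\|h\|_{L^\infty(\exp_a)} e^{-as}$ from \Cref{thm:order-parameter-stability}, together with the finiteness of $\int_\xi^{\xi+t} e^{a\sigma}|\hat g(\sigma)|\,\dd\sigma$ (controlled on $\R^+$ by $\hat g \in L^1(\R^+,\exp_a)$ and on $\R^-$ by $|\hat g|\le 1$). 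Applied to the Duhamel source, whose weighted sup-norm is $\lesssim N(T)^2 e^{-3as/2}$, this produces an $O(N(T)^2)$ contribution.

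\textbf{Estimate on higher modes.} For $l \ge 2$ the linear flow is pure transport, and Duhamel along the characteristics $\xi \mapsto \xi - lt$ gives
\[
  u(t,l,\xi) = u_{in}(l,\xi+lt) + \int_0^t \tfrac{Kl}{2}\bigl[\eta(s)\, u(s,l-1,\xi+l(t-s)) - \conj{\eta(s)}\, u(s,l+1,\xi+l(t-s))\bigr]\,\dd s.
\]
Substituting $|\eta(s)|\le N(T) e^{-as/2}$ and $|u(s,l\pm 1,\xi')|\le N(T) e^{-a(\xi'+s(l\pm1)/2)}$ and multiplying through by $e^{a(\xi+tl/2)}$, the integrand of the $l-1$ contribution reduces to $\tfrac{Kl}{2} N(T)^2 e^{-al(t-s)/2}$, and that of the $l+1$ term to $\tfrac{Kl}{2} N(T)^2 e^{-al(t-s)/2 - as}$. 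Integrating in $s$, the factor $l$ in the prefactor is cancelled by the $1/l$ produced by the time integral of $e^{-al(t-s)/2}$, so both contributions are bounded by $C K/a \cdot N(T)^2$ \emph{uniformly in $l$}. This uniformity in $l$ is the main technical point of the argument: it is only possible because the weight $e^{a(\xi+tl/2)}$ is set up to use exactly half of the free-transport decay, leaving a gain $e^{-al(t-s)/2}$ that precisely absorbs the $l$-amplification of the nonlinear coupling.

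\textbf{Closing the bootstrap.} Combining both estimates produces $N(T) \le C_1 M_{in} + C_2 N(T)^2$. If $\delta < 1/(4 C_1 C_2)$ and $M_{in}\le \delta$, the bootstrap assumption $N(T) \le 2 C_1 M_{in}$ then yields $N(T) \le C_1 M_{in}(1 + 4 C_1 C_2 M_{in}) < 2 C_1 M_{in}$, so by continuity of $N$ in $T$ the bound holds for all $T \ge 0$ with $c_1 := C_1(1+4C_1 C_2\delta)-1$. The sharper $O(e^{-at})$ decay of $\eta(t) = u(t,1,0)$ claimed at the end of the theorem is then a corollary: reinserting the global control into the Volterra equation for $\eta$ produces a source $F$ with $\|F\|_{L^\infty(\exp_a)} < \infty$, so that $\|\eta\|_{L^\infty(\exp_a)} \le (1+\|r\|_{L^1(\exp_a)})\|F\|_{L^\infty(\exp_a)}$ via \Cref{thm:order-parameter-stability}.
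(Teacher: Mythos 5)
Your mode-by-mode estimates are exactly those of the paper: the first mode is handled through the Volterra resolvent bound of \Cref{thm:linear-stability-first-mode}, the higher modes through pure transport, and the crucial cancellation of the factor $l$ in the nonlinearity against the $1/l$ coming from $\int_0^t e^{-al(t-s)/2}\,\dd s$ is precisely the paper's mechanism. The quadratic inequality and its closure are also the same. The difference is purely in how the estimate is turned into a statement about the actual mean-field solution: you run a continuity/bootstrap argument on $N(T)$ evaluated on the solution itself, whereas the paper constructs a fixed point of the Duhamel map $T$ of \Cref{thm:duhamel-linear} in the ball $\{\|u\|_e \le (1+c_1)M_{in}\}$ and then identifies it with the Fourier transform of the mean-field solution via the uniqueness theorem (\Cref{thm:fourier-uniqueness}).

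This difference hides a genuine gap. Your bootstrap requires that $N(T)$ be finite for every $T$ and continuous in $T$, and neither is available a priori. The only uniform information on the solution is $|u(t,l,\xi)|\le 1$, while the weight $e^{a(\xi+tl/2)}$ is unbounded both as $\xi\to+\infty$ and as $l\to\infty$ for any fixed $t>0$; the required decay $|u(t,l,\xi)|\lesssim e^{-a\xi-atl/2}$, uniformly in $l$, is exactly the conclusion of the theorem and cannot be assumed at the outset. An induction on $l$ does not close because mode $l$ is driven by mode $l+1$, and your suggested remedy of smoothing the initial measure does not help: smoothness in $(\theta,\omega)$ gives no uniform-in-$l$ exponential decay in $\xi$ at positive times, and in any case the hypothesis $M_{in}\le\delta$ already provides the decay at $t=0$ — the problem is propagating its finiteness, not producing it initially. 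The paper states this obstruction explicitly ("we do not know a priori if these norms stay finite") and this is precisely why it replaces the bootstrap by a Banach fixed-point construction inside the ball, followed by the identification with the mean-field solution through \Cref{thm:fourier-uniqueness} (whose hypotheses the Fourier transform of a measure solution does satisfy, since it only needs bounds with weights $e^{-\beta l}\min(1,e^{a\xi})$). Your argument becomes correct if you either adopt that route or run your estimates on the Picard iterates, for which the norm is finite by construction, and pass to the limit; as written, the bootstrap cannot get started.
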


For the algebraic decay we propagate control in
\begin{equation*}
  \sup_{l \ge 1} \sup_{\xi \in \R^+}
  |u(t,l,\xi)| \frac{(1+\xi+t)^b}{(1+t)^{\alpha(l-1)}}.
\end{equation*}
\begin{thm}
  \label{thm:algebraic-stability}
  Let $g \in \mathcal{M}(\R)$ and $b > 1$ such that $\hat{g}$
  satisfies the algebraic stability condition of parameter $b$ in
  \Cref{thm:order-parameter-stability}, i.e.
  $\hat{g} \in L^1(\R^+,p_b)$ and $(\lap k)(z) \not = -1$ for all
  $z \in \C$ with $\Re z \ge 0$. For $\alpha > 0$ satisfying
  $b > 1 + \alpha$, there exists $\delta > 0$ and finite $\gamma_1$
  such that for initial data $\rho_{in} \in \mathcal{M}(\Gamma)$ with
  velocity marginal $g$, Fourier transform $u_{in}$ and
  $M_{in} := \sup_{l \in \N} \sup_{\xi \in \R} |u_{in}(l,\xi)|
  (1+\xi)^b \le \delta$
  the Fourier transform $u$ of the solution to the Kuramoto equation
  with initial data $\rho_{in}$ satisfies
  \begin{equation*}
    \sup_{t \in \R^+} \sup_{l \ge 1} \sup_{\xi \in \R^+}
    |u(t,l,\xi)| \frac{(1+\xi+t)^b}{(1+t)^{\alpha(l-1)}}
    \le (1+\gamma_1) M_{in}.
  \end{equation*}
  In particular, this shows that the order parameter $\eta(t) =
  u(t,1,0)$ decays as $O(t^{-b})$.
\end{thm}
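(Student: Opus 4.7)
The plan is to run a bootstrap argument paralleling \Cref{thm:exponential-stability} but with the polynomial weight $p_b$ in place of $\exp_a$. Define
\[
  N(T) := \sup_{t\in[0,T]}\sup_{l\ge 1}\sup_{\xi\in\R^+}
    |u(t,l,\xi)|\,\frac{(1+\xi+t)^b}{(1+t)^{\alpha(l-1)}};
\]
by continuity of $T\mapsto N(T)$ along the mild solution it suffices to prove the bootstrap implication ``$N(T)\le 2(1+\gamma_1)M_{in}$ implies $N(T)\le (1+\gamma_1)M_{in}$'' for a suitable $\gamma_1,\delta>0$. Integrating \Cref{eq:evolution-double-fourier} along the free-transport characteristics yields, for $l\ge 2$,
\[
  u(t,l,\xi) = u_{in}(l,\xi+tl) + \frac{Kl}{2}\int_0^t\!\bigl[\eta(s)\,u(s,l-1,\xi+l(t-s)) - \conj{\eta(s)}\,u(s,l+1,\xi+l(t-s))\bigr]\dd s,
\]
together with the analogous formula for $l=1$ in which $u(s,0,\cdot)$ is replaced by $\hat g$.

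First I would control the order parameter. Evaluating the $l=1$ formula at $\xi=0$ casts $\eta$ as the solution of the perturbed Volterra equation $\eta + k\conv\eta = F$ with forcing
\[
  F(t) = u_{in}(1,t) - \frac{K}{2}\int_0^t \conj{\eta(s)}\,u(s,2,t-s)\,\dd s.
\]
Under the bootstrap, $(1+s)^b|\eta(s)|\le N(T)$ and $(1+t)^b(1+s)^{-\alpha}|u(s,2,t-s)|\le N(T)$ (using $1+(t-s)+s=1+t$), so the integrand is dominated by $N(T)^2(1+t)^{-b}(1+s)^{\alpha-b}$; the condition $b>1+\alpha$ makes the $s$-integral converge uniformly in $t$, giving $\|F\|_{L^\infty(\R^+,p_b)}\le M_{in}+CK\,N(T)^2$. \Cref{thm:order-parameter-stability} then yields
\[
  \|\eta\|_{L^\infty(\R^+,p_b)} \le \bigl(1+\|r\|_{L^1(\R^+,p_b)}\bigr)\bigl(M_{in}+CK\,N(T)^2\bigr).
\]

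For $l=1$ at generic $\xi\ge 0$ I would change variables $r=\xi+t-s$ in the $\eta\hat g$ integral and apply the convex splitting $(1+\xi+t)^b\le 2^{b-1}\bigl((1+r)^b+(1+\xi+t-r)^b\bigr)$: the first summand is controlled by $\|\hat g\|_{L^1(\R^+,p_b)}\|\eta\|_{L^\infty(\R^+,p_b)}$, the second by $\|\hat g\|_{L^1(\R^+)}\|\eta\|_{L^\infty(\R^+,p_b)}$, and the $\conj\eta\,u(\cdot,2,\cdot)$ piece is handled exactly as in the Volterra step. For $l\ge 2$ the transport term gives $M_{in}$ at once since $(1+\xi+tl)\ge(1+\xi+t)$ and $(1+t)^{-\alpha(l-1)}\le 1$. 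For the nonlinear pieces the geometric identity $1+\xi+l(t-s)+s=1+\xi+lt-(l-1)s\ge 1+\xi+t$ on $0\le s\le t$ pulls the full $(1+\xi+t)^{-b}$ decay out of the integrand, leaving integrals of the form $\int_0^t(1+s)^{\alpha(l-2)-b}\,\dd s$ (from the $\eta u(l-1)$ term) and $\int_0^t(1+s)^{\alpha l-b}\,\dd s$ (from the $\conj\eta u(l+1)$ term). Multiplied by $\frac{Kl}{2}(1+t)^{-\alpha(l-1)}$ and split according to the sign of the integrand's exponent, $b>1+\alpha$ collapses every residual $t$-power to $(1+t)^{-\alpha-b+1}\le 1$, and the prefactor $l/(\alpha l + O(1))$ stays bounded as $l\to\infty$.

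Collecting the three estimates gives an inequality of the form
\[
  N(T) \le M_{in} + C_\star K\bigl(M_{in}\,N(T) + K\,N(T)^3 + N(T)^2\bigr),
\]
with $C_\star$ depending only on $\alpha$, $b$, $\|r\|_{L^1(\R^+,p_b)}$ and $\|\hat g\|_{L^1(\R^+,p_b)}$. Fixing $\gamma_1$ large enough to absorb the Volterra constant and then $\delta$ small enough that the quadratic and cubic contributions evaluated at $N(T)=2(1+\gamma_1)M_{in}$ are each below $\gamma_1 M_{in}/4$ closes the bootstrap, and the $O(t^{-b})$ decay of $\eta$ is the $l=1,\,\xi=0$ case of the resulting uniform bound. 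I expect the main obstacle to be precisely the uniformity in $l$: the factor $Kl/2$ is only offset by the $\alpha l$ appearing in the denominator of the integrated weight in the regime $l\gg 1$, and the finitely many integer $l$ at which $\alpha(l-2)-b+1$ (or $\alpha l-b+1$) vanishes must be handled separately, using that $\ln(1+t)(1+t)^{-\alpha(l-1)}$ remains bounded whenever $\alpha(l-1)>0$.
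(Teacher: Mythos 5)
Your weighted estimates are correct and essentially coincide with the paper's: the same norm $\sup_{t,l,\xi}|u(t,l,\xi)|(1+\xi+t)^b(1+t)^{-\alpha(l-1)}$, the same use of $1+\xi+l(t-s)+s\ge 1+\xi+t$, the same splitting of the $l$-dependent integrals according to the sign of $\alpha l-b+1$ with the logarithmic boundary case absorbed by $(1+t)^{-\alpha(l-1)}$, and the same use of $b>1+\alpha$ to make $\int_0^t(1+s)^{\alpha-b}\,\dd s$ converge. The only organisational difference in the estimates is that you apply Duhamel to the free transport and run the Volterra equation for $\eta$ by hand, whereas the paper applies Duhamel to the full linear evolution $e^{tL}$ (\Cref{thm:duhamel-linear}) so that the resolvent-kernel bound is already packaged into the constant $\gamma_1$ of \Cref{thm:linear-stability-first-mode}; this is cosmetic.

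The genuine gap is in your bootstrap framework. You assert that $T\mapsto N(T)$ is continuous ``along the mild solution'' and finite, but neither is available a priori: the weight $(1+\xi+t)^b$ is unbounded in $\xi$, so the universal bound $|u|\le 1$ gives no control, and $N(T)$ is a supremum over infinitely many modes $l$, so even pointwise continuity of $u$ in $t$ does not yield the quantitative right-continuity needed to propagate the open condition $N(T)\le 2(1+\gamma_1)M_{in}$ past time $T$. Showing that the decay in $\xi$ persists for $t>0$ is essentially the estimate you are trying to prove, so the continuity-method setup is circular as written. The paper flags exactly this point (``we do not know a priori if these norms stay finite'') and avoids it by running your estimates not on the mean-field solution but on the Duhamel map $T$, proving $\|Tu\|_a\le \gamma_1 M_{in}+\gamma_2\|u\|_a^2$ and a Lipschitz bound, obtaining a fixed point in the ball $\{\|u\|_a\le(1+\gamma_1)M_{in}\}$ by Banach's theorem, and only then identifying this fixed point with the Fourier transform of the mean-field solution via the uniqueness statements (\Cref{thm:fourier-uniqueness} together with the restriction to $\xi\ge 0$ in \Cref{thm:fourier-restriction}). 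Your argument becomes complete if you either adopt this contraction-plus-uniqueness route, or first establish finiteness of $N(T)$ on short time intervals by a local-in-time fixed point in the weighted space before bootstrapping. (Minor additional remarks: the cubic and $M_{in}N(T)$ terms in your closing inequality are artefacts of inserting the Volterra bound for $\eta$ into the quadratic terms; bounding $|\eta(s)|$ directly by $N(T)(1+s)^{-b}$ there yields the cleaner $N\le \gamma_1 M_{in}+\gamma_2 N^2$ of the paper, though your version also closes.)
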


Note that the exponential and algebraic control are one-sided and thus
do not control the $L^2$ or Sobolev norm in the original space. This
explains why we can have decay with these norms, even though there is
no decay of perturbations in the $L^2$ norm and the linear system is
only neutrally stable \cite{strogatz1991stability}. For the
application, however, the Sobolev norm $\mathcal{W}^{b,1}$ controls
$M_{in}$ and $g \in \mathcal{W}^{b,1}$ ensures the required decay of
$\hat{g}$.

If the stability condition fails, we can identify growing modes in the
Volterra equation.
\begin{thm}
  \label{thm:linear-volterra-modes}
  For $a \in \R$ suppose $\hat{g} \in L^1(\R^+,\exp_a)$ and that the
  convolution kernel $k$ defined by \Cref{eq:linear-order-kernel}
  satisfies $(\lap k)(z) \not = -1$ for $z \in \C$ with $\Re z =
  -a$.
  Then the solution to the Volterra
  equation~\eqref{eq:linear-order-volterra} is given by
  \begin{equation*}
    \eta(t) = u_{in}(1,t) - \int_0^\infty q(t-s) u_{in}(1,s) \dd s
  \end{equation*}
  with
  \begin{equation*}
    q(t) = r_s(t) + \sum_{i=1}^{n} \sum_{j=0}^{p_i-1} b_{i,j} t^j
    e^{\lambda_i t}
  \end{equation*}
  where $r_s$ is vanishing for negative arguments and satisfies $\|
  r_s \|_{L^1(\R^+,\exp_a)} < \infty$ and $\lambda_1,\dots ,\lambda_n$
  are the finitely many zeros of $1+ (\lap k)(z)$ in $\Re z > -a$ with
  multiplicities $p_1,\dots ,p_n$ and $b_{i,j}$ depends on the
  residues of $[1+(\lap k)(z)]^{-1}$ at $\lambda_i$.
\end{thm}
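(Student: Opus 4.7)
By \Cref{thm:linear-evolution} and the standard Volterra theory, the solution is $\nu(t) = u_{in}(1,t) - (r \conv u_{in}(1,\cdot))(t)$, where $r \in L^1_{loc}(\R^+)$ is the unique resolvent kernel, characterised by $r + k \conv r = k$ and, on a suitable right half-plane, $(\lap r)(z) = (\lap k)(z)/(1 + (\lap k)(z))$. The plan is to split $r = r_s + \Phi$ on $[0,\infty)$ with $\Phi(t) = \sum_{i,j} b_{i,j} t^j e^{\lambda_i t}$ the unstable modes and $r_s \in L^1(\R^+, \exp_a)$ the stable remainder; extending both by zero for $t < 0$ then yields the kernel $q$ of the statement.

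First I would enumerate the unstable zeros. Since $\hat g \in L^1(\R^+, \exp_a)$, the Laplace transform $\lap k$ extends holomorphically to $\{\Re z > -a\}$ and continuously to $\{\Re z \ge -a\}$, and $(\lap k)(z) \to 0$ both as $\Re z \to \infty$ (dominated convergence) and as $|\Im z| \to \infty$ on the line $\Re z = -a$ (Riemann--Lebesgue applied to $\hat g(t) e^{at} \in L^1(\R^+)$). Combined with the assumption $(\lap k)(z) \ne -1$ on $\Re z = -a$, a compactness argument confines the zero set of $1 + \lap k$ in $\{\Re z \ge -a\}$ to a compact subset of the open half-plane, and by analyticity it reduces to finitely many points $\lambda_1, \dots, \lambda_n$ with finite multiplicities $p_1, \dots, p_n$. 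At each $\lambda_i$, the Laurent expansion of $[1 + \lap k]^{-1}$ determines the principal part $P_i(z) = \sum_{j=0}^{p_i-1} d_{i,j}/(z-\lambda_i)^{j+1}$ of $\lap r$; setting $b_{i,j} = d_{i,j}/j!$, an inverse Laplace transform recovers $\Phi_i(t) = \sum_j b_{i,j} t^j e^{\lambda_i t}$.

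The heart of the proof is verifying that $r_s := r - \sum_i \Phi_i$ lies in $L^1(\R^+, \exp_a)$. The idea is to realise $r_s$ as the resolvent of a modified Volterra kernel to which the Paley--Wiener criterion of \Cref{thm:order-parameter-stability} applies. Subtracting $\sum_i \Phi_i$ from $r + k \conv r = k$ gives $r_s + k \conv r_s = h$ with $h = k - \sum_i \Phi_i - k \conv \sum_i \Phi_i$, equivalently $(1 + \lap k) \lap r_s = \lap h$; by construction the right-hand side has a zero of order at least $p_i$ at each $\lambda_i$, exactly cancelling the corresponding zero of $1 + \lap k$. Hence $\lap r_s$ extends to a bounded holomorphic function on $\{\Re z > -a\}$, continuous up to the boundary and vanishing at infinity, and Gel'fand's theorem together with \Cref{thm:order-parameter-stability} applied to a suitable reformulation (for instance, setting $\lap k_s := \lap r_s/(1 - \lap r_s)$, valid since $\lap r_s \to 0$ at infinity) yields $r_s \in L^1(\R^+, \exp_a)$.

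The main obstacle is this last passage from analytic regularity of $\lap r_s$ on the closed half-plane to weighted $L^1$ integrability of $r_s$: boundedness of the Laplace transform alone is insufficient, so one must either reduce to a Volterra equation directly covered by \Cref{thm:order-parameter-stability} via the above substitution, or invoke the classical analysis of Gripenberg--Londen--Staffans to factor out the unstable poles. The algebra of cancellations around the $\lambda_i$ and the identification of $b_{i,j}$ from the residues of $[1 + \lap k]^{-1}$ are then routine inverse-Laplace computations, and the decomposition of $q$ claimed in the statement follows at once.
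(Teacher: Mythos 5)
Your first step --- analyticity of $\lap k$ on $\{\Re z>-a\}$, continuity up to the boundary, decay at infinity by Riemann--Lebesgue, and the boundary non-vanishing assumption forcing the zero set of $1+\lap k$ into a compact subset of the open half-plane, hence finitely many zeros of finite multiplicity --- is exactly the argument in the paper. At that point the paper simply invokes Theorem~2.1 of Chapter~7 of \cite{gripenberg1990volterra}, which is precisely the statement that the resolvent splits as a finite sum of modes $t^je^{\lambda_i t}$ (with coefficients determined by the principal parts of $[1+\lap k]^{-1}$) plus a remainder in the weighted $L^1$ space. Your plan is to reprove that theorem by hand, and the self-contained portion of your argument does not close.

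The concrete gap is the one you yourself flag: after subtracting the principal parts you know only that $\lap r_s$ is holomorphic and bounded on $\{\Re z\ge -a\}$, continuous up to the boundary and vanishing at infinity, and this does not imply $r_s\in L^1(\R^+,\exp_a)$. Neither of your two proposed repairs works as stated. The substitution $\lap k_s:=\lap r_s/(1-\lap r_s)$ is circular: to apply \Cref{thm:order-parameter-stability} you must first exhibit $k_s$ as an element of $L^1(\R^+,\exp_a)$, which is essentially the integrability you are trying to prove, and the relevant non-vanishing condition becomes $\lap r_s\ne 1$, which you have not verified. The alternative of treating $r_s+k\conv r_s=h$ as a Volterra equation is also not covered by \Cref{thm:order-parameter-stability}, because the symbol $1+\lap k$ of that equation still vanishes at the $\lambda_i$; the cancellation of those zeros by $\lap h$ is exactly the delicate point, and extracting integrability from it requires the half-plane factorisation argument (replace $1+\lap k$ by $(1+\lap k)\prod_i\bigl((z-\mu_i)/(z-\lambda_i)\bigr)^{p_i}$ with $\Re\mu_i<-a$ and apply the Paley--Wiener theorem to the resulting zero-free symbol). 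That factorisation is the content of the cited Theorem~2.1 of Chapter~7 of \cite{gripenberg1990volterra}; your final fallback of ``invoking the classical analysis of Gripenberg--Londen--Staffans'' is therefore not an optional shortcut but the actual proof, and is what the paper does in one line. With that citation in place of your hands-on attempt, the rest of your write-up (identification of the $b_{i,j}$ from the residues, extension of $r_s$ by zero to negative arguments) is correct.
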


This says that the linear evolution is governed by eigenmodes $t^j
e^{\lambda_i t}$ for $i=1,\dots ,n$ and $j=0,\dots ,p_{i}-1$ and a
remaining stable part $r_s$. The previous linear stability theorem is
included in this formulation as $r=r_s$.

If $\hat{g} \in L^1(\R^+,\exp_a)$ for $a > 0$, this shows that if the
stability condition fails, there exists an unstable mode. Without
extra assumption, for every $a < 0$ we have
$\hat{g} \in L^1(\R^+,\exp_a)$ because $g$ is a probability
distribution. Hence, unless we are at the critical case, we have
unstable modes if the stability condition fails. Therefore, the linear
stability condition is sharp.

The additional restriction $(\lap k)(z) \not = -1$ for $\Re z = -a$ is
very weak, because, if $\hat{g} \in L^1(\R^+,\exp_a)$, then $(\lap
k)(z)$ is analytic for $\Re z > -a$ and by the Riemann-Lebesgue lemma
vanishing as $|z| \to \infty$. Hence we can choose a smaller $a'$ such
that $|a-a'|$ is arbitrarily small and the theorem applies.

The eigenmodes of the Volterra equation can be related to eigenmodes
$z_{\lambda,j}$ of the linear evolution (\Cref{eq:fourier-linear}) on
$C(\N \times \R)$. These eigenmodes are vanishing except in the first
spatial modes where they satisfy $z_{\lambda,j}(1,\cdot) \in
L^\infty(\R^+,\exp_a)$.

For the center-unstable manifold reduction we therefore look for
solutions in
\begin{equation*}
  \mathcal{Z}^a = \{ u \in C(\N \times \R):
  \| u \|_{\mathcal{Z}^a} < \infty \}
\end{equation*}
with
\begin{equation*}
  \| u \|_{\mathcal{Z}^a} = \sup_{l\ge 1}
  \|u(l,\cdot) \|_{L^\infty(\R,\exp_a)}
  = \sup_{l\ge 1} \sup_{\xi\in\R} e^{a\xi} |u(l,\xi)|.
\end{equation*}
The nonlinearity is not controlled in $\mathcal{Z}^a$ but within
\begin{equation*}
  \mathcal{Y}^a = \{ u \in C(\N \times \R):
  \| u \|_{\mathcal{Y}^a} < \infty \}
\end{equation*}
with
\begin{equation*}
  \| u \|_{\mathcal{Y}^a} = \sup_{l\ge 1} l^{-1}
  \|u(l,\cdot) \|_{L^\infty(\R,\exp_a)}
  = \sup_{l\ge 1} \sup_{\xi\in\R} \frac{e^{a\xi}}{l} |u(l,\xi)|.
\end{equation*}
By a spectral analysis we can find a continuous projection $P_{cu}$ from
$\mathcal{Y}^a$ to
$\mathcal{Z}^a_{cu} := \sspan{z_{\lambda_i,j} : i=1,\dots ,n \text{
    and } j=0,\dots,p_i-1}$
with complementary projection $P_s$ mapping $\mathcal{Z}^a$ to
$\mathcal{Z}^a_s$ and $\mathcal{Y}^a$ to $\mathcal{Y}^a_s$. The image
of $P_s$ is invariant under the linear evolution and decaying with
rate $a$. In fact the higher modes decay quicker in this norm, so that
the solution to the linear evolution with forcing in $\mathcal{Y}^a_s$
is within $\mathcal{Z}^a_s$.

Hence the theory of center-unstable manifold reduction
\cite{haragus2011local,vanderbauwhede1992center,vanderbauwhede1989centre}
applies and we can reduce the dynamics for the local behaviour. This
is our third main result and here we also consider $K$ as variable in
order to discuss the asymptotic result for couplings $K$ close to a
fixed coupling $K_c$.

\begin{thm}
  \label{thm:center-unstable-kuramoto}
  Given $a > 0$ and $g \in \mathcal{M}(\R)$ with
  $\hat{g} \in L^1(\R^+,\exp_a)$. Let $K_c$ be a coupling constant
  such that \Cref{thm:linear-volterra-modes} applies with
  center-unstable modes $\lambda_1,\dots ,\lambda_n$ satisfying
  $\Re \lambda_i \ge 0$ for $i=1,\dots,n$. For $k \in \N$ there exists
  $\psi \in C^k(\mathcal{Z}^a_{cu} \times \R, \mathcal{Z}^a_{s})$ and
  $\delta > 0$ such that for $|\epsilon| \le \delta$ the manifold
  \begin{equation*}
    \mathcal{M}_{\epsilon} = \{y + \psi(y,\epsilon) : y \in
    \mathcal{Z}^a_{cu}\}
  \end{equation*}
  is invariant and exponentially attractive under the nonlinear
  evolution with coupling constant $K = K_c + \epsilon$ in the region
  $\{ u \in \mathcal{Z}^a : \| u \|_{\mathcal{Z}^a} \le
  \delta\}$. Moreover, the derivatives of $\psi$ can explicitly be
  computed at $0$.
\end{thm}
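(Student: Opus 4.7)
The plan is to recast \Cref{eq:evolution-double-fourier} as an abstract evolution
\[
  \partial_t u = L u + B_\epsilon(u,u)
\]
with $L$ the linear operator of \Cref{eq:fourier-linear} at coupling $K_c$ and the bilinear form $B_\epsilon$ collecting the remaining nonlinear interaction together with the $\epsilon=K-K_c$ correction, and then apply the abstract center-unstable manifold theorem of \cite{haragus2011local,vanderbauwhede1992center}. The program has four ingredients: a spectral decomposition $\mathcal{Z}^a = \mathcal{Z}^a_{cu}\oplus\mathcal{Z}^a_s$, exponential decay of the linear evolution on $\mathcal{Z}^a_s$, a smoothing estimate that compensates the loss of regularity between $\mathcal{Y}^a$ and $\mathcal{Z}^a$, and smoothness of $B_\epsilon$.

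For the spectral decomposition, \Cref{thm:linear-volterra-modes} yields finitely many eigenmodes $z_{\lambda_i,j}$ of $L$, supported on the first spatial Fourier mode and with $z_{\lambda_i,j}(1,\cdot)\in L^\infty(\R,\exp_a)$, corresponding to the zeros $\lambda_i$ of $1+(\lap k)$ in $\Re z>-a$; the center-unstable ones are those with $\Re\lambda_i\ge 0$. I would construct $P_{cu}$ by a Dunford-type contour integral (equivalently, via residues of $[1+(\lap k)(z)]^{-1}$ at the $\lambda_i$) so that it reads off the eigenmode coefficients from $u(1,\cdot)\in L^\infty(\R,\exp_a)$, and then check continuity both from $\mathcal{Z}^a$ and from $\mathcal{Y}^a$ onto the finite-dimensional space $\mathcal{Z}^a_{cu}$.

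The decay estimates follow from \Cref{thm:linear-evolution}: on $\mathcal{Z}^a_s$ the order parameter is governed by a stable Volterra resolvent $r_s\in L^1(\R^+,\exp_a)$, giving $\|\nu\|_{L^\infty(\R^+,\exp_a)}\lesssim\|u_{in}\|_{\mathcal{Z}^a_s}$, and inserting this into \Cref{eq:linear-order-first-mode,eq:linear-order-higher-modes} yields $\|e^{tL}u_{in}\|_{\mathcal{Z}^a_s}\lesssim e^{-at}\|u_{in}\|_{\mathcal{Z}^a_s}$. The crucial smoothing estimate for inhomogeneous forcing $f\in\mathcal{Y}^a_s$ follows from the same representation: the shift $\xi\mapsto\xi+l(t-s)$ in \Cref{eq:linear-order-higher-modes} produces a factor $e^{-al(t-s)}$ which exactly absorbs the $l$-prefactor coming from the nonlinearity, so that the Duhamel solution lies in $\mathcal{Z}^a_s$ and not merely in $\mathcal{Y}^a_s$ with a uniform-in-time bound. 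Smoothness of $B_\epsilon:\mathcal{Z}^a\times\mathcal{Z}^a\to\mathcal{Y}^a$ is then immediate because $\eta=u(1,0)$ is a continuous linear functional on $\mathcal{Z}^a$ and the $l$ coefficient in the nonlinearity is precisely compensated by the $l^{-1}$ weight of $\mathcal{Y}^a$.

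With these four ingredients in place the abstract theorem produces $\psi\in C^k(\mathcal{Z}^a_{cu}\times\R,\mathcal{Z}^a_s)$ whose graph $\mathcal{M}_\epsilon$ is locally invariant and exponentially attractive in $\{\|u\|_{\mathcal{Z}^a}\le\delta\}$, and the Taylor coefficients of $\psi$ at $0$ are then recovered by differentiating the invariance identity
\[
  D_y\psi(y,\epsilon)\bigl(L_{cu}y+P_{cu}B_\epsilon(y+\psi(y,\epsilon))\bigr)=L_s\psi(y,\epsilon)+P_sB_\epsilon(y+\psi(y,\epsilon))
\]
and iteratively solving the resulting linear equations on $\mathcal{Z}^a_s$. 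The main obstacle is precisely the smoothing estimate: the nonlinearity only maps into the weaker space $\mathcal{Y}^a$, so the abstract framework requires verifying that the stable part of the semigroup effectively gains one full factor of $l$, and this is where the $l$-dependent transport speed of \Cref{eq:fourier-linear} must be used carefully both in the homogeneous semigroup bound and in the forced Duhamel estimate so that the fixed-point construction underlying the abstract theorem actually closes.
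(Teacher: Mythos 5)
Your proposal matches the paper's proof in all essentials: the Dunford/residue construction of $P_{cu}$ from the zeros of $1+(\lap k)$, the exponential decay on $\mathcal{Z}^a_s$ via the stable Volterra resolvent $r_s$, the key smoothing estimate that the $l$-dependent transport speed turns $\mathcal{Y}^a_s$-forcing into a uniformly bounded $\mathcal{Z}^a_s$-valued Duhamel integral (the paper's \Cref{thm:convolution-integral}), smoothness of the nonlinearity into $\mathcal{Y}^a$, and then the Lyapunov--Perron fixed point of \cite{vanderbauwhede1992center,haragus2011local}. The only differences are presentational: the paper suspends $\epsilon$ as a trivially evolving extra coordinate and writes out the symmetry-preserving cut-off $s_\delta$ of the nonlinearity explicitly (plus the final identification with the mean-field solution via \Cref{thm:fourier-uniqueness}), whereas you treat $\epsilon$ as a parameter, leave the localisation to the abstract theorem, and read off the Taylor coefficients from the invariance identity rather than the fibre-contraction formula --- all equivalent.
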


Hence we can reduce the dynamics around the spatially homogeneous
distribution to the finite dimensional dynamics of
$\mathcal{M}_{\epsilon}$. We demonstrate the application by recovering
Chiba's bifurcation result for the Gaussian distribution
\cite{chiba2013ergodic}. In particular, it shows the nonlinear
stability of the partially locked states close to the bifurcation.

The key point for the center-unstable manifold reduction is the use of
the regularity in the natural frequency $\omega$. This allows the use
of the weighted space $\mathcal{Z}^a$ on which the linear generator
does not have a continuous spectrum along the imaginary axis as seen
in earlier linear analysis \cite{strogatz1991stability}. Instead the
linear evolution decays with rate $al$ in the $l$th spatial mode
except in the first mode $l=1$ where a discrete spectrum can
arise. Thus the effect of using $\mathcal{Z}^a$ is very similar to
adding white noise \cite{sakaguchi1988cooperative} of strength $D$ to
the system and in this case $D>0$ the stability and center manifold
reduction have been done
\cite{strogatz1991stability,bonilla1992nonlinear,crawford1994amplitude}.

We remark that the use of $\mathcal{Z}^a$ is compatible with adding
noise and thus for sufficiently regular velocity distribution the
center-unstable manifold reduction can be used to study the
bifurcation behaviour with the noise strength $D$ as additional
parameter in the limit $D \to 0$. The behaviour of the amplitude
equations in the center manifold reduction in this limit is reviewed
in \cite[Section 11]{strogatz2000from} and this explains why taking
the limit $D \to 0$ gave the correct behaviour for regular
distributions \cite{crawford1994amplitude}. Note that even in this
case the order parameter can decay faster than rate $D$ through the
Landau damping mechanism.

In order to understand the norm $\mathcal{Z}^a$ (which is also the
measure for the perturbations in \Cref{thm:exponential-stability}) we
relate the norm to the analytic continuation in the strip
$\{z \in \C: 0 \le \Im z \le a\}$ of the complex plane. Here we can
demonstrate explicitly that for suitable velocity marginals the
partially locked states have finite norm in $\mathcal{Z}^a$ and are
small perturbations if their order parameter $\eta$ is small. In
particular, this shows that partially locked states with a small $\eta$
and coupling $K$ just above $K_c$ result in exponential damping when
the coupling is lowered below $K_c$ by
\Cref{thm:exponential-stability}. This is particularly interesting,
because these partially locked states are not regular in general,
e.g. they do not even have a density.

The exponential stability of the incoherent state and the convergence
in $\mathcal{Z}^a$ to the reduced manifold do not imply convergence in
$L^2$. Nevertheless, we can relate it to weak convergence for
sufficiently nice test functions because the Fourier transform is
bounded.

In case of the algebraic stability (\Cref{thm:algebraic-stability}),
the convergence is very weak but we can conclude
$\int_0^{\infty} |\eta(t)| \dd t < \infty$. Going back to the original
equation this shows control in the gliding frame and weak convergence
of the phase marginal.

The plan of the paper is as follows: In \Cref{sec:mean-field-limit} we
review the notion of the considered solutions and their existence and
uniqueness. In \Cref{sec:stability-energy} we show the global energy
estimate (\Cref{thm:energy-stability}). The linear evolution and its
stability is discussed in \Cref{sec:linearised-stability}. In
\Cref{sec:nonlinear-stability} we use the linear stability to show
nonlinear stability. In preparation for the center-unstable manifold
reduction, the linear decomposition is derived in
\Cref{sec:spectral-decomposition}. With this the nonlinear reduction
(\Cref{thm:center-unstable-kuramoto}) is shown in
\Cref{sec:center-unstable-reduction}. The discussion of the norm and
the convergence are in \Cref{sec:norms}.

\subsection{Acknowledgements}
I would like to thank Clément Mouhot for many helpful suggestions and
discussions. Furthermore, I would like to thank Bastien Fernandez,
David Gérard-Varet and Giambattista Giacomin for inviting me to Paris
and discussing a first draft of the work.

\section{Mean-field limit}
\label{sec:mean-field-limit}

Early studies of the mean-field limit for the Kuramoto equation are
\cite{strogatz1991stability,sakaguchi1988phase}. Lancellotti
\cite{lancellotti2005vlasov} noted that the theory developed for the
Vlasov equation
\cite{braun1977vlasov,dobrushin1979vlasov,neunzert1984introduction}
applies because the interaction is Lipschitz. This setup is also
reviewed in \cite{carrillo2014contractivity} and was derived using
moments in \cite{chiba2013continuous}.

Since we want to be able to consider partially locked states, we
formulate solutions in terms of probability measures. As mentioned in
the introduction, we assume the initial data for time $t=0$ and solve
for $t \in \R^+$, because we have global existence and uniqueness.

\begin{definition}
  \label{def:kuramoto-solution}
  Let $C_{\mathcal{M}}$ be the solution space
  $C_{w}(\R^+,\mathcal{M}(\Gamma))$, which consists of the families of
  weakly continuous probability measures on $\Gamma$,
  i.e. $\rho_{\cdot} \in C_{\mathcal{M}}$ is a family
  $\{\rho_t \in \mathcal{M}(\R) : t \in \R^+\}$ such that for every
  $h \in C_b(\Gamma)$ the function $t \mapsto \rho_t(h)$ is
  continuous.

  Let $\rho_{in} \in \mathcal{M}(\Gamma)$. Then $\rho_{\cdot} \in
  C_{\mathcal{M}}$ is a solution to the Kuramoto equation with initial
  data $\rho_{in}$ if for all $h \in C^1_0(\R^+ \times \Gamma)$
  \begin{equation}
    \label{eq:kuramoto-weak}
    0 = \int_{\Gamma} h(0,P) \rho_{in}(\dd P)
    + \int_{\R^+} \int_{\Gamma}
    [ V[\rho_{\cdot}](t,P) \partial_{\theta} h(t,P)
    + \partial_t h(t,P) ] \rho_t(\dd P) \dd t,
  \end{equation}
  where
  \begin{equation*}
    V[\rho_{\cdot}](t,\theta,\omega) =
    \omega + \frac{K}{2i} \left( \eta(t) e^{-i\theta} - \conj{\eta(t)}
      e^{i\theta} \right)
    \text{ with }
    \eta(t) = \int_{(\theta,\omega) \in \Gamma} e^{i \theta} \rho_t(\dd
    \theta, \dd \omega).
  \end{equation*}
\end{definition}

The trajectories $T_{t,s}[\rho_{\cdot}] : \Gamma \mapsto \Gamma$ of
the system are defined as $T_{t,s}[\rho_{\cdot}](Q) = P(t)$ where $P :
\R^+ \mapsto \Gamma$ is the solution to the initial value problem
\begin{equation*}
  \begin{cases}
    \frac{\dd}{\dd t} P(t) = (V[\rho_{\cdot}](t,P(t)),0), \\
    P(s) = Q.
  \end{cases}
\end{equation*}
Since $|\eta(t)| \le 1$ for all $t \in \R^+$, the velocity field
$V[\rho_{\cdot}]$ is globally Lipschitz and the trajectories are
well-defined. Moreover, all derivatives of $V[\rho_{\cdot}](t,P)$ with
respect to $P$  are bounded, so that $T_{t,0}[\rho](P)$ also has
uniformly bounded derivatives of all orders for finite ranges of
$t$. Finally, $T_{t,s}$ is invertible with inverse $T_{s,t}$ and thus
a diffeomorphism.

By standard arguments for scalar transport equation, the following
lemma holds.
\begin{lemma}
  \label{thm:kuramoto-trajectories}
  Let $\rho_{in} \in \mathcal{M}(\Gamma)$. Then
  $\rho_{\cdot} \in C_{\mathcal{M}}$ is a solution to the Kuramoto
  equation if and only if
  $\rho_t = (T_{t,0}[\rho_{\cdot}])_{*}(\rho_{in})$, i.e.
  $\rho_t(A) = \rho_{in}((T_{t,0}[\rho_{\cdot}])^{-1}(A)) =
  \rho_{in}(T_{0,t}[\rho_{\cdot}](A))$
  for $A \in \mathcal{B}(\Gamma)$.
\end{lemma}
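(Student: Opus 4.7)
The plan is to prove both implications via the method of characteristics. Once $\rho_{\cdot}$ is fixed, the weak PDE \eqref{eq:kuramoto-weak} becomes a linear transport equation whose characteristics are precisely the trajectories $T_{t,s}[\rho_{\cdot}]$, so the equivalence reduces to duality between push-forward by the flow and pull-back of test functions along it. The pivotal identity is that for $\phi \in C^1_c(\Gamma)$ and fixed $t > 0$, the function $u(s,P) := \phi(T_{t,s}[\rho_{\cdot}](P))$ satisfies
\begin{equation*}
  \partial_s u(s,P) + V[\rho_{\cdot}](s,P)\,\partial_\theta u(s,P) = 0
\end{equation*}
for $s \in [0,t]$. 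This follows by differentiating the constancy of $r \mapsto T_{t,r}(P(r))$ along a trajectory $r \mapsto P(r)$ solving $\dot P = V[\rho_{\cdot}](r,P)$, which yields $\partial_s T_{t,s}|_P = -DT_{t,s}|_P \cdot V[\rho_{\cdot}](s,P)$, combined with the fact that $V$ has no $\omega$-component.

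For the \emph{if} direction, substitute $\rho_t = (T_{t,0}[\rho_{\cdot}])_{*}\rho_{in}$ into \eqref{eq:kuramoto-weak} and change variables $P = T_{t,0}(Q)$ to pass integration over $\rho_t$ to integration over $\rho_{in}$. The bulk integrand becomes $\partial_t h(t,T_{t,0}(Q)) + V(t,T_{t,0}(Q))\,\partial_\theta h(t,T_{t,0}(Q)) = \tfrac{\dd}{\dd t}[h(t,T_{t,0}(Q))]$ by the defining ODE of $T_{t,0}$, and integration in $t \in \R^+$ cancels the initial term $\int h(0,P)\,\rho_{in}(\dd P)$ via the fundamental theorem of calculus together with the compact support of $h$.

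For the \emph{only if} direction, fix $t > 0$ and $\phi \in C^1_c(\Gamma)$, choose a cutoff $\psi_\epsilon \in C^1(\R^+)$ with $\psi_\epsilon \equiv 1$ on $[0,t]$ and support in $[0,t+\epsilon]$, and test \eqref{eq:kuramoto-weak} against $h_\epsilon(s,P) = \psi_\epsilon(s)\,\phi(T_{t,s}[\rho_{\cdot}](P))$. The transport identity above kills the bulk contribution on $[0,t]$, leaving $\int_t^{t+\epsilon} \psi'_\epsilon(s) \int_\Gamma u(s,P)\,\rho_s(\dd P)\,\dd s$, which tends to $-\rho_t(\phi)$ as $\epsilon \to 0$ by weak continuity of $s \mapsto \rho_s$ and joint continuity of $u$ in $(s,P)$. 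The initial term contributes $((T_{t,0})_{*}\rho_{in})(\phi)$; since $C^1_c(\Gamma)$ is separating for probability measures on $\Gamma$, this gives $\rho_t = (T_{t,0})_{*}\rho_{in}$. The main technical obstacle is verifying $h_\epsilon \in C^1_0(\R^+ \times \Gamma)$: joint $C^1$-regularity of $(s,P) \mapsto T_{t,s}(P)$ follows from smoothness of $V[\rho_{\cdot}]$ in $P$ (the excerpt already records uniform boundedness of all $P$-derivatives of $T_{t,0}$) together with standard ODE dependence on the initial time, while compact support in $P$ uniformly over $s \in [0,t+\epsilon]$ follows because $T_{s,t}$ is a diffeomorphism and hence sends $\operatorname{supp}\phi$ to a compact set depending continuously on $s$.
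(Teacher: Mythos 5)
Your proof is correct: once $\rho_{\cdot}$ is fixed, the weak equation \eqref{eq:kuramoto-weak} is indeed a linear transport problem, and your characteristics/duality argument (the identity $\partial_s[\phi\circ T_{t,s}] + V[\rho_{\cdot}]\,\partial_\theta[\phi\circ T_{t,s}]=0$, the change of variables for the ``if'' direction, and the cutoff $\psi_\epsilon$ with weak continuity of $s\mapsto\rho_s$ for the ``only if'' direction) is exactly the ``standard argument for scalar transport equations'' that the paper invokes without writing out. You also correctly handle the two points the paper leaves implicit, namely the joint $C^1$-regularity of $(s,P)\mapsto T_{t,s}[\rho_{\cdot}](P)$ needed for $h_\epsilon\in C^1_0$ and the compactness of its support (which here is even automatic in $\omega$ since the flow leaves $\omega$ fixed and $\theta$ lives on the torus).
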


From this we see that the empirical measures of solutions of the
finite Kuramoto model are solutions to the Kuramoto equation in the
sense of \Cref{def:kuramoto-solution}. Neun\-zert's general framework
\cite{neunzert1984introduction,lancellotti2005vlasov} implies:
\begin{thm}
  The Kuramoto equation is well-posed. In particular, for every
  $\rho_{in} \in \mathcal{M}(\Gamma)$ there exists a unique global
  solution $\rho_{\cdot} \in C_{\mathcal{M}}$ to the Kuramoto equation
  with initial data $\rho_{in}$.
\end{thm}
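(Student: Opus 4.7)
The plan is to recast the problem as a fixed point. By \Cref{thm:kuramoto-trajectories}, a family $\rho_{\cdot} \in C_{\mathcal{M}}$ solves the Kuramoto equation with initial data $\rho_{in}$ if and only if it is a fixed point of the map
\begin{equation*}
  \Phi : \rho_{\cdot} \longmapsto \bigl( (T_{t,0}[\rho_{\cdot}])_{*}(\rho_{in}) \bigr)_{t \in \R^+},
\end{equation*}
so it suffices to show that $\Phi$ has exactly one fixed point in $C_{\mathcal{M}}$. I would work with the bounded Lipschitz (Kantorovich--Rubinstein) distance
\begin{equation*}
  d_{BL}(\mu,\nu) = \sup\bigl\{ |\mu(h) - \nu(h)| : h \in C_b(\Gamma),\ \|h\|_{\infty} \le 1,\ \mathrm{Lip}(h) \le 1 \bigr\},
\end{equation*}
which metrizes weak convergence on $\mathcal{M}(\Gamma)$, and on each finite interval $[0,T]$ use $d_T(\rho,\tilde\rho) = \sup_{t \le T} d_{BL}(\rho_t,\tilde\rho_t)$.

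The first step is a Lipschitz estimate on the order parameter: since $\theta \mapsto e^{i\theta}$ is bounded by $1$ and $1$-Lipschitz on $\T$, we have $|\eta[\rho_t] - \eta[\tilde\rho_t]| \le d_{BL}(\rho_t,\tilde\rho_t)$. Together with the uniform bound $|\eta|\le 1$ (inherent to probability measures on $\Gamma$), this gives that $V[\rho_{\cdot}]$ is globally Lipschitz in $P$, uniformly in $t$, with
\begin{equation*}
  |V[\rho_{\cdot}](t,P) - V[\tilde\rho_{\cdot}](t,P)| \le K\, d_{BL}(\rho_t,\tilde\rho_t).
\end{equation*}
A Gronwall argument applied to the ODE defining the characteristics then yields, for $t \le T$,
\begin{equation*}
  |T_{t,0}[\rho_{\cdot}](P) - T_{t,0}[\tilde\rho_{\cdot}](P)| \le C_1(T)\, d_T(\rho,\tilde\rho),
\end{equation*}
together with a uniform Lipschitz constant for $P \mapsto T_{t,0}[\rho_{\cdot}](P)$ depending only on $T$ and $K$.

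Pushing $\rho_{in}$ forward and testing against bounded $1$-Lipschitz functions $h$, a change of variables gives
\begin{equation*}
  d_{BL}(\Phi(\rho)_t,\Phi(\tilde\rho)_t) \le C_2(T)\, d_T(\rho,\tilde\rho), \qquad t \in [0,T],
\end{equation*}
with $C_2(T) \to 0$ as $T \to 0$. Hence on some interval $[0,T_0]$, with $T_0$ depending only on $K$, the map $\Phi$ is a strict contraction on the complete metric space $(C_w([0,T_0],\mathcal{M}(\Gamma)),d_{T_0})$ and the Banach fixed point theorem produces a unique solution on $[0,T_0]$. Since $T_0$ is independent of $\rho_{in}$, iterating the construction from times $T_0, 2T_0, \dots$ yields global existence and uniqueness on $\R^+$.

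The main obstacle is purely bookkeeping: one must check that $\Phi$ actually maps $C_w([0,T],\mathcal{M}(\Gamma))$ into itself, i.e.\ that $t \mapsto \Phi(\rho)_t$ is weakly continuous. This reduces to continuity of $t \mapsto T_{t,0}[\rho_{\cdot}]$ as a family of diffeomorphisms, which rests on continuity of $t \mapsto V[\rho_{\cdot}](t,\cdot)$, and ultimately on continuity of $t \mapsto \eta(t)$; the latter is automatic from the weak continuity of $\rho_{\cdot}$ and the fact that $(\theta,\omega) \mapsto e^{i\theta}$ is bounded continuous. Once this is in place, the contraction estimate above closes the argument, and this is exactly the content of Neunzert's framework cited in the statement.
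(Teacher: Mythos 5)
Your argument is correct and is precisely the Dobrushin--Neunzert contraction scheme that the paper invokes by citation rather than writing out: fixed point for the pushforward along characteristics, bounded-Lipschitz metric, short-time contraction with a constant depending only on $K$, and iteration. The only detail worth double-checking in a full writeup is the completeness of $(\mathcal{M}(\Gamma), d_{BL})$ (which holds since $\Gamma$ is Polish), but this is standard and your proof matches the paper's intended route.
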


The well-posedness implies that this is the mean-field limit as
$N \to \infty$. For this consider the empirical measure $\rho_{N,t}$
of a system of $N$ oscillators and suppose
$\rho_{N,0} \rightharpoonup \rho_{in}$. As the empirical measure is a
solution, the well-posedness shows that at later times $t$ also
$\rho_{N,t} \rightharpoonup \rho_{t}$.

\Cref{thm:kuramoto-trajectories} shows that the regularity is
preserved along the flow as we can see from the following lemma.
\begin{lemma}
  \label{thm:density-propagation}
  Let $\rho_{in} \in \mathcal{M}(\Gamma)$ and $\rho_{\cdot} \in
  C_{\mathcal{M}}$ be the solution to the Kuramoto equation with
  initial data $\rho_{in}$. If $\rho_{in}$ has a density $f_{in}$,
  then for every $t \in \R^+$ the measure $\rho_t$ has a density
  $f(t,\cdot)$ given by the flow
  \begin{equation*}
    f(t,P) = f_{in}(t,T_{0,t}[\rho_{\cdot}](P))
    \divergence(T_{0,t}[\rho_{\cdot}]).
  \end{equation*}
\end{lemma}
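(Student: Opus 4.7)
The plan is to combine \Cref{thm:kuramoto-trajectories}, which identifies $\rho_t$ as the pushforward $(T_{t,0}[\rho_\cdot])_{*}\rho_{in}$, with the regularity of the flow map already noted before that lemma. Since $V[\rho_\cdot](t,\cdot)$ has globally bounded derivatives of all orders with respect to $P$, and since $t\mapsto\eta(t)$ is continuous because $\rho_\cdot\in C_{\mathcal{M}}$, the Cauchy--Lipschitz theory applies smoothly: $T_{t,s}[\rho_\cdot]$ is a $C^1$ diffeomorphism of $\Gamma$ with inverse $T_{s,t}[\rho_\cdot]$, and the Jacobian determinant $|\det \nabla_P T_{0,t}[\rho_\cdot](P)|$ is positive, continuous in $(t,P)$, and bounded on compact time intervals by Liouville's formula.

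Granted this, for any Borel set $A\subset\Gamma$ I would write
\begin{equation*}
  \rho_t(A)
  = \rho_{in}\bigl(T_{0,t}[\rho_\cdot](A)\bigr)
  = \int_{T_{0,t}[\rho_\cdot](A)} f_{in}(Q)\,\dd Q
\end{equation*}
and then perform the change of variables $Q=T_{0,t}[\rho_\cdot](P)$ using the diffeomorphism property, yielding
\begin{equation*}
  \rho_t(A)
  = \int_{A} f_{in}\bigl(T_{0,t}[\rho_\cdot](P)\bigr)\,
     \bigl|\det \nabla_P T_{0,t}[\rho_\cdot](P)\bigr|\,\dd P.
\end{equation*}
Since $A$ was arbitrary, this shows that $\rho_t$ is absolutely continuous with density given by the integrand, matching the formula in the statement once the symbol $\divergence$ is interpreted as the Jacobian factor arising from the pushforward.

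There is no genuine obstacle here beyond bookkeeping: the core input (Lipschitz and higher regularity of $V[\rho_\cdot]$, valid because $|\eta(t)|\le 1$) is already stated, and the rest is the classical change of variables for diffeomorphisms. The only points I would be careful with are (i) the pointwise in $t$ (rather than almost-everywhere in $t$) assertion of a density, which follows from the continuity in $t$ of both $T_{0,t}[\rho_\cdot]$ and its Jacobian on finite time intervals, and (ii) checking measurability of the composition $f_{in}\circ T_{0,t}[\rho_\cdot]$, which is immediate because $T_{0,t}[\rho_\cdot]$ is continuous and $f_{in}$ is Borel measurable as a density.
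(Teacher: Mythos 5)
Your argument is correct and is essentially the paper's own proof: identify $\rho_t$ as the pushforward via \Cref{thm:kuramoto-trajectories} and apply the change-of-variables formula for the diffeomorphism $T_{0,t}[\rho_{\cdot}]$, reading the $\divergence$ factor in the statement as the Jacobian determinant. You merely spell out the bookkeeping (measurability, continuity in $t$) that the paper leaves implicit.
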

\begin{proof}
  If $\rho_{\cdot}$ is a solution,
  $\rho_t = (T_{t,0}[\rho_{\cdot}])_{*}(\rho_{in})$. Since
  $T_{t,0}[\rho_{\cdot}]$ is differentiable, $\rho_t$ has the density
  $f(t,\cdot)$.
\end{proof}

\Cref{thm:kuramoto-trajectories} also shows that the velocity marginal
is constant in time.
\begin{lemma}
  \label{thm:conservation-velocity-marginal}
  Let $\rho_{in} \in \mathcal{M}(\Gamma)$ and $g \in \mathcal{M}(\R)$
  be its velocity marginal. If $\rho_{\cdot} \in C_{\mathcal{M}}$ is
  the solution to the Kuramoto equation, then $g$ is for all $t \in
  \R^+$ the velocity marginal of $\rho_t$.
\end{lemma}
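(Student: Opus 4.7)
The plan is to exploit the structural fact that the characteristic vector field $V[\rho_\cdot]$ has vanishing $\omega$-component, so the natural frequency of each trajectory is conserved; the marginal conservation will then be an immediate consequence of the pushforward formula in \Cref{thm:kuramoto-trajectories}.

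More concretely, I would first inspect the initial value problem defining $T_{t,s}[\rho_\cdot]$. Writing $P = (\theta,\omega)$, the ODE is
\begin{equation*}
  \frac{\dd}{\dd t}(\theta(t),\omega(t)) = (V[\rho_\cdot](t,\theta(t),\omega(t)),\,0),
\end{equation*}
so $\omega(t) \equiv \omega(s)$ along every characteristic. Hence $T_{t,s}[\rho_\cdot](\theta_0,\omega_0) = (\Theta(t;s,\theta_0,\omega_0),\,\omega_0)$ for some phase map $\Theta$, and in particular the projection $\pi_\omega : \Gamma \to \R$ onto the velocity coordinate satisfies $\pi_\omega \circ T_{t,s}[\rho_\cdot] = \pi_\omega$, and similarly for the inverse $T_{s,t}[\rho_\cdot]$.

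Second, I would apply \Cref{thm:kuramoto-trajectories}: for any Borel set $A \in \mathcal{B}(\R)$,
\begin{equation*}
  \rho_t(\T \times A)
  = \rho_{in}\!\bigl(T_{0,t}[\rho_\cdot](\T \times A)\bigr).
\end{equation*}
Since $T_{0,t}[\rho_\cdot]$ preserves the $\omega$-coordinate and is a diffeomorphism of $\Gamma$, its image of $\T \times A = \pi_\omega^{-1}(A)$ is again $\pi_\omega^{-1}(A) = \T \times A$. Therefore $\rho_t(\T\times A) = \rho_{in}(\T\times A) = g(A)$, which is precisely the claim that $g$ is the velocity marginal of $\rho_t$ for every $t \in \R^+$.

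There is no real obstacle here: the only point one needs to be careful about is that $T_{t,s}[\rho_\cdot]$ is well-defined and a diffeomorphism so that the pushforward identity of \Cref{thm:kuramoto-trajectories} genuinely applies, but this has already been established from the global Lipschitz property of $V[\rho_\cdot]$ following from $|\eta(t)| \le 1$. The proof is thus essentially a one-line consequence of the fact that the Kuramoto dynamics acts trivially on $\omega$.
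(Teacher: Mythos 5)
Your argument is correct and coincides with the paper's own proof: the paper also invokes \Cref{thm:kuramoto-trajectories} to write $\rho_t = (T_{t,0}[\rho_{\cdot}])_{*}(\rho_{in})$ and then observes that the trajectories leave the velocity marginal invariant. You simply spell out in more detail why the $\omega$-component is conserved, which is fine.
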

\begin{proof}
  By \Cref{thm:kuramoto-trajectories} $\rho_t =
  (T_{t,0}[\rho_{\cdot}])_{*}(\rho_{in})$ and the trajectories leave
  the velocity marginals invariant.
\end{proof}

For a solution $\rho_{\cdot} \in C_{\mathcal{M}}$ and velocity
distribution $g \in \mathcal{M}(\R)$ define the Fourier transform $u$
respectively $\hat{g}$ by \Cref{eq:fourier-u,eq:fourier-g}. The
evolution of the difference to the uniform state is captured by the
restriction to $l \ge 1$. Its evolution is described in Fourier
variables as follows:
\begin{thm}
  \label{thm:fourier-solution}
  Suppose $\rho_{in} \in \mathcal{M}(\Gamma)$ with velocity marginal
  $g \in \mathcal{M}(\R)$. Let $\rho_{\cdot} \in C_{\mathcal{M}}$ be
  the solution to the Kuramoto equation. Then the Fourier transform
  $u$ is in $C(\R^+ \times \N \times \R)$, is bounded uniformly by
  $1$ and satisfies
  \begin{equation}
    \label{eq:kuramoto-fourier-weak}
    \begin{split}
      0 &= \sum_{l\in\Z} \int_{\R} u_{in}(l,\xi) h(0,l,\xi) \dd
      \xi \\
      &+ \int_{\R^+} \int_{\R}
      \frac{K}{2} \hat{g}(\xi) \eta(t) h(t,1,\xi)
      + \sum_{l \ge 2} \frac{Kl}{2} u(t,l-1,\xi) \eta(t) h(t,l,\xi) \\
      &\quad + \sum_{l\in\N}
      \left[- \frac{Kl}{2} u(t,l+1,\xi) \conj{\eta(t)} h(t,l,\xi)
        + u(t,l,\xi)
        \left[\partial_t h(t,l,x) - l \partial_{\xi} h(t,l,\xi)\right]
      \right]
      \dd \xi \dd t
    \end{split}
  \end{equation}
  for all $h \in C^1_0(\R^+ \times \N \times \R)$, where
  $\eta(t) = u(t,1,0)$ and $\hat{g}$ is the Fourier transform of $g$.
\end{thm}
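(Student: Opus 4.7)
The plan is as follows. The pointwise bound $|u(t,l,\xi)|\le 1$ is immediate from $|e^{il\theta}e^{i\xi\omega}|\equiv 1$ and $\rho_t(\Gamma)=1$. Continuity in $l$ is trivial since the index set is discrete, so I only need to show that $(t,\xi)\mapsto u(t,l,\xi)$ is continuous for each $l$. For fixed $\xi$, continuity in $t$ follows directly from the weak continuity of $\rho_{\cdot}\in C_{\mathcal{M}}$ applied to the bounded continuous function $(\theta,\omega)\mapsto e^{il\theta}e^{i\xi\omega}$. For joint continuity in $(t,\xi)$, I would exploit that the velocity marginal $g$ is tight and preserved in time (\Cref{thm:conservation-velocity-marginal}): choosing $R$ with $g(\{|\omega|>R\})<\epsilon$ yields $|u(t,l,\xi_n)-u(t,l,\xi)|\le R|\xi_n-\xi|+2\epsilon$, which combined with continuity in $t$ forces joint continuity.

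For the weak formulation \Cref{eq:kuramoto-fourier-weak} I would build on \Cref{thm:kuramoto-trajectories} to represent
\begin{equation*}
  u(t,l,\xi)=\int_{\Gamma}e^{il\theta(t;\theta_0,\omega)}\,e^{i\xi\omega}\,\rho_{in}(\dd\theta_0,\dd\omega),
\end{equation*}
where $(\theta(t;\theta_0,\omega),\omega)=T_{t,0}[\rho_{\cdot}](\theta_0,\omega)$; since $t\mapsto\eta(t)$ is continuous (weak continuity of $\rho_{\cdot}$ applied to $e^{i\theta}\in C_b(\Gamma)$), the trajectory $\theta(\cdot)$ is $C^1$ with $\dot\theta(t)=V[\rho_{\cdot}](t,\theta(t),\omega)$. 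Given $h\in C^1_0(\R^+\times\N\times\R)$, supported on finitely many $l$ and compactly in $(t,\xi)$, I would substitute the trajectory representation into $\sum_l\int\int u(t,l,\xi)\,\partial_t h(t,l,\xi)\,\dd\xi\,\dd t$, apply Fubini (legitimate because the integrand against $\rho_{in}$ is bounded by $1$ and $h$ is compactly supported), and integrate by parts in $t$. The boundary term at $t=0$ contributes $-\sum_l\int h(0,l,\xi)u_{in}(l,\xi)\,\dd\xi$, while the interior term produces $-\sum_l\int\int il\,\dot\theta(t)\,e^{il\theta(t)}e^{i\xi\omega}\,h(t,l,\xi)\,\rho_{in}\,\dd\xi\,\dd t$.

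Next I would substitute $V=\omega+\tfrac{K}{2i}(\eta e^{-i\theta}-\conj{\eta}e^{i\theta})$ and push $\rho_{in}$ forward to $\rho_t$ via $T_{t,0}$; the two interaction pieces then become $\tfrac{Kl}{2}\eta(t)u(t,l-1,\xi)$ and $-\tfrac{Kl}{2}\conj{\eta(t)}u(t,l+1,\xi)$, paired against $h(t,l,\xi)$. For $l=1$ the factor $u(t,0,\xi)$ coincides with $\hat{g}(\xi)$ by \Cref{thm:conservation-velocity-marginal}, which explains why the first mode is singled out in \Cref{eq:kuramoto-fourier-weak}. For the free-transport piece I would use $il\omega\,e^{i\xi\omega}=l\partial_\xi e^{i\xi\omega}$ and integrate by parts in $\xi$ against $h$; boundary terms vanish by the compact support of $h$ in $\xi$, producing the contribution $\sum_l l\,\partial_\xi h(t,l,\xi)\,u(t,l,\xi)$. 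Rearranging delivers \Cref{eq:kuramoto-fourier-weak}.

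The computation is essentially bookkeeping: because $h$ is compactly supported in $(t,\xi)$, vanishes outside finitely many $l$, and is tested against probability measures, every Fubini step and integration by parts is routine. The only real care required is the case-split between $l=1$ (where $u(t,l-1,\xi)=\hat g(\xi)$) and $l\ge 2$; I do not expect a serious analytic obstacle beyond this.
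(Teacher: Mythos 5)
Your proposal is correct in substance but takes a genuinely different route from the paper. The paper's proof is very short: it gets boundedness and continuity in $\xi$ from the fact that each $\rho_t$ is a probability measure, continuity in $t$ from weak continuity, and then obtains \Cref{eq:kuramoto-fourier-weak} by transferring the physical-space weak formulation \Cref{eq:kuramoto-weak} to Fourier variables via Plancherel's formula (first for Schwartz test functions, then by density for $C^1_0$), using \Cref{thm:conservation-velocity-marginal} to identify the zeroth mode with $\hat g$. You instead work on the Lagrangian side: you substitute the trajectory representation $\rho_t=(T_{t,0}[\rho_\cdot])_*\rho_{in}$ from \Cref{thm:kuramoto-trajectories} into the pairing with $h$, integrate by parts in $t$ along characteristics, split $V$ into the transport part $\omega$ and the interaction part, and convert $il\omega\,e^{i\xi\omega}=l\partial_\xi e^{i\xi\omega}$ into the $-l\partial_\xi h$ term by integration by parts in $\xi$. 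Your route is more explicit and makes the case split between $l=1$ (where $u(t,0,\xi)=\hat g(\xi)$) and $l\ge2$ transparent; the paper's route is shorter but hides the analogous bookkeeping inside the Plancherel/density step. Your treatment of joint continuity in $(t,\xi)$ via tightness of $g$ is actually more careful than the paper's one-line remark.

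One point deserves more care than your write-up gives it: your blanket justification of Fubini ``because the integrand against $\rho_{in}$ is bounded by $1$'' is fine for the $\partial_t h$ pairing, but after the integration by parts in $t$ the transport contribution carries the unbounded factor $\omega$ from $\dot\theta=V$, and $\int_\Gamma|\omega|\,\rho_{in}(\dd\theta_0,\dd\omega)$ need not be finite for a general $g\in\mathcal{M}(\R)$. The iterated integral is only controlled after you perform the $\xi$-integration by parts first, since $\int il\omega\,e^{i\xi\omega}h(t,l,\xi)\,\dd\xi=-l\int e^{i\xi\omega}\partial_\xi h(t,l,\xi)\,\dd\xi$ is bounded uniformly in $\omega$. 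So you should either fix the order of integration ($\xi$ first, then $t$, then $\rho_{in}$) throughout, or truncate in $\omega$ and pass to the limit. This is a repairable ordering issue, not a gap in the idea, and the identity you invoke is exactly the mechanism that resolves it.
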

\begin{proof}
  For every $t \in \R^+$, the distribution $\rho_t$ is a probability
  measure, so that $u(t,l,\xi)$ is bounded by $1$ and continuous with
  respect to $\xi$. Since $\rho_{\cdot}$ is weakly continuous, $u$ is
  also continuous with respect to time.

  By \Cref{thm:conservation-velocity-marginal} the velocity marginal
  is always $g$ so that Plancherel's formula shows that
  \Cref{eq:kuramoto-fourier-weak} holds for all
  $h \in \mathscr{S}(\R^+ \times \N \times \R)$. By density it then
  holds for all $h \in C^1_0(\R^+ \times \N \times \R)$.
\end{proof}

The free transport is a well-posed linear problem, so that Duhamel's
principle on the free transport holds.
\begin{lemma}
  \label{thm:kuramoto-fourier-duhamel-free}
  Let $u_{in} \in C(\N \times \R)$, $\hat{g} \in C(\R)$ and
  $u \in C(\R^+ \times \N \times \R)$. For $t \in \R^+$ and
  $\xi \in \R$ define
  \begin{equation*}
    v(t,1,\xi) = u_{in}(1,\xi+t)
    + \frac{K}{2} \int_0^t
    \left[ \eta(s) \hat{g}(\xi+(t-s))
      - \conj{\eta(s)} u(s,2,\xi+(t-s)) \right ] \dd s
  \end{equation*}
  and for $l \ge 2$
  \begin{align*}
    v(t,l,\xi) &= u_{in}(l,\xi+lt) \\
    &+ \frac{Kl}{2} \int_0^t
    \left[ \eta(s) u(s,l-1,\xi+l(t-s))
      - \conj{\eta(s)} u(s,l+1,\xi+l(t-s)) \right ] \dd s
  \end{align*}
  where $\eta(t) = u(t,1,0)$. If $u$ satisfies
  \Cref{eq:kuramoto-fourier-weak} for all
  $h \in C^1_0(\R^+ \times \N \times \R)$, then $u \equiv v$.
\end{lemma}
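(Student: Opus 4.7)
The plan is to straighten the characteristics of the free transport by a change of variables and thereby reduce the statement to the familiar fact that a continuous function whose distributional time derivative equals a continuous function is the pointwise time integral of that derivative.

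First, it suffices to prove $u(t, l_0, \xi) = v(t, l_0, \xi)$ for each fixed $l_0 \ge 1$. To this end, I restrict \Cref{eq:kuramoto-fourier-weak} to test functions $h$ that are nonzero only on the spatial mode $l_0$, namely $h(t, l, \xi) = k(t, \xi)$ for $l = l_0$ and $h = 0$ otherwise, with $k \in C^1_0(\R^+ \times \R)$. Denote by $F_{l_0}(t, \xi)$ the forcing acting on the $l_0$-th mode, so that $F_1(t, \xi) = \frac{K}{2}\bigl[\eta(t)\,\hat{g}(\xi) - \conj{\eta(t)}\, u(t, 2, \xi)\bigr]$ and, for $l_0 \ge 2$, $F_{l_0}(t, \xi) = \frac{K l_0}{2}\bigl[\eta(t)\, u(t, l_0-1, \xi) - \conj{\eta(t)}\, u(t, l_0+1, \xi)\bigr]$. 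The weak equation becomes
\begin{equation*}
  0 = \int_\R u_{in}(l_0, \xi)\, k(0, \xi)\, \dd \xi + \int_{\R^+} \int_\R \bigl[F_{l_0}(t, \xi)\, k(t, \xi) + u(t, l_0, \xi)\, (\partial_t k - l_0 \partial_\xi k)(t, \xi)\bigr]\, \dd \xi\, \dd t.
\end{equation*}

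Next, I change variables via $\tilde{u}(t, \xi) = u(t, l_0, \xi - l_0 t)$, $\tilde{F}(t, \xi) = F_{l_0}(t, \xi - l_0 t)$ and $\tilde{k}(t, \xi) = k(t, \xi - l_0 t)$. The Jacobian is unity and a direct computation gives $\partial_t \tilde{k}(t, \xi) = (\partial_t k - l_0 \partial_\xi k)(t, \xi - l_0 t)$, so the equation rewrites as
\begin{equation*}
  0 = \int_\R u_{in}(l_0, \xi)\, \tilde{k}(0, \xi)\, \dd \xi + \int_{\R^+} \int_\R \bigl[\tilde{F}(t, \xi)\, \tilde{k}(t, \xi) + \tilde{u}(t, \xi)\, \partial_t \tilde{k}(t, \xi)\bigr]\, \dd \xi\, \dd t.
\end{equation*}
Since $\tilde{k}$ ranges over $C^1_0(\R^+ \times \R)$ as $k$ does, this is the weak form of the pure time ODE $\partial_t \tilde{u} = \tilde{F}$ with initial datum $\tilde{u}(0, \cdot) = u_{in}(l_0, \cdot)$, in which $\xi$ plays the role of a parameter.

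Finally, by \Cref{thm:fourier-solution} the function $u$ is continuous with $|u| \le 1$, and $\hat{g}$ is continuous, so $\tilde{u}$ and $\tilde{F}$ are continuous and locally bounded on $\R^+ \times \R$. A standard mollification argument — choosing $\tilde{k}(t, \xi) = \chi_\varepsilon(t)\, \phi_\varepsilon(\xi - \xi_0)$, where $\chi_\varepsilon$ is a smooth approximation of the indicator of $[0, t_0]$ and $\phi_\varepsilon$ is a mollifier converging to the Dirac mass at $0$, then letting $\varepsilon \to 0$ — yields the pointwise identity $\tilde{u}(t_0, \xi_0) = u_{in}(l_0, \xi_0) + \int_0^{t_0} \tilde{F}(s, \xi_0)\, \dd s$. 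Undoing the change of variables by writing $\xi_0 = \xi + l_0 t_0$, so that $\xi_0 - l_0 s = \xi + l_0(t_0 - s)$, produces exactly the Duhamel formula defining $v(t_0, l_0, \xi)$. There is no genuine obstacle; the only care required is to verify that the mollification passes to the limit, which is routine given the continuity and local boundedness just noted.
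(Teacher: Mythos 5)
Your proof is correct and takes essentially the same route as the paper, which simply observes that, treating the interaction terms as a forcing, the free transport problem has a unique weak solution given by the Duhamel formula $v$; you have merely filled in the characteristics change of variables and the mollification details. One small remark: the appeal to \Cref{thm:fourier-solution} for the bound $|u|\le 1$ is unnecessary (and not available under the lemma's hypotheses, which only assume $u$ continuous), but continuity alone already gives the local boundedness your limiting argument needs.
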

\begin{proof}
  In the weak formulation \Cref{eq:kuramoto-fourier-weak} consider all
  terms except the free transport as forcing. Then the problem has a
  unique solution given by $v$.
\end{proof}

For the stability and bifurcation result we construct solutions $u$ to
\Cref{eq:kuramoto-fourier-weak}. By the following theorem these
correspond to the Fourier transform of the mean-field solution.

\begin{thm}
  \label{thm:fourier-uniqueness}
  Let $u_{in} \in C(\N \times \R)$ and $\hat{g} \in C(\R)$. Suppose
  $u, \tilde{u} \in C(\R^+ \times \N \times \R)$ satisfy
  \Cref{eq:kuramoto-fourier-weak} for every
  $h \in C^1_0(\R^+ \times \N \times \R)$. If for $T \in \R^+$ there
  exists $M \in \R^+$ with $\alpha,\beta \ge 0$ satisfying
  \begin{equation*}
    \sup_{\xi \in \R} |\hat{g}(\xi)| \min(1,e^{a\xi}) \le M
  \end{equation*}
  and
  \begin{equation*}
    \sup_{t \in [0,T]} \sup_{l \in N} \sup_{\xi \in \R}
    e^{-\beta l} \min(1,e^{a\xi})
    \max(|u(t,l,\xi)|, |\tilde{u}(t,l,\xi)|) \le M,
  \end{equation*}
  then $u(t,\cdot,\cdot) \equiv \tilde{u}(t,\cdot,\cdot)$ for
  $t \in [0,T]$.
\end{thm}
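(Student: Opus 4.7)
The plan is to apply the Duhamel lemma to convert the weak formulation into an integral equation, and then show the difference $w = u - \tilde u$ vanishes by iterating Duhamel and applying Gronwall, exploiting $w(0,\cdot,\cdot) \equiv 0$.

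By the previous lemma, both $u$ and $\tilde u$ coincide with their Duhamel representations along characteristics; subtracting yields an integral equation for $w$ in which $\delta\eta(s) := w(s,1,0)$ enters linearly, together with shifts $w(s, l\pm 1, \xi + l(t-s))$ at neighbouring modes and bounded coefficients from $\hat g$, $u$, and $\tilde u$. The key analytic ingredient is the pointwise inequality
\[
\min(1, e^{a\xi}) \cdot \max(1, e^{-a(\xi + l(t-s))}) \le 1 \qquad (0 \le s \le t,\ l \ge 1),
\]
which converts the hypotheses along characteristics into bounds such as $\min(1, e^{a\xi})\,|u(s, l, \xi + l(t-s))| \le M e^{\beta l}$, and analogously for $\hat g$, $\tilde u$, and $w$.

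Next I would iterate the Duhamel identity for $w$ into itself $n$ times. This expresses $w(t,l,\xi)$ as a sum of iterated integrals of $\delta\eta$ against bounded kernels, plus a remainder involving $w$ at modes in $\{l-n, l-n+2, \dots, l+n\}$. Collecting the prefactor $K(l+j)/2$ at each step, the $n$-fold time integration giving $T_0^n/n!$, the $2^n$ branching between $l-1$ and $l+1$, and the a priori bound $\min(1,e^{a\cdot})|w(s, l+n, \cdot)| \le 2 M e^{\beta(l+n)}$ on the tail, the remainder is of order $2 M e^{\beta l}\,(K T_0 e^\beta (l+n))^n/n!$, which tends to $0$ as $n \to \infty$ for $T_0$ small enough (by Stirling, $T_0 < 1/(K e^{\beta+1})$ suffices). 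A similar estimate bounds the $\delta\eta$ contributions at each depth, whose sum converges under the same smallness condition. Evaluating the limit at $l = 1$, $\xi = 0$ leaves a Volterra-type inequality $|\delta\eta(t)| \le \int_0^t K^*(t,s)\,|\delta\eta(s)|\,ds$ with bounded kernel $K^*$ on $[0, T_0]$, and Gronwall yields $\delta\eta \equiv 0$ on $[0, T_0]$. The integral equation for $w$ then becomes homogeneous linear in $w$, and the same iteration (now without any source) gives $w \equiv 0$ on $[0, T_0]$. Since $T_0$ depends only on $K$, $\beta$, $M$, and $a$, iterating across $[T_0, 2T_0], \ldots$ covers all of $[0, T]$.

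The main obstacle is the factor $Kl/2$ in the interaction, which precludes a naive uniform-in-$l$ weighted Gronwall estimate directly on the integral equation; the iterated Duhamel expansion circumvents this by trading the growing mode prefactors against the factorial gain $1/n!$ from nested time integrations, so that the smallness of $T_0$ forces both the tail remainder and the combinatorial series to vanish.
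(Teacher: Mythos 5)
Your proposal is correct in outline but takes a genuinely different route from the paper. The paper's proof introduces the time- and mode-dependent weight $e^{-\gamma t l}$ in the difference functional $e(t) = \sup_{l,\xi} e^{-\gamma t l} e^{-\beta l} \min(1,e^{a\xi}) |u(t,l,\xi)-\tilde{u}(t,l,\xi)|$: the troublesome factor $l$ from the interaction is then absorbed uniformly in $l$ via $l\int_0^t e^{-\gamma l(t-s)}\, \dd s \le 1/\gamma$, so a \emph{single} application of the Duhamel formula already gives $e(t) \le (2KM/\gamma) e^{2\beta+\gamma t} \sup_{[0,t]} e$, which is a strict contraction for $\gamma$ large and $t \le t^*$ small; no iteration, no factorial bookkeeping, and no separate Volterra/Gronwall step for $\delta\eta$ are needed. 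Your iterated-Duhamel expansion, which beats the mode prefactors $\prod_{i\le n}(l+i) \le (l+n)^n \sim n!\, e^{n+l}$ with the $1/n!$ from the time simplex, is a valid (if heavier) alternative: the characteristic inequality $\min(1,e^{a\xi})\max(1,e^{-a(\xi+l(t-s))}) \le 1$ is right, the separate treatment of the $\delta\eta$ source terms followed by Gronwall is sound, and the time step is uniform so the continuation to $[0,T]$ works. One quantitative slip: each level of the expansion also carries the coefficient $|\tilde{\eta}(s)| \le M e^{\beta}$ (not just $e^{\beta}$) on the surviving $w$-branch, so the remainder is of order $2Me^{\beta l}\bigl(KMT_0 e^{2\beta}(l+n)\bigr)^n/n!$ and the smallness condition should read $T_0 < 1/(KMe^{2\beta+1})$; this does not affect the validity of the argument. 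What the paper's weight trick buys is brevity and a cleaner constant; what your expansion buys is that it makes the mechanism (mode growth versus factorial time gain) completely explicit.
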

\begin{proof}
  For $\gamma > 0$ consider the difference measure $e$ defined by
  \begin{equation*}
    e(t) = \sup_{l\in\N} \sup_{\xi\in\R} e^{-\gamma t l}
    e^{-\beta l} \min(1,e^{a\xi})
    |u(t,l,\xi) - \tilde{u}(t,l,\xi)|.
  \end{equation*}
  Then by \Cref{thm:kuramoto-fourier-duhamel-free} for $t \in [0,T]$,
  $\l \in \N$ and $\xi \in \R$ we have
  \begin{align*}
    |u(t,l,\xi) - \tilde{u}(t,l,\xi)|
    e^{-\gamma t l} e^{-\beta l} \min(1,e^{a\xi})
    &\le 2KMl \int_0^t e^{2\beta+\gamma s} e(s) e^{-\gamma l(t-s)} \dd
      s \\
    &\le \frac{2KM}{\gamma} e^{2\beta + \gamma t}
    \sup_{s \in [0,t]} e(s).
  \end{align*}
  Choose $\gamma > 0$ and $t^* > 0$ so that
  $(2KM/\gamma) e^{2\beta + \gamma t^*} < 1/2$. Then the above shows
  for $t \in [0,t^*]$
  \begin{equation*}
    e(t) \le \frac{1}{2} \sup_{s \in [0,t^*]} e(s).
  \end{equation*}
  Therefore, we must have $e(t) = 0$ for $t \le t^*$, i.e. the
  solutions agree. Since we can repeat the argument in steps of $t^*$
  up to time $T$, this shows uniqueness up to time $T$.
\end{proof}

By \Cref{thm:fourier-solution} the Fourier transform $u$ of a solution
to the Kuramoto equation satisfies the assumptions of the previous
uniqueness theorem for every $\alpha$ and $\beta$.

\begin{thm}
  \label{thm:fourier-restriction}
  The restriction of the Fourier transform $u$ to
  $C(\R^+ \times \N \times \R^+)$, i.e. $\xi \ge 0$, satisfies an
  appropriate weak PDE (\Cref{thm:fourier-solution}) to which the
  Duhamel formula (\Cref{thm:kuramoto-fourier-duhamel-free}) applies
  and the uniqueness holds for the restriction $\xi \ge 0$ in
  \Cref{thm:fourier-uniqueness}.
\end{thm}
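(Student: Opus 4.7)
The plan is to exploit the one-way (leftward) propagation already emphasised after \Cref{eq:evolution-double-fourier}: along a free-transport characteristic $(t,l,\xi)$ with $\xi \ge 0$, one only samples values at $\xi + l(t-s) \ge 0$, and the interaction couples to $u(t,l\pm 1,\xi)$ at the \emph{same} $\xi$ together with the boundary value $\eta(t)=u(t,1,0)$. Hence the half-space $\{\xi \ge 0\}$ is its own domain of dependence and each of the three earlier results transfers to the restriction without invoking any information from $\xi<0$.

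For the weak PDE, I would start from a test function $h \in C^1_0(\R^+ \times \N \times \R^+)$ and mollify it in $\xi$ by a smooth cutoff $\chi_\epsilon$ equal to $1$ for $\xi \ge 2\epsilon$ and vanishing for $\xi \le \epsilon$. Then $h\,\chi_\epsilon$, extended by zero to $\xi<0$, lies in $C^1_0(\R^+ \times \N \times \R)$ and is admissible in \Cref{eq:kuramoto-fourier-weak}. Passing $\epsilon \to 0^+$ and using the $\xi$-continuity of $u$ supplied by \Cref{thm:fourier-solution}, the term produced by $\chi_\epsilon'$ converges to $-\sum_{l\ge 1} l \int_0^\infty u(t,l,0)\, h(t,l,0)\,\dd t$. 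This yields the appropriate half-space weak formulation
\begin{equation*}
\begin{split}
0 &= \sum_{l \in \N} \int_0^\infty u_{in}(l,\xi)\, h(0,l,\xi)\,\dd\xi
\;-\; \sum_{l \ge 1} l \int_{\R^+} u(t,l,0)\, h(t,l,0)\,\dd t \\
&\quad + \int_{\R^+} \int_0^\infty \Bigl[ (\text{interaction terms of \Cref{eq:kuramoto-fourier-weak}})\, h
\;+\; u(t,l,\xi)\,\bigl( \partial_t h - l\,\partial_\xi h \bigr) \Bigr] \,\dd\xi\,\dd t,
\end{split}
\end{equation*}
the extra boundary contribution encoding the outflow at $\xi=0$ (no inflow condition is needed, consistent with characteristics moving to the left).

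For the Duhamel step, I simply observe that the formulas in \Cref{thm:kuramoto-fourier-duhamel-free}, when evaluated at $\xi \ge 0$, only sample $u$ at arguments $\xi + l(t-s) \ge 0$ and at $\eta(s)=u(s,1,0)$, so they close on the half-space and hold there automatically because they hold globally. Conversely, for any candidate $v$ satisfying the half-space weak formulation above, I would re-run the argument of \Cref{thm:kuramoto-fourier-duhamel-free}, treating the interaction and boundary terms as a given forcing and invoking uniqueness for the linear half-space transport equation. For the uniqueness statement I would copy the contraction scheme of \Cref{thm:fourier-uniqueness} with
\begin{equation*}
e(t) = \sup_{l \in \N}\, \sup_{\xi \ge 0}\, e^{-\gamma t l}\, e^{-\beta l}\, \min(1,e^{a\xi})\, |u(t,l,\xi) - \tilde u(t,l,\xi)|,
\end{equation*}
the point being that the Duhamel-based bound on $|u(t,l,\xi)-\tilde u(t,l,\xi)|$ only probes the difference at $\xi \ge 0$ and at $\xi=0$, so the same factor $\tfrac12$ contraction over a small $[0,t^*]$ follows, and iteration covers $[0,T]$.

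I do not anticipate a serious obstacle; the only mildly delicate point is the passage $\epsilon \to 0$ in the cutoff step, which requires an interchange of limit and integral together with the boundary trace $u(t,l,0)$, both of which are furnished by the $\xi$-continuity of $u$ from \Cref{thm:fourier-solution}. Everything else is a faithful transcription of the arguments underlying \Cref{thm:fourier-solution}, \Cref{thm:kuramoto-fourier-duhamel-free}, and \Cref{thm:fourier-uniqueness}, legitimised by the leftward transport in $\xi$.
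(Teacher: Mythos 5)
Your proposal is correct and follows essentially the same route as the paper: the published proof is a three-sentence observation that characteristics of the free transport never enter the region $\{\xi \ge 0\}$ and that the nonlinear interaction there is determined by the region itself, so the weak PDE, the Duhamel formula and the uniqueness argument all restrict verbatim. Your cutoff derivation of the half-space weak formulation, including the outflow boundary term $-\sum_{l\ge 1} l \int_{\R^+} u(t,l,0)\,h(t,l,0)\,\dd t$, simply makes explicit what the paper leaves implicit as ``the appropriate restriction of the weak PDE''.
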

\begin{proof}
  Characteristics of the full equation are never entering the
  restricted region and the nonlinear interaction in the restricted
  region is determined by the region itself. Hence the restriction
  satisfies the appropriate restriction of the weak PDE
  \Cref{thm:fourier-solution} whose solution is given by the Duhamel
  formula \Cref{thm:kuramoto-fourier-duhamel-free}. The proof of the
  uniqueness theorem holds as before when restricting $\xi$ to $\R^+$
  everywhere.
\end{proof}

\section{Global stability by energy method}
\label{sec:stability-energy}

The basic idea is to differentiate the energy functional in time and
to note that the nonlinear terms cancel, because they are
skew-Hermitian. However, the Fourier transform $u$ does not need to
be differentiable. Therefore, we use an approximation procedure.

\begin{lemma}
  \label{thm:energy-differentiation}
  Let $\phi \in C^1(\R^+)$ be a bounded increasing weight with
  $\phi(0) = 1$ whose derivative $\phi'$ is bounded and satisfies
  $\phi'(\xi) > 0$ for $\xi \in \R^+$.

  For the Fourier transform $u \in C(\R^+ \times \N \times \R)$ of a
  solution to the Kuramoto equation define the functional $I(t)$ by
  \Cref{eq:energy-functional}. Then for all $t \in \R^+$
  \begin{equation*}
    I(t) \le I(0) + (\alpha-1) \int_0^t |\eta(s)|^2 \dd s
  \end{equation*}
  where
  \begin{equation*}
    \alpha =
      \frac{K^2}{4}
      \int_{\xi=0}^{\infty} \frac{|\hat{g}(\xi)|^2}{\phi'(\xi)}
      \phi^2(\xi) \dd \xi.
  \end{equation*}
\end{lemma}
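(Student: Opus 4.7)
The strategy is a direct energy estimate exploiting the skew--Hermitian structure of the ladder couplings in \eqref{eq:evolution-double-fourier}. I would first carry out the calculation formally, assuming enough regularity and decay at $\xi = \infty$ to differentiate under the integral and integrate by parts, and then justify the result for general continuous $u$ by approximation of the initial data.

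Working formally, differentiating $I(t)$ in $t$ and substituting \eqref{eq:evolution-double-fourier}, the transport contribution in each mode $l\ge 1$ becomes $\tfrac{2}{l}\Re(\bar u\cdot l\,\partial_\xi u)=\partial_\xi|u(t,l,\xi)|^2$; integrating by parts on $[0,\infty)$ against $\phi$ and using $\phi(0)=1$ produces $-|u(t,l,0)|^2-\int_0^\infty|u(t,l,\xi)|^2\phi'(\xi)\,d\xi$ per mode, with the $l=1$ boundary piece giving exactly $-|\eta(t)|^2$. The quadratic interaction terms split into the gain $K\Re(\bar u(t,l)\,\eta\, u(t,l-1))$ for $l\ge 2$ and the loss $-K\Re(\bar u(t,l)\,\bar\eta\, u(t,l+1))$ for $l\ge 1$; a reindex $l\mapsto l+1$ in the gain sum together with $\Re(\bar u(t,l+1)\eta u(t,l))=\Re(u(t,l+1)\bar\eta\bar u(t,l))$ shows that these cancel exactly, leaving only the inhomogeneous forcing at $l=1$,
\[
  K\int_0^\infty \Re\bigl(\bar u(t,1,\xi)\,\eta(t)\,\hat g(\xi)\bigr)\,\phi(\xi)\,d\xi.
\]
Splitting $\phi=\sqrt{\phi'}\cdot\phi/\sqrt{\phi'}$ and applying Cauchy--Schwarz in $\xi$ bounds this term by $K|\eta(t)|\,(\int_0^\infty|u(t,1)|^2\phi'\,d\xi)^{1/2}\,(\int_0^\infty|\hat g|^2\phi^2/\phi'\,d\xi)^{1/2}$; then Young's inequality $ab\le a^2/4+b^2$ with $a=K|\eta(t)|(\int|\hat g|^2\phi^2/\phi'\,d\xi)^{1/2}$ and $b=(\int|u(t,1)|^2\phi'\,d\xi)^{1/2}$ produces precisely $\alpha|\eta(t)|^2+\int_0^\infty|u(t,1)|^2\phi'\,d\xi$. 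The second piece cancels the $l=1$ term of the negative weighted integrals, every remaining contribution (boundary values at $\xi=0$ for $l\ge 2$ and higher--mode weighted integrals) is non--positive, and one obtains $\tfrac{d}{dt}I(t)\le (\alpha-1)|\eta(t)|^2$; integrating in time yields the claim.

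The principal obstacle is that the Fourier transform $u$ of a general measure--valued solution is only continuous and bounded by $1$; it need not be differentiable in $t$ or $\xi$, and $|u|^2\phi$ need not vanish at $\xi\to\infty$, so neither the pointwise differentiation of the series defining $I$ nor the integration by parts is a priori legitimate. I would resolve this by approximation of the initial data: convolving $\rho_{in}$ with a smooth mass--preserving mollifier on $\Gamma$ produces $\rho_{in}^{(\epsilon)}$ with smooth density of compact $\omega$--support, whose Fourier transform $u_{in}^{(\epsilon)}$ is smooth, decays rapidly in $\xi$, and is uniformly summable in $l$; the corresponding solution $u^{(\epsilon)}$ is then a classical solution of \eqref{eq:evolution-double-fourier} for which every step of the formal calculation is rigorous, yielding $I^{(\epsilon)}(t)+(1-\alpha^{(\epsilon)})\int_0^t|\eta^{(\epsilon)}(s)|^2\,ds\le I^{(\epsilon)}(0)$. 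The mollification can be arranged so that $\hat g^{(\epsilon)}\to\hat g$ in $L^2(\phi^2/\phi'\,d\xi)$ and hence $\alpha^{(\epsilon)}\to\alpha$, while \Cref{thm:fourier-uniqueness} gives pointwise convergence $u^{(\epsilon)}\to u$; Fatou's lemma then yields $I(t)\le\liminf I^{(\epsilon)}(t)$ and $\int_0^t|\eta(s)|^2\,ds\le\liminf\int_0^t|\eta^{(\epsilon)}(s)|^2\,ds$, while $I^{(\epsilon)}(0)\to I(0)$ by choosing the mollifier compatible with the weighted norm, and the stated inequality follows in the limit $\epsilon\to 0$.
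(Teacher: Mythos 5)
Your formal computation is the same as the paper's: the skew--Hermitian cancellation, the boundary term $-|\eta(t)|^2$ from integration by parts in the first mode, and the bound on the forcing term (the paper completes the square where you use Cauchy--Schwarz plus Young, which yields the identical constant $\alpha$). Where you genuinely diverge is in how the computation is made rigorous. The paper does \emph{not} regularise the initial data: it mollifies the given solution $u$ in $\xi$ only, which commutes with the equation because the transport and the coupling coefficients are $\xi$-independent (only $\hat g$ gets mollified, while $\eta(t)=u(t,1,0)$ and the velocity marginal are unchanged), and it damps the functional with factors $e^{-\epsilon l}e^{-\zeta\xi}$ so that all series and integrals converge; the price is that the skew--Hermitian cancellation is only approximate, with an error of order $(1-e^{-\epsilon})$ absorbed by Gronwall, and the limits $\delta\to0$, then $\epsilon\to0$ with $\zeta=\epsilon^2$ are clean dominated/monotone convergence applied to the \emph{same} solution. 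Your route instead changes the solution, and therefore changes $g$, $\hat g$, $\eta$, $\alpha$ and $I(0)$ simultaneously, and you must recover all of them in the limit.

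Two points in your plan need repair. First, convolving $\rho_{in}$ with a compactly supported mollifier does \emph{not} produce compact $\omega$-support (nor even a first moment in $\omega$ in general, e.g.\ for a Cauchy marginal), so the claimed classical $\xi$-differentiability of $u^{(\epsilon)}$ requires either an additional truncation in $\omega$ or an argument via directional derivatives along characteristics; and the convergence $u^{(\epsilon)}\to u$ is a continuous-dependence statement for the nonlinear equation, which follows from the well-posedness theorem (Neunzert's framework), not from \Cref{thm:fourier-uniqueness}, which is only a uniqueness result. Second, and more seriously, the convergences $\alpha^{(\epsilon)}\to\alpha$ and $\limsup_\epsilon I^{(\epsilon)}(0)\le I(0)$ are not automatic: the weight $\phi^2/\phi'$ appearing in $\alpha$ is not controlled by the hypotheses (only $\phi$ and $\phi'$ are bounded, and $\phi'$ may decay to zero, so $\phi^2/\phi'$ may blow up), and once you truncate in $\omega$ the relation between $\hat g^{(\epsilon)}$ and $\hat g$ is a convolution in $\xi$, which destroys the pointwise domination $|\hat g^{(\epsilon)}|\le|\hat g|$ that dominated convergence would need; a pure convolution mollifier does give $|\hat g^{(\epsilon)}|\le|\hat g|$ and $|u_{in}^{(\epsilon)}|\le|u_{in}|$, but then you are back to the regularity problem of the first point. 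This circularity is the concrete gap: you must either accept a merely directional (characteristic) derivative and justify the term-by-term differentiation and integration by parts with the damping factors the paper introduces, or quantify how the mollification perturbs the weighted integrals defining $\alpha$ and $I(0)$. In the application to \Cref{thm:energy-stability} the weight is built from $|\hat g|$ itself and $\phi^2/\phi'$ is genuinely unbounded where $\hat g$ is small, so this is not a vacuous concern.
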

\begin{proof}
  In order to overcome the non-differentiability, let
  $\chi \in C^\infty_c(\R)$ be a non-negative function with
  $\int_{x \in \R} \chi(x) \dd x = 1$ and let
  $\chi_{\delta}(x) = \delta^{-1}\chi(x/\delta)$ for $\delta >
  0$. Define the mollified $u_{\delta}$ by
  \begin{equation*}
    u_{\delta}(t,l,\xi) = \int_{x \in \R} \chi_{\delta}(\xi-x)
    u(t,l,x) \dd x.
  \end{equation*}
  Since $u$ is bounded by $1$, also $u_{\delta}$ is bounded by $1$ and
  has a bounded derivative with respect to $\xi$. Moreover,
  $\|u_{\delta}(t,l,\cdot)\|_2 \le \|u(t,l,\cdot)\|_2$.

  By \Cref{thm:kuramoto-fourier-duhamel-free}
  \begin{align*}
    u_{\delta}(t,1,\xi)
    &= u_{in,\delta}(1,\xi+l) \\
    &+ \frac{K}{2}
    \int_{0}^{t} \left[\eta(s) \hat{g}_{\delta}(\xi+t-s)
      - \conj{\eta(s)} u_{\delta}(s,2,\xi+t-s)\right] \dd s
  \end{align*}
  and for $l \ge 2$
  \begin{align*}
    u_{\delta}(t,l,\xi)
    &= u_{in,\delta}(l,\xi+tl) \\
    &+ \frac{Kl}{2}
    \int_{0}^{t} \left[\eta(s) u_{\delta}(s,l-1,\xi+(t-s)l)
      - \conj{\eta(s)} u_{\delta}(s,l+1,\xi+(t-s)l)\right] \dd s,
  \end{align*}
  where $\hat{g}_{\delta}$ and $u_{in,\delta}$ are the mollifications
  of $\hat{g}$ and $u_{in}$, respectively.  Hence $u_{\delta}$ is also
  continuously differentiable with respect to $t \in \R^+$ and
  satisfies classically
  \begin{equation*}
    \partial_t u_{\delta}(t,1,\xi)
    = \partial_{\xi} u_{\delta}(t,1,\xi)
    + \frac{K}{2} \left[ \eta(t) \hat{g}_{\delta}(\xi)
      - \conj{\eta(t)} u_{\delta}(t,2,\xi) \right]
  \end{equation*}
  and for $l \ge 2$
  \begin{equation*}
    \partial_t u_{\delta}(t,l,\xi)
    = l \partial_{\xi} u_{\delta}(t,l,\xi)
    + \frac{Kl}{2} \left[ \eta(t) u_{\delta}(t,l-1,\xi)
      - \conj{\eta(t)} u_{\delta}(t,l+1,\xi) \right].
  \end{equation*}
  For $\epsilon,\zeta > 0$ define
  \begin{equation*}
    I_{\delta,\epsilon,\zeta}(t) = \sum_{l=1}^{\infty}
    \int_{\xi=0}^{\infty} \frac{1}{l} |u_{\delta}(t,l,\xi)|^2
    \phi(\xi)
    e^{-\epsilon l} e^{-\zeta \xi} \dd \xi.
  \end{equation*}
  Since $u_{\delta}$ and $\partial_{\xi} u_{\delta}$ are bounded,
  $I_{\delta,\epsilon,\zeta}(t)$ is differentiable and we can
  differentiate under the integral sign to find
  \begin{align*}
    \partial_t I_{\delta,\epsilon,\zeta}(t)
    &= - \sum_{l=1}^{\infty}
      |u_{\delta}(t,l,0)|^2 e^{-\epsilon l}
      - \sum_{l=1}^{\infty} \int_{\xi=0}^{\infty}
      |u_{\delta}(t,l,\xi)|^2 \phi'(\xi)
      e^{-\epsilon l} e^{-\zeta \xi} \dd \xi \\
    &\quad + \zeta \sum_{l=1}^{\infty} \int_{\xi=0}^{\infty}
      |u_{\delta}(t,l,\xi)|^2 \phi(\xi)
      e^{-\epsilon l} e^{-\zeta \xi} \dd \xi \\
    &\quad + \int_{\xi=0}^{\infty}
      K \Re\left[\conj{\eta(t)} u_{\delta}(t,1,\xi) \conj{\hat{g}_{\delta}(\xi)}\right]
      e^{-\epsilon} e^{-\zeta \xi} \dd \xi \\
    &\quad + \sum_{l=1}^{\infty} \int_{\xi=0}^{\infty}
      K \Re\left[\eta(t)
      u_{\delta}(t,l,\xi)\conj{u_{\delta}(t,l+1,\xi)}\right]
      (e^{-\epsilon}-1) \phi(\xi) e^{-\epsilon l} e^{-\zeta \xi} \dd
      \xi \\
    &\le - |u_{\delta}(t,1,0)|^2 e^{-\epsilon}
      - \int_{\xi=0}^{\infty}
      |u_{\delta}(t,1,\xi)|^2 \phi'(\xi)
      e^{-\epsilon} e^{-\zeta \xi} \dd \xi\\
    &\quad + \int_{\xi=0}^{\infty}
      K \Re\left[\conj{\eta(t)} u_{\delta}(t,1,\xi) \conj{\hat{g}_{\delta}(\xi)}\right]
      e^{-\epsilon} e^{-\zeta \xi} \dd \xi \\
    &\quad + \left[\frac{\zeta}{\epsilon} + K |\eta(t)|
      \frac{(1-e^{\epsilon})}{\epsilon}\right] I_{\delta,\epsilon,\eta}(t)
  \end{align*}
  As $|\eta(t)| \le 1$, this shows by Gronwall's inequality for a
  fixed time $t$
  \begin{align*}
    &\exp\left(-\left[\frac{\zeta}{\epsilon} + K
      \frac{(1-e^{\epsilon})}{\epsilon}\right]t\right)
      I_{\delta,\epsilon,\eta}(t) \\
    &\le I_{\delta,\epsilon,\eta}(0)
      - e^{-\epsilon} \int_0^t
      |u_{\delta}(s,1,0)|^2 \dd s \\
    &\quad + e^{-\epsilon} \int_0^t \int_{\xi=0}^{\infty} \left(
      -|u_{\delta}(s,1,\xi)|^2 \phi'(\xi)
      +
      K \Re\left[\conj{\eta(s)} u_{\delta}(s,1,\xi)
      \conj{\hat{g}_{\delta}(\xi)}\right]
      \right) e^{-\zeta \xi} \dd \xi \dd s
  \end{align*}
  Taking $\delta \to 0$, we find by dominated convergence
  \begin{align*}
    &\exp\left(-\left[\frac{\zeta}{\epsilon} + K
      \frac{(1-e^{\epsilon})}{\epsilon}\right]t\right)
      I_{\epsilon,\eta}(t) \\
    &\le I_{\epsilon,\eta}(0)
      - e^{-\epsilon} \int_0^t
      |\eta(s)|^2 \dd s \\
    &\quad + e^{-\epsilon} \int_0^t \int_{\xi=0}^{\infty} \left(
      -|u(s,1,\xi)|^2 \phi'(\xi)
      +
      K \Re\left[\conj{\eta(s)} u(s,1,\xi)
      \conj{\hat{g}(\xi)}\right]
      \right) e^{-\zeta \xi} \dd \xi \dd s,
  \end{align*}
  where
  \begin{equation*}
    I_{\epsilon,\zeta}(t) = \sum_{l=1}^{\infty}
    \int_{\xi=0}^{\infty} \frac{1}{l} |u(t,l,\xi)|^2
    \phi(\xi)
    e^{-\epsilon l} e^{-\zeta \xi} \dd \xi.
  \end{equation*}
  The integral can be controlled as
  \begin{align*}
    \int_{\xi=0}^{\infty} &\left(
      -|u(s,1,\xi)|^2 \phi'(\xi)
      +
      K \Re\left[\conj{\eta(s)} u(s,1,\xi)
      \conj{\hat{g}(\xi)}\right]
      \right) e^{-\zeta \xi} \dd \xi \\
    &= - \int_{\xi=0}^{\infty}
      \left|u(s,1,\xi) - \frac{K\conj{\eta(s)}}{2}
      \frac{\conj{\hat{g}(\xi)}}{\phi'(\xi)} \phi(\xi) \right|^2 \phi'(\xi)
      e^{-\zeta\xi}
      \dd \xi \\
    &\qquad + |\eta(s)|^2 \frac{K^2}{4}
      \int_{\xi=0}^{\infty} \frac{|\hat{g}(\xi)|^2}{\phi'(\xi)}
      \phi^2(\xi) e^{-\zeta\xi} \dd \xi \\
    &\le \alpha |\eta(s)|^2.
  \end{align*}
  Hence, the previous bound shows
  \begin{equation*}
    \exp\left(-\left[\frac{\zeta}{\epsilon} + K
      \frac{(1-e^{\epsilon})}{\epsilon}\right]t\right)
      I_{\epsilon,\eta}(t)
    \le I_{\epsilon,\eta}(0)
      + (\alpha-1) e^{-\epsilon} \int_0^t
      |\eta(s)|^2 \dd s.
  \end{equation*}
  Now taking $\epsilon\to 0$ with $\zeta = \epsilon^2$ shows by
  monotone convergence the claimed result.
\end{proof}

The global stability result, \Cref{thm:energy-stability}, follows from
this lemma by finding a suitable weight $\phi$ such that $\alpha <
1$. For this we want to minimise the integral defining
$\alpha$. Formally, the Euler-Lagrange equation is
\begin{equation*}
  0 = |\hat{g}(\xi)| + \partial_\xi \left[ |\hat{g}(\xi)|
    \left(\frac{\phi(\xi)}{\phi'(\xi)}\right) \right].
\end{equation*}
With $\phi(0) = 1$ the solution formally is
\begin{equation*}
  \phi(\xi)
  = \frac{A}{A - \int_0^\xi |\hat{g}(\zeta)| \dd \zeta}
\end{equation*}
for a constant $A$. If $A > \int_0^\infty |\hat{g}(\xi)| \dd \xi$, this
indeed defines a bounded increasing weight and we find
\begin{equation*}
  \frac{K^2}{4} \int_0^\infty \frac{|\hat{g}(\xi)|^2}{\phi'(\xi)}
  \phi^2(\xi) \dd \xi
  = \frac{K^2}{4} A \int_0^\infty |\hat{g}(\xi)| \dd \xi.
\end{equation*}
Thus if $K < K_{ec}$, for a suitable $A$ we have the required control.

\begin{proof}[Proof of \Cref{thm:energy-stability}]
  \Cref{thm:energy-differentiation} requires the additional condition
  $\phi'(\xi) > 0$ for all $\xi \in \R^+$. Therefore, we slightly
  modify the weight found formally before.

  For $A > \int_0^\infty |\hat{g}(\xi)| \dd \xi$ and $\gamma > 0$ the
  integral
  \begin{equation*}
    \frac{K^2}{4} \int_0^\infty
    A \frac{|\hat{g}(\xi)|^2}{|\hat{g}(\xi)|+e^{-\gamma\xi}} \dd \xi
  \end{equation*}
  tends to
  \begin{equation*}
    \frac{K^2}{4}
    \left(\int_0^\infty |\hat{g}(\xi)| \dd \xi \right)^2
  \end{equation*}
  as $A \to \int_0^\infty |\hat{g}(\xi)| \dd \xi$ and
  $\gamma \to \infty$. Since $K < K_{ec}$, we can thus find
  $\bar{\gamma} > 0$ and
  $\bar{A} > \bar{\gamma}^{-1} + \int_0^\infty |\hat{g}(\xi)| \dd \xi$
  such that
  \begin{equation*}
    \frac{K^2}{4} \int_0^\infty
    \bar{A} \frac{|\hat{g}(\xi)|^2}{|\hat{g}(\xi)|+e^{-\bar{\gamma}\xi}} \dd \xi
    < 1.
  \end{equation*}
  With $\bar{A}$ and $\bar{\gamma}$ define the weight $\phi$ by
  \begin{equation*}
    \phi(\xi) = \frac{\bar{A}}{\bar{A} - \int_{0}^{\xi} \left(|\hat{g}(\zeta)| +
        e^{-\bar{\gamma}\zeta}\right) \dd \zeta}.
  \end{equation*}

  The integrand is continuous so that $\phi \in C^1(\R^+)$ and
  $\phi'(\xi) > 0$ for all $\xi \in \R^+$. Moreover, $\phi$ is
  bounded and $\phi(0)=1$.

  Thus $\phi$ is a valid weight for the theorem and by
  \Cref{thm:energy-differentiation} a solution satisfies the required
  bound
  \begin{equation*}
    I(t) + c \int_0^t |\eta(s)|^2 \dd s \le I(0)
  \end{equation*}
  with
  \begin{equation*}
    c = 1 - \alpha = 1 - \frac{K^2}{4} \int_0^\infty
    \bar{A} \frac{|\hat{g}(\xi)|^2}{|\hat{g}(\xi)|+e^{-\bar{\gamma}\xi}} \dd \xi
    > 0.
  \end{equation*}
  For the initial bound, use Plancherel to note
  \begin{equation*}
    I(0) \le \| \phi \|_{\infty} \sum_{l\ge1} \int_{\xi\in\R^+}
    |u_{in}(l,\xi)|^2 \dd \xi \le \| \phi \|_{\infty} \| f \|_2^2.
    \qedhere
  \end{equation*}
\end{proof}

\section{Stability of the linearised evolution}
\label{sec:linearised-stability}

As in the mean-field limit, the linear evolution given by
\Cref{eq:fourier-linear} is understood as PDE for continuous solutions
in the same weak sense, i.e. tested against $C^1_0$ functions.
\begin{proof}[Proof of \Cref{thm:linear-evolution}]
  The free transport has a unique weak solution, so that, as in
  \Cref{thm:kuramoto-fourier-duhamel-free}, Duhamel's formula implies
  \begin{equation}
    \label{eq:proof-linear-evolution-duhamel}
    (e^{tL} u_{in})(1,\xi) = u_{in}(1,\xi+t)
    + \frac{K}{2} \int_0^t
    (e^{sL} u_{in})(1,0) \hat{g}(\xi+(t-s)) \dd s
  \end{equation}
  and for $l \ge 2$
  \begin{equation*}
    (e^{tL} u_{in})(l,\xi) = u_{in}(l,\xi+lt).
  \end{equation*}
  Taking $\xi = 0$ in \Cref{eq:proof-linear-evolution-duhamel} shows
  that $\nu(t) = (e^{tL} u_{in})(1,0)$ must satisfy
  \Cref{eq:linear-order-volterra}. By the general theory of Volterra
  equation \cite[Chapter 2]{gripenberg1990volterra}
  \Cref{eq:linear-order-volterra} has a unique solution and the
  solution is continuous. Hence the evolution operator $e^{tL}$ is
  well-defined and given by
  \Cref{eq:linear-order-volterra,eq:linear-order-first-mode,eq:linear-order-higher-modes}.
\end{proof}
The solution of the Volterra equation can be characterised by the
following lemma.
\begin{lemma}
  For a convolution kernel $k \in L^1_{loc}(\R^+)$ there exists a
  unique resolvent kernel $r \in L^1_{loc}(\R^+)$ satisfying $r + r
  \conv k = k$. For $f \in L^1_{loc}(\R^+)$ the unique solution $x \in
  L^1_{loc}(\R^+)$ to the Volterra equation $x + k \conv x = f$ is
  given by $x = f - r \conv f$.
\end{lemma}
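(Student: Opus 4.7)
The plan is to construct the resolvent kernel locally by a contraction-mapping argument in a weighted $L^1$ norm, then verify the solution formula algebraically via the commutativity of convolution.

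\textbf{Construction of $r$.} Fix $T>0$ and, for $\omega\ge 0$, introduce on $L^1([0,T])$ the weighted norm
\begin{equation*}
  \|x\|_{(\omega)} = \int_0^T e^{-\omega t} |x(t)| \dd t.
\end{equation*}
A direct computation using Fubini and the substitution $u=t-s$ gives, for $x\in L^1([0,T])$,
\begin{equation*}
  \|x \conv k\|_{(\omega)} \le \left(\int_0^T e^{-\omega s} |k(s)| \dd s\right)\|x\|_{(\omega)}.
\end{equation*}
Since $k\in L^1([0,T])$, dominated convergence yields $\int_0^T e^{-\omega s}|k(s)|\dd s \to 0$ as $\omega\to\infty$, so I can pick $\omega_0$ for which this factor is $\le 1/2$. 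Then the map $r\mapsto k - r\conv k$ is a $1/2$-contraction on $(L^1([0,T]),\|\cdot\|_{(\omega_0)})$ and Banach's fixed-point theorem produces a unique $r_T\in L^1([0,T])$ with $r_T + r_T \conv k = k$. Running this on an exhausting sequence $T_n\uparrow\infty$ and invoking uniqueness on each $[0,T_n]$, the $r_{T_n}$ are consistent and define a unique $r\in L^1_{loc}(\R^+)$ with $r+r\conv k=k$.

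\textbf{Verification of the solution formula.} With $x:=f - r\conv f$, compute
\begin{equation*}
  x + k \conv x
  = f - r\conv f + k\conv f - k\conv r\conv f
  = f + (k - r - k\conv r)\conv f.
\end{equation*}
Since convolution on $L^1_{loc}(\R^+)$ is commutative, $k\conv r = r\conv k = k - r$ (from the resolvent equation), so the bracket vanishes and $x+k\conv x = f$. The associativity and commutativity used here are standard consequences of Fubini for $L^1_{loc}(\R^+)$ functions.

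\textbf{Uniqueness.} If $x_1,x_2\in L^1_{loc}(\R^+)$ both satisfy the Volterra equation, then $y:=x_1-x_2$ solves $y+k\conv y=0$. Convolving with $r$ and using $r\conv k = k-r$ yields
\begin{equation*}
  0 = r\conv y + r\conv k\conv y = r\conv y + (k-r)\conv y = k\conv y,
\end{equation*}
and plugging back into $y = -k\conv y$ gives $y\equiv 0$. The only delicate step is the initial construction of $r$, since for a general locally integrable kernel the naive Neumann series $\sum(-1)^{n+1}k^{\conv n}$ need not converge in $L^1_{loc}$; the weighted-norm contraction bypasses this and is the main point of the argument.
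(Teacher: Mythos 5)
Your proof is correct, but it takes a different route from the paper in the most literal sense: the paper does not prove this lemma at all, it simply cites Theorems 3.1 and 3.5 of Chapter 2 of Gripenberg--Londen--Staffans, \emph{Volterra Integral and Functional Equations}. You instead give a self-contained argument, and it holds up. The weighted (Bielecki-type) norm $\|x\|_{(\omega)}=\int_0^T e^{-\omega t}|x(t)|\,\dd t$ is equivalent to the usual $L^1([0,T])$ norm, your Fubini estimate $\|x\conv k\|_{(\omega)}\le\bigl(\int_0^T e^{-\omega s}|k(s)|\,\dd s\bigr)\|x\|_{(\omega)}$ is right, and dominated convergence makes the contraction constant small, so Banach's fixed point theorem gives $r_T$ on each $[0,T]$; the gluing via uniqueness on nested intervals is the standard way to pass to $L^1_{loc}(\R^+)$. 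The algebraic verification of $x=f-r\conv f$ and the uniqueness argument (convolving $y+k\conv y=0$ with $r$ and using $r\conv k=k-r$) rely only on commutativity and associativity of convolution, which Fubini justifies on each finite interval. What your approach buys is transparency and self-containedness at the cost of a page of analysis; what the paper's citation buys is brevity and access to the surrounding machinery of the reference (the Paley--Wiener and Gel'fand theorems used later in \Cref{thm:order-parameter-stability} come from the same source, so citing it uniformly is natural). The only stylistic remark: the reference's own proof proceeds by decomposing the kernel or working on intervals where $\|k\|_{L^1}$ is small and then patching, so your exponential-weight trick is a genuinely cleaner single-step contraction, and your closing observation that the naive Neumann series need not converge in $L^1_{loc}$ correctly identifies why some such device is needed.
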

\begin{proof}
  This is Theorem 3.1 and 3.5 of Chapter 2 of
  \cite{gripenberg1990volterra}.
\end{proof}
We can control the convolution by using the (sub)multiplicative
structure of the weights.
\begin{lemma}
  Let $r,f \in L^1_{loc}(\R^+)$. For $a \in \R$
  \begin{equation*}
    \| r \conv f \|_{L^\infty(\R^+,\exp_a)}
    \le \| r \|_{L^1(\R^+,\exp_a)}
    \| f \|_{L^\infty(\R^+,\exp_a)}
  \end{equation*}
  and for $A \ge 1$ and $b \ge 0$
  \begin{equation*}
    \| r \conv f \|_{L^\infty(\R^+,p_{A,b})}
    \le \| r \|_{L^1(\R^+,p_{b})}
    \| f \|_{L^\infty(\R^+,p_{A,b})}.
  \end{equation*}
\end{lemma}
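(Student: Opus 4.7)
The plan is to prove both estimates by bounding $(r \conv f)(t)$ pointwise using a submultiplicative property of the relevant weights. Recall that for $t \in \R^+$,
\[
|(r \conv f)(t)| \le \int_0^t |r(t-s)|\,|f(s)|\,\dd s.
\]
Multiplying by the weight and factoring inside the integral, I want in each case an inequality of the form $w(t) \le w_r(t-s)\, w_f(s)$ for $0 \le s \le t$, where $w = w_r$ for the exponential case and $w_f$ matches the $f$-norm.

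For the exponential weight, this is immediate: $\exp_a(t) = \exp_a(t-s)\exp_a(s)$. Hence
\[
|(r\conv f)(t)|\exp_a(t) \le \int_0^t |r(t-s)|\exp_a(t-s)\,|f(s)|\exp_a(s)\,\dd s
\le \|r\|_{L^1(\R^+,\exp_a)} \|f\|_{L^\infty(\R^+,\exp_a)},
\]
by pulling the $L^\infty$ norm of $f\exp_a$ out of the integral. Taking the supremum over $t \in \R^+$ yields the first bound.

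For the polynomial weight the only nontrivial step is the submultiplicativity inequality
\[
(A+t)^b \le (1+(t-s))^b\,(A+s)^b \qquad \text{for } 0 \le s \le t,\; A \ge 1,\; b \ge 0.
\]
Since $b \ge 0$, it suffices to show $A+t \le (1+(t-s))(A+s)$. Expanding the right-hand side and subtracting the left gives $(t-s)(A-1+s)$, which is nonnegative because $t \ge s$, $A \ge 1$, and $s \ge 0$. With this inequality in hand,
\[
|(r\conv f)(t)|\,p_{A,b}(t) \le \int_0^t |r(t-s)|\,p_b(t-s)\,|f(s)|\,p_{A,b}(s)\,\dd s
\le \|r\|_{L^1(\R^+,p_b)} \|f\|_{L^\infty(\R^+,p_{A,b})},
\]
and taking the supremum over $t$ finishes the second bound.

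There is no real obstacle; the only thing to verify carefully is the submultiplicativity inequality for the polynomial weight, and in particular that the loss in the $r$-factor is only $p_b = p_{1,b}$ rather than $p_{A,b}$ (which is what allows the hypothesis on $r$ to be the $A=1$ version). The requirement $A \ge 1$ is used precisely at this point.
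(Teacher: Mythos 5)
Your proof is correct and follows the same route as the paper: pointwise bounding of the convolution, exact multiplicativity of $\exp_a$, and the submultiplicativity estimate $A+t \le (1+t-s)(A+s)$ for $A \ge 1$ in the polynomial case. Your explicit verification that $(1+t-s)(A+s)-(A+t) = (t-s)(A-1+s) \ge 0$ is a nice touch the paper leaves implicit, but the argument is otherwise identical.
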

\begin{proof}
  For $a \in \R$ and $t \in \R^+$
  \begin{align*}
    |(r \conv f)(t)| e^{at}
    &\le \int_0^t |r(t-s)| e^{a(t-s)} |f(s)| e^{as} \dd s \\
    &\le \| f \|_{L^\infty(\R^+,\exp_a)}
      \int_0^t |r(t-s)| e^{a(t-s)} \dd s
  \end{align*}
  showing the first bound.

  For $A \ge 1$ and $b \ge 0$ and $t \in \R^+$
  \begin{align*}
    |(r \conv f)(t)| (A+t)^b
    &\le \int_0^t |r(t-s)| (1+t-s)^b |f(s)| (A+s)^b \dd s \\
    &\le \| f \|_{L^\infty(\R^+,p_{A,b})}
      \int_0^t |r(t-s)| (1+t-s)^b \dd s
  \end{align*}
  where we used that $A + t \le (1+t-s)(A+s)$ holds for $s,t \in
  \R^+$, because $A \ge 1$. Taking the supremum shows the second
  claim.
\end{proof}

These norms propagate in the linear evolution.
\begin{lemma}
  \label{thm:linear-stability-first-mode}
  Let $u_{in} \in C(\N \times \R)$. Then for $a \in \R$
  \begin{equation*}
    \sup_{t \ge 0} e^{at}
    \| (e^{tL} u_{in})(1,\cdot) \|_{L^\infty(\R^+, \exp_a)}
    \le c_1 \| u_{in}(1,\cdot) \|_{L^\infty(\R^+, \exp_a)}
  \end{equation*}
  with
  $c_1 = 1 + (1 + \| r \|_{L^1(\R^+,\exp_a)}) (K/2) \| g
  \|_{L^1(\R^+,\exp_a)}$ and for $A \ge 1$ and $b \ge 0$
  \begin{equation*}
    \sup_{t \ge 0} \sup_{\xi \ge 0} |(e^{tL}u_{in})(1,\xi)|
    (A+\xi+t)^b
    \le \gamma_1 \| u_{in}(1,\cdot) \|_{L^\infty(\R^+,p_{A,b})}
  \end{equation*}
  with
  $\gamma_1 = 1 + (1 + \| r \|_{L^1(\R^+,p_b)}) (K/2) \| g
  \|_{L^1(\R^+,p_b)}$.
\end{lemma}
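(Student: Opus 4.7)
The plan is to exploit the explicit Duhamel representation of the linear evolution from \Cref{thm:linear-evolution} and combine it with the Volterra resolvent bound already displayed in the excerpt. Writing $\nu(t) = (e^{tL}u_{in})(1,0)$, \Cref{eq:linear-order-first-mode} gives
\begin{equation*}
  (e^{tL}u_{in})(1,\xi)
  = u_{in}(1,\xi+t)
  + \frac{K}{2}\int_0^t \nu(s)\,\hat{g}(\xi+t-s)\,\dd s,
\end{equation*}
so the estimate reduces to controlling these two terms in the weighted norms, using the bounds on $\|\nu\|_{L^\infty(\R^+,\exp_a)}$ and $\|\nu\|_{L^\infty(\R^+,p_{A,b})}$ already derived from $\nu = u_{in}(1,\cdot) - r\conv u_{in}(1,\cdot)$ and the convolution lemma immediately preceding.

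For the exponential weight, I would multiply both sides by $e^{a(\xi+t)}$. Since $\xi+t \ge \xi \ge 0$, the free transport piece is immediately bounded by $\|u_{in}(1,\cdot)\|_{L^\infty(\R^+,\exp_a)}$. For the convolution piece, I split $e^{a(\xi+t)} = e^{as}\,e^{a(\xi+t-s)}$ and note that for $0 \le s \le t$ and $\xi\ge 0$ one has $\xi+t-s\ge 0$, so
\begin{equation*}
  \int_0^t e^{as}|\nu(s)|\,e^{a(\xi+t-s)}|\hat{g}(\xi+t-s)|\,\dd s
  \le \|\nu\|_{L^\infty(\R^+,\exp_a)}\,\|\hat{g}\|_{L^1(\R^+,\exp_a)}.
\end{equation*}
Inserting the resolvent bound on $\nu$ and taking the supremum over $\xi\ge0$ gives the asserted inequality with the stated constant $c_1$.

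For the algebraic weight the same template works once I record the submultiplicativity inequality
\begin{equation*}
  A+\xi+t \le (1+\xi+t-s)(A+s), \qquad A\ge 1,\ 0\le s\le t,\ \xi\ge 0,
\end{equation*}
which follows from $A+s\ge 1$ (the same inequality used in the preceding convolution lemma). Raising to the $b$-th power and applying it inside the Duhamel integral splits the weight $(A+\xi+t)^b$ between the $\hat{g}$-factor (picking up $(1+\xi+t-s)^b$, which is absorbed into $\|\hat{g}\|_{L^1(\R^+,p_b)}$) and the $\nu$-factor (picking up $(A+s)^b$, absorbed into $\|\nu\|_{L^\infty(\R^+,p_{A,b})}$). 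Combined with the bound $\|\nu\|_{L^\infty(\R^+,p_{A,b})} \le (1+\|r\|_{L^1(\R^+,p_b)})\|u_{in}(1,\cdot)\|_{L^\infty(\R^+,p_{A,b})}$ stated earlier, this yields the constant $\gamma_1$.

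There is no real obstacle here: the proof is essentially a bookkeeping exercise once the Duhamel formula and the Volterra resolvent estimate are in place. The only point requiring minor care is the sign condition $\xi+t-s\ge 0$ (so that $\hat{g}$ is only evaluated on $\R^+$, matching the domain of the weighted $L^1$-norms), which is automatic since the $\sup$ is taken over $\xi\ge 0$; this is also the structural reason why the one-sided weights $\exp_a$ and $p_{A,b}$ on $\R^+$ are natural for capturing the decay.
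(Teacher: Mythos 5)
Your proposal is correct and follows essentially the same route as the paper: both start from the Duhamel formula \eqref{eq:linear-order-first-mode}, bound $\nu$ via the resolvent representation $\nu = u_{in}(1,\cdot) - r\conv u_{in}(1,\cdot)$ and the preceding convolution lemma, and split the weights using $e^{a(\xi+t)} = e^{as}e^{a(\xi+t-s)}$ in the exponential case and $(A+\xi+t) \le (1+\xi+t-s)(A+s)$ in the algebraic case. No gaps.
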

\begin{proof}
  By the previous lemma and the explicit formula for $\nu$,
  \Cref{eq:linear-order-first-mode} shows for $\xi \in \R$
  \begin{align*}
    | &(e^{tL} u_{in})(1,\xi) | e^{a(t+\xi)}
    \le \| u_{in} \|_{L^{\infty}(\R^+,\exp_a)} \\
    &+ \int_0^t \frac{K}{2} (1 + \| r \|_{L^1(\R^+,\exp_a)})
    \| u_{in} \|_{L^{\infty}(\R^+,\exp_a)} e^{a(\xi+t-s)}
    |\hat{g}(\xi+t-s)| \dd s
  \end{align*}
  which proves the claim.

  In the algebraic case we find for $\xi \ge 0$
  \begin{align*}
    | &(e^{tL} u_{in})(1,\xi) | (A+\xi+t)^{b}
    \le \| u_{in} \|_{L^{\infty}(\R^+,p_{A,b})} \\
    &+ \int_0^t \frac{K}{2} (1 + \| r \|_{L^1(\R^+,p_b)})
    \| u_{in} \|_{L^{\infty}(\R^+,p_{A,b})} \frac{(A+\xi+t)^b}{(A+s)^b}
    |\hat{g}(\xi+t-s)| \dd s
  \end{align*}
  which implies the result as $(A+\xi+t)/(A+s) \le (1+\xi+t-s)$.
\end{proof}

The stability condition on the kernel comes from the literature.
\begin{proof}[Proof of \Cref{thm:order-parameter-stability}]
  The exponential condition follows from Lemma 3.4 and Theorem 4.1 of
  Section 2 of \cite{gripenberg1990volterra} and is called the
  Paley-Wiener theorem. The algebraic condition comes from Gel'fand's
  theorem as Theorem 4.3 in Section 4 in
  \cite{gripenberg1990volterra}, because the weight $p_b$ is
  submultiplicative.
\end{proof}

The stability criterion $1+\lap k(z) \not = 0$ for $\Re z \ge 0$ can
be understood as imposing that $\lap \hat{g}(\{z : \Re z \ge 0\})$
does not contain $2/K$. Since $\lap \hat{g}$ is analytic, the argument
principle shows that this is equivalent to imposing that $\lap
\hat{g}(i\R)$ does not encircle $2/K$.

For $\Re z > 0$ we find by Fubini
\begin{equation*}
  \lap \hat{g}(z)
  = \int_0^\infty e^{-z\xi} \hat{g}(\xi) \dd \xi
  = \int_{-\infty}^{\infty} g(\omega) \frac{1}{z-i\omega} \dd \omega.
\end{equation*}

By continuity of $\lap \hat{g}$ we thus find
\begin{equation*}
  \lap \hat{g}(ix)
  = \lim_{\lambda \to 0+0} \int_{-\infty}^{\infty}
  \frac{g(\omega)}{i(x-\omega)+\lambda} \dd \omega
  = i \principalV \int_{-\infty}^{\infty}
  \frac{g(x+\omega)}{\omega} \dd \omega
  + \pi g(x),
\end{equation*}
where the last equality is Plemelj formula for continuous $g$ and
$\principalV$ denotes the principal value integral.

The curve $\lap \hat{g}(i\R)$ starts and ends at $0$, is bounded, and
goes through the right half plane where it crosses the real axis by
continuity of the principal value integral. If $g > 0$, this shows
that it crosses the real axis at a positive value. Thus there exists a
critical value $K_c$ such that for $0\le K < K_c$ the system in
linearly stable and for $K_c + \delta > K > K_c$ with some
$\delta > 0$ the system is unstable.

For the Gaussian distribution, we can plot the boundary explicitly
(cf. \Cref{fig:gaussianStability}) and note that for $K < 2/(\pi
g(0))$ the solution is stable while for $K > 2/(\pi g(0))$ there is
exactly one root of $(1+\lap k)(z)$ for $\Re z \ge 0$.

\begin{figure}[htb]
  \setlength\fwidth{\textwidth}
  \setlength\fheight{6cm}
  \input{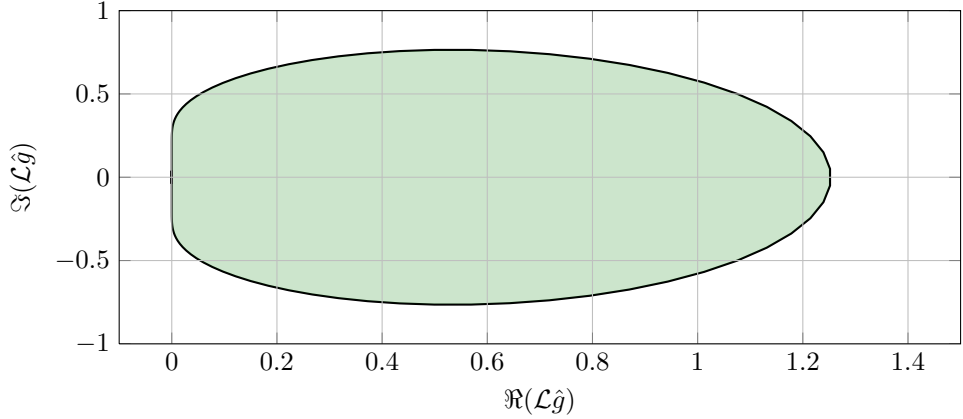}
  \caption{The black line shows $\lap \hat{g}(i \R)$ with the enclosed
  green area $\lap \hat{g}(\{z : \Re z > 0\})$ for the Gaussian
  distribution $g(\omega) = (2\pi)^{-1/2} e^{-\omega^2/2}$.}
  \label{fig:gaussianStability}
\end{figure}

Already for two Gaussians with variance $1$ centred at
$\omega = \pm 1.5$, we see a more interesting behaviour
(cf. \Cref{fig:gaussianTwoStability}). Here we see that at the
critical coupling two roots of $(1+\lap k)(z)$ appear and that for
sufficiently large $K$ this reduces to one root.

\begin{figure}[htb]
  \setlength\fwidth{\textwidth}
  \setlength\fheight{6cm}
  \input{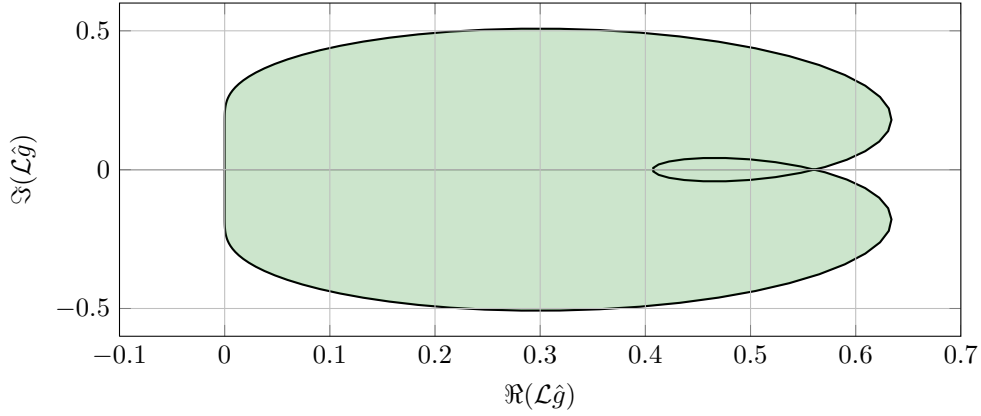}
  \caption{The black line shows $\lap \hat{g}(i \R)$ with the enclosed
  green area $\lap \hat{g}(\{z : \Re z > 0\})$ for two Gaussian
  distributions with variance $1$ and centred at $\omega = \pm 1.5$.}
  \label{fig:gaussianTwoStability}
\end{figure}

A sufficient condition for stability is
\begin{equation*}
  K < \frac{2}{\pi \|g\|_{\infty}}.
\end{equation*}
In this case $\Re(\lap \hat{g}(i x)) < 2/K$ so that $\lap \hat{g}(i
\R)$ cannot encircle $2/K$ and the solution is stable. In the case of
a symmetric distribution with a maximum at $0$, this condition is
sharp for the first instability because then $(\lap \hat{g})(0) =
\pi g(0)$.

Comparing the linear stability condition to possible stationary
solutions, we can understand the stability condition on the real part
$\pi g(x)$ as critical mass density to form an instability while the
imaginary part ensures that it is not drifted away.

\section{Nonlinear stability}
\label{sec:nonlinear-stability}

In Fourier variables the nonlinear interaction is given by
$R:C(\N \times \R) \mapsto C(\N \times \R)$ defined as
\begin{equation*}
  \begin{cases}
    R(v)(1,\xi) = - \frac{K}{2} \conj{v(1,0)} v(2,\xi), \\
    R(v)(l,\xi) = \frac{Kl}{2} \left[ v(1,0) v(l-1,\xi) - \conj{v(1,0)}
      v(l+1,\xi) \right] \text{ for $l \ge 2$}
  \end{cases}
\end{equation*}
for $ v \in C(\N \times \R)$. Here we see that the nonlinearity is not
bounded as it increases with the spatial mode $l$. In the exponential
convergence we can match this with the faster decay of the linear
operator in higher modes. In the algebraic case we compensate for this
in the norm by the factor $(1+t)^{-\alpha(l-1)}$.

We characterise the solution by Duhamel's principle on the linear
evolution.
\begin{lemma}
  \label{thm:duhamel-linear}
  Let $\hat{g} \in C(\R)$ and $e^{tL}$ the linear evolution from
  \Cref{thm:linear-evolution}. For $u_{in} \in C(\N \times \R)$ define
  the map $T : C(\R^+ \times \N \times \R) \mapsto C(\R^+ \times \N
  \times \R)$ by
  \begin{equation*}
    (Tu)(t,l,\xi) = (e^{tL}u_{in})(l,\xi)
    + \int_0^t (e^{(t-s)L} R(u(s,\cdot,\cdot)))(l,\xi) \dd s.
  \end{equation*}
  Then $u \in C(\R^+ \times \N \times \R)$ satisfies
  \Cref{eq:kuramoto-fourier-weak} for all $h \in C^1_0(\R^+ \times \N
  \times \R)$ if and only if $T u = u$.
\end{lemma}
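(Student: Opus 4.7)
The plan is to interpret $T$ as Duhamel's principle applied to the linear semigroup $e^{tL}$ with nonlinear forcing $R(u)$. The equivalence ``$u$ satisfies the weak PDE'' $\Leftrightarrow$ ``$u = Tu$'' is then a standard well-posedness statement for the forced linear equation, and I would establish it by comparing both characterisations to the free-transport Duhamel identity of \Cref{thm:kuramoto-fourier-duhamel-free}.

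For $l \ge 2$ the explicit formula $(e^{\tau L}v)(l,\xi) = v(l,\xi+l\tau)$ from \Cref{eq:linear-order-higher-modes} reduces $(Tu)(t,l,\xi)$ to
\begin{equation*}
u_{in}(l,\xi+lt) + \int_0^t R(u(s))(l,\xi+l(t-s))\, \dd s,
\end{equation*}
which is exactly the $l \ge 2$ identity that \Cref{thm:kuramoto-fourier-duhamel-free} extracts from the weak PDE, since the bracketed nonlinear interaction on mode $l$ equals $R(u(s))(l,\cdot)$ by definition. So the equivalence is immediate on all higher modes.

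For $l = 1$ the self-interaction of $e^{tL}$ with $\hat g$ (captured by \Cref{eq:linear-order-first-mode} together with the associated Volterra equation) must be unpacked. I would substitute \Cref{eq:linear-order-first-mode} into both terms defining $(Tu)(t,1,\xi)$, apply Fubini to exchange the outer $s$-integral with the inner $\sigma$-integral, and use the Volterra identity $\nu + k \conv \nu = u_{in}(1,\cdot)$ together with $\eta(\sigma) = u(\sigma,1,0) = (Tu)(\sigma,1,0)$ (read at $\xi = 0$ under the fixed-point assumption) to show that the accumulated $\hat g$-contributions collapse to $\int_0^t \tfrac{K}{2}\eta(\sigma)\hat g(\xi+t-\sigma)\,\dd\sigma$, while the remaining pure-transport pieces collapse to $-\int_0^t \tfrac{K}{2}\conj{\eta(s)} u(s,2,\xi+t-s)\,\dd s$. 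This reproduces the $l=1$ identity of \Cref{thm:kuramoto-fourier-duhamel-free}. The converse direction follows either by reading the same computation backwards, or more cleanly by noting that $Tu$ weakly solves \Cref{eq:fourier-linear} with forcing $R(u)$ and invoking a uniqueness argument analogous to \Cref{thm:fourier-uniqueness}.

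The main obstacle is this first-mode Fubini/Volterra bookkeeping: ensuring that the self-consistency encoded in the Volterra equation for $e^{tL}$ is preserved under the nonlinear forcing. Once it is laid out, the proof is essentially a rearrangement, and no new estimates are needed beyond those already used for \Cref{thm:linear-evolution,thm:kuramoto-fourier-duhamel-free}.
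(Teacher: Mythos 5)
Your proposal is correct in substance but takes a more computational route than the paper. The paper's proof is a two-line abstract argument: regard the genuinely nonlinear terms $R(u)$ as a given forcing in \Cref{eq:kuramoto-fourier-weak}; the remaining problem is exactly the linear equation \Cref{eq:fourier-linear} with forcing, whose unique continuous weak solution is given by the Duhamel formula $Tu$ (this is what \Cref{thm:linear-evolution} provides); hence $u$ solves the weak PDE iff $u=Tu$. You instead split off only the \emph{free transport} and compare $Tu$ term by term with the formula of \Cref{thm:kuramoto-fourier-duhamel-free}, unpacking the Volterra structure of $e^{tL}$ on the first mode. That works: on modes $l\ge2$ the two formulas coincide identically, and on the first mode the Fubini exchange turns the accumulated $\hat g$-contributions into $\tfrac{K}{2}\int_0^t\mu(s)\hat g(\xi+t-s)\,\dd s$ with $\mu(t)=(Tu)(t,1,0)$, after which one checks that $\mu$ and $\eta(t)=u(t,1,0)$ satisfy the \emph{same} Volterra equation with right-hand side $u_{in}(1,\cdot)+\int_0^{\cdot}R(u(s))(1,\cdot-s)\,\dd s$ and invokes uniqueness of the Volterra solution. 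One small caution: in the forward direction you may not \emph{assume} $(Tu)(\sigma,1,0)=u(\sigma,1,0)$; the identification $\mu=\eta$ must come out of the Volterra uniqueness as just described, not be fed in. What your route buys is an explicit reconciliation of the two Duhamel representations (free transport versus $e^{tL}$), which is instructive but strictly more work; what the paper's route buys is that all of this bookkeeping is absorbed into the already-established well-posedness of the forced linear problem, which is also the cleanest way to get your converse direction.
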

\begin{proof}
  For $u$ consider the nonlinear terms as forcing in
  \Cref{eq:kuramoto-fourier-weak}. The linear problem has a unique
  solution given by $Tu$. Hence $u$ is a solution if and only if
  $Tu = u$.
\end{proof}

For the exponential and algebraic stability we want to propagate the
norms given in the introduction using
\Cref{thm:duhamel-linear}. However, we do not know a priori if these
norms stay finite. Therefore, we construct a solution using Banach
fixed point theorem and by the uniqueness
(\Cref{thm:fourier-uniqueness}) this must be the mean-field solution.
\begin{proof}[Proof of \Cref{thm:exponential-stability}]
  Define on $C(\R^+ \times \N \times \R)$ the norm $\| \cdot \|_{e}$ by
  \begin{equation*}
    \| u \|_{e} =
    \sup_{t \in \R^+} \sup_{l \ge 1} \sup_{\xi \in \R} |u(t,l,\xi)| e^{a(\xi+tl/2)}
  \end{equation*}
  for $u \in C(\R^+ \times \N \times \R)$.

  By the assumption \Cref{thm:linear-stability-first-mode} applies
  with a finite constant $c_1$. This shows for
  $u \in C(\R^+ \times \N \times \R)$ and $t \in \R^+, \xi \in \R$
  \begin{align*}
    e^{a(\xi+t/2)} |Tu(t,1,\xi)|
    &\le c_1 e^{-at/2} M_{in}
    + \frac{K c_1 \| u \|_{e}^2}{2} \int_0^t e^{-a(t-s)/2} \dd s \\
    &\le c_1 M_{in} + \frac{Kc_1}{a} \| u \|_{e}^2
  \end{align*}
  and by the explicit form (\Cref{thm:linear-evolution}) for $l \ge 2$
  \begin{align*}
    e^{a(\xi+tl/2)} |Tu(t,l,\xi)|
    &\le e^{-atl/2} M_{in}
    + K l \| u \|_{e}^2 \int_0^t e^{-al(t-s)/2} \dd s \\
    &\le M_{in} + \frac{2K}{a} \| u \|_{e}^2.
  \end{align*}
  Hence
  \begin{equation*}
    \| Tu \|_{e} \le c_1 M_{in} + c_2 \| u \|_{e}^2
  \end{equation*}
  where $c_2 = \max(c_1,2) K / a$. Therefore, there exists $\delta >
  0$ such that if $M_{in} \le \delta$ and $\| u \|_{e} \le (1+c_1)
  M_{in}$, then $\|Tu\|_e \le (1+c_1) M_{in}$.

  If $u,\tilde{u} \in C(\R^+ \times \N \times \R)$ satisfy
  $\|u\|_{e}, \|\tilde{u}\|_{e} \le (1+c_1)\delta$, then as before we
  find
  \begin{equation*}
    \| Tu - T\tilde{u} \|_{e}
    \le \frac{2K(1+c_1)\delta}{a} \max(c_1,2) \|u - \tilde{u}\|_{e}.
  \end{equation*}
  Hence we can choose $\delta$ small enough such that $T$ is a
  contraction on $\{u \in C(\R^+ \times \N \times \R) : \| u \|_{e}
  \le (1+c_1) M_{in} \}$ for $M_{in} \le \delta$.

  Thus for $M_{in} \le \delta$, there exists a unique fixed point $u$
  with $\| u \|_{e} \le (1+c_1) M_{in}$. By
  \Cref{thm:fourier-uniqueness}, this $u$ must be equal to the Fourier
  transform of the solution to the Kuramoto equation.
\end{proof}

The structure for the proof of algebraic stability is very similar to
the previous proof of exponential estimate.
\begin{proof}[Proof of \Cref{thm:algebraic-stability}]
  Define on $C(\R^+ \times \N \times \R)$ the norm $\| \cdot \|_{a}$
  by
  \begin{equation*}
    \| u \|_{a} =
    \sup_{t\in\R^+} \sup_{l \ge 1} \sup_{\xi \in \R^+}
    |u(t,l,\xi)| \frac{(1+\xi+t)^b}{(1+t)^{\alpha(l-1)}}.
  \end{equation*}
  for $u \in C(\R^+ \times \N \times \R)$.

  By the assumption \Cref{thm:linear-stability-first-mode} applies
  with a finite constant $\gamma_1$. This shows for
  $u \in C(\R^+ \times \N \times \R)$ and $t\in\R^+, \xi \in \R$
  \begin{align*}
    |Tu(t,1,\xi)| (1+\xi+t)^b
    &\le \gamma_1 M_{in} + \frac{K\|u\|_a^2}{2} \int_0^t
    \gamma_1 (1+s)^{\alpha-b} \dd s \\
    &\le \gamma_1 M_{in} + \frac{\gamma_1 K}{2(b-1-\alpha)} \| u \|_{a}^2.
  \end{align*}
  For modes $l \ge 2$ the explicit formula \Cref{thm:linear-evolution}
  shows
  \begin{equation*}
    |Tu(t,l,\xi)| \frac{(1+\xi+t)^b}{(1+t)^{\alpha(l-1)}}
    \le M_{in} + K \|u\|^2_a l \int_0^t
    \frac{(1+s)^{\alpha l-b}}{(1+t)^{\alpha(l-1)}} \dd s.
  \end{equation*}
  For $l \in \N$ with $\alpha l > b - 1$ we can bound
  \begin{equation*}
    l \int_0^t \frac{(1+s)^{\alpha l-b}}{(1+t)^{\alpha(l-1)}} \dd s
    \le \frac{l}{\alpha l-b+1}
  \end{equation*}
  by a finite constant independent of $l$.

  For $l \in \N$ with $l \ge 2$ and $\alpha l - b \le 1$ the integral
  $\int_0^t (1+s)^{\alpha l-b} \dd s$ grows at most logarithmically,
  so that it can be bounded by $(1+t)^{\alpha(l-1)}$. Hence there
  exists a finite $\gamma_2$ satisfying
  \begin{equation*}
    \| Tu \|_{a} \le \gamma_1 M_{in} + \gamma_2 \| u \|_a^2.
  \end{equation*}

  Thus there exists $\delta > 0$ such that $M_{in} \le \delta$ implies
  that $\| T u \|_a \le (1+\gamma_1) M_{in}$ holds if $\| u \|_{a} \le
  (1+\gamma_1) M_{in}$.

  For two functions $u,\tilde{u} \in C(\R^+ \times \N \times \R)$ with
  $\| u \|_a, \| \tilde{u} \|_{a} \le (1+\gamma_1) \delta$, we find
  for $t\in\R^+$ and $\xi \in \R^+$ as before
  \begin{align*}
    |(Tu-T\tilde{u})(t,1,\xi)| (1+\xi+t)^b
    &\le K \gamma_1 (1+\gamma_1) \delta \|u - \tilde{u}\|_{a}
    \int_0^t (1+s)^{\alpha-b} \dd s \\
    &\le \frac{K \gamma_1 (1+\gamma_1) \delta}{b-1-\alpha} \|u -
    \tilde{u}\|_{a}
  \end{align*}
  and for $l \ge 2$
  \begin{equation*}
    |(Tu - T\tilde{u})(t,l,\xi)|
    \frac{(1+\xi+t)^b}{(1+t)^{\alpha(l-1)}}
    \le 2K (1+\gamma_1) \delta \| u - \tilde{u}\|_a
    l \int_0^t \frac{(1+s)^{\alpha l-b}}{(1+t)^{\alpha(l-1)}} \dd s.
  \end{equation*}
  By the same argument, the factors with $l$ are uniformly bounded
  over $l \ge 2$, so that for small enough $\delta > 0$ the map $T$ is
  a contraction on
  $\{u \in C(\R^+ \times \N \times \R) : \| u \|_{a} \le (1+\gamma_1)
  M_{in} \}$
  when $M_{in} \le \delta$. Hence there exists a unique fixed point
  $Tu = u$ with $\| u \|_{a} \le (1+\gamma_1) M_{in}$. For every
  $T > 0$, the restricted uniqueness (\Cref{thm:fourier-uniqueness}
  and \Cref{thm:fourier-restriction}) shows that this solution must be
  equal to the Fourier transform of the mean-field solution to the
  Kuramoto equation up to time $T$. Hence $u$ always equals the
  Fourier transform of the solution and the bound on $u$ proves the
  theorem.
\end{proof}

\section{Linear eigenmodes and spectral decomposition}
\label{sec:spectral-decomposition}

The solution of the Volterra equation with eigenmodes follows from the
literature.
\begin{proof}[Proof of \Cref{thm:linear-volterra-modes}]
  By the assumed decay of $\hat{g}$, the Laplace transform $\lap k$ is
  analytic in $\{z : \Re z > -a\}$ and continuous up to the boundary
  $\{z : \Re z = -a\}$. By the Riemann-Lebesgue lemma further $\lap
  k(z) \to 0$ as $|z| \to \infty$ for $\Re z > -a$. Hence $1 + \lap
  k(z)$ can only have finitely many poles with finite multiplicity.

  The decomposition now follows from Theorem 2.1 of Chapter 7 of
  \cite{gripenberg1990volterra}.
\end{proof}

For the characterisation of the stable subspace we use the functional
$\alpha$.
\begin{lemma}
  For $a \in \R$ and $\lambda \in \C$ and $j \in \N$ with $\Re \lambda
  > -a$ define the continuous functional $\alpha_{\lambda,j}$ on
  $\mathcal{Y}^a$ by
  \begin{equation*}
    \alpha_{\lambda,j}(u)
    = \left.\left(\frac{\dd}{\dd z}\right)^j
      [\lap u(1,\cdot)](z) \right|_{z=\lambda}
    = \int_0^\infty u(1,t) (-t)^j e^{-\lambda t} \dd t.
  \end{equation*}
\end{lemma}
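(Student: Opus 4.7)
The plan is to verify absolute convergence of the defining integral, use that to justify differentiation of the Laplace transform under the integral sign, and then read off the continuity bound from the same estimate.

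First, for $u \in \mathcal{Y}^a$ the definition of the norm gives
$|u(1,t)| \le \| u \|_{\mathcal{Y}^a}\, e^{-at}$
for every $t \in \R^+$. Fix $\lambda \in \C$ with $\Re \lambda > -a$ and set $\epsilon = \Re \lambda + a > 0$. Then
\begin{equation*}
  |u(1,t)\,(-t)^j e^{-\lambda t}|
  \le \| u \|_{\mathcal{Y}^a}\, t^j e^{-\epsilon t},
\end{equation*}
which is integrable on $\R^+$ with $\int_0^{\infty} t^j e^{-\epsilon t} \dd t = j!/\epsilon^{j+1}$. This proves that the integral in the statement converges absolutely and that
\begin{equation*}
  |\alpha_{\lambda,j}(u)| \le \frac{j!}{\epsilon^{j+1}}\, \| u \|_{\mathcal{Y}^a},
\end{equation*}
which gives the claimed continuity with respect to $\| \cdot \|_{\mathcal{Y}^a}$.

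Next I would identify the integral with the $j$-th derivative of the Laplace transform. The same domination $| u(1,t) t^j e^{-\lambda t}| \le \|u\|_{\mathcal{Y}^a}\, t^j e^{-\epsilon t}$, holding uniformly for $\lambda$ in any compact subset of $\{\Re z > -a\}$ (shrinking $\epsilon$ slightly), shows by the standard Morera/dominated-convergence argument that $\mathcal{L} u(1,\cdot)$ is holomorphic on $\{\Re z > -a\}$ and that its derivatives may be computed by differentiating under the integral sign. Iterating this $j$ times produces the factor $(-t)^j$ and yields the stated equality $(\dd/\dd z)^j[\mathcal{L} u(1,\cdot)](z)\big|_{z=\lambda} = \int_0^\infty u(1,t)(-t)^j e^{-\lambda t} \dd t$. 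No step looks delicate here; the only point to be careful about is that the decay hypothesis $\Re \lambda > -a$ is used precisely to gain the exponential factor $e^{-\epsilon t}$ that absorbs the polynomial growth $t^j$ coming from the repeated differentiation.
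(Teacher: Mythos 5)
Your proposal is correct and follows essentially the same route as the paper, which simply notes that $|u(1,\xi)| \le e^{-a\xi}\|u\|_{\mathcal{Y}^a}$ makes the functional well-defined and continuous and allows differentiation under the integral sign; you have merely written out the quantitative bound $j!/(\Re\lambda+a)^{j+1}$ and the domination argument explicitly.
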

\begin{proof}
  Since $|u(1,\xi)| \le e^{-a\xi} \| u \|_{\mathcal{Y}^a}$, the
  functional is well-defined, continuous and for $j \ge 1$ the
  derivative can be computed by differentiating under the integral
  sign.
\end{proof}

\begin{lemma}
  \label{thm:stable-subspace}
  Assume $a \in \R$ and $\hat{g} \in C(\R)$ as in
  \Cref{thm:linear-volterra-modes} and let $\lambda_1,\dots ,\lambda_n$ be the
  roots of multiplicity $p_1,\dots ,p_n$ as in
  \Cref{thm:linear-volterra-modes}. Define the closed subspace
  \begin{equation*}
    \mathcal{Y}^a_s = \{ u \in \mathcal{Y}^a
    : \alpha_{\lambda_i,j}(u) = 0 \text{ for $i=1,\dots,n$ and
      $j=0,\dots ,p_i-1$}\}.
  \end{equation*}
  Then $\mathcal{Y}^a_s$ is invariant under the linear evolution
  $e^{tL}$ and $u \in \mathcal{Y}^a$ is in $\mathcal{Y}^a_s$ if and
  only if for $\nu(t) = (e^{tL}u)(1,0)$ holds $\nu \in
  L^\infty(\R^+,\exp_a)$.
  In this case
  \begin{equation}
    \label{eq:linear-stable-part-control}
    \sup_{t \ge 0} e^{at}
    \| (e^{tL} u)(1,\cdot) \|_{L^\infty(\R^+, \exp_a)}
    \le c_1 \| u(1,\cdot) \|_{L^\infty(\R^+, \exp_a)}
  \end{equation}
  holds with
  $c_1 = 1 + (1 + \| r_s \|_{L^1(\R^+,\exp_a)}) (K/2) \| g
  \|_{L^1(\R^+,\exp_a)} < \infty$.
\end{lemma}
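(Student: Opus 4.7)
The plan is to leverage the Volterra representation from \Cref{thm:linear-volterra-modes}, which writes $\nu(t) = (e^{tL}u)(1,0) = u(1,t) - (q \conv u(1,\cdot))(t)$ with $q = r_s + q_u$ where $q_u(t) = \sum_{i,j} b_{i,j} t^j e^{\lambda_i t}$ collects the growing eigenmode contributions. The stable part $r_s \conv u(1,\cdot)$ is controlled in $L^\infty(\R^+,\exp_a)$ by the convolution lemma, so the core of the argument is to understand the eigenmode convolution $q_u \conv u(1,\cdot)$ and to tie its boundedness to the vanishing of the functionals $\alpha_{\lambda_i,k}(u)$.

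For each summand, I would expand $(t-s)^j = \sum_{k=0}^j \binom{j}{k} (-1)^k t^{j-k} s^k$ and split $\int_0^t = \int_0^\infty - \int_t^\infty$ to isolate the growing part:
\begin{equation*}
  \int_0^t (t-s)^j e^{\lambda_i(t-s)} u(1,s) \dd s
  = e^{\lambda_i t} \sum_{k=0}^{j} \binom{j}{k} t^{j-k} \alpha_{\lambda_i,k}(u)
  - \int_t^\infty (t-s)^j e^{\lambda_i(t-s)} u(1,s) \dd s.
\end{equation*}
After the substitution $v = s - t$, the tail integral is bounded by $\|u(1,\cdot)\|_{L^\infty(\R^+,\exp_a)} e^{-at} \int_0^\infty v^j e^{-(\Re\lambda_i + a) v} \dd v$, and this last integral is finite precisely because $\Re \lambda_i > -a$. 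When $u \in \mathcal{Y}^a_s$ every functional $\alpha_{\lambda_i,k}(u)$ vanishes, the exponentially growing main term drops out, and $q_u \conv u(1,\cdot) \in L^\infty(\R^+,\exp_a)$. Combined with the bound on $r_s \conv u(1,\cdot)$ this gives $\nu \in L^\infty(\R^+,\exp_a)$ with norm controlled linearly by $\|u(1,\cdot)\|_{L^\infty(\R^+,\exp_a)}$; plugging this into the Duhamel formula \eqref{eq:linear-order-first-mode} and estimating as in the proof of \Cref{thm:linear-stability-first-mode} then yields \eqref{eq:linear-stable-part-control} with the stated constant $c_1$.

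For the converse characterisation, I would start from $\nu \in L^\infty(\R^+,\exp_a)$, cancel the already-controlled $u(1,\cdot)$, $r_s \conv u(1,\cdot)$ and the bounded tail integrals in the identity above, and thereby reduce to showing that $\sum_i e^{\lambda_i t} P_i(t) \in L^\infty(\R^+,\exp_a)$ forces each polynomial $P_i$ of degree $\le p_i - 1$ (whose coefficients depend linearly on the $\alpha_{\lambda_i,k}(u)$) to vanish. Since the $\lambda_i$ are distinct and $\Re \lambda_i > -a$, a standard argument (peeling off the dominant exponential rate and iterating) shows that each $P_i$ must indeed be identically zero. Reading off the coefficient of $t^m$ in descending order $m = p_i-1, p_i-2, \dots, 0$ yields a triangular linear system in the $\alpha_{\lambda_i,k}(u)$ whose diagonal entries are nonzero scalar multiples of $b_{i,p_i-1}$; because $\lambda_i$ is a pole of $[1+\lap k]^{-1}$ of multiplicity \emph{exactly} $p_i$, we have $b_{i,p_i-1} \neq 0$, and the system forces $\alpha_{\lambda_i,k}(u) = 0$ for all $k$. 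Invariance of $\mathcal{Y}^a_s$ under $e^{tL}$ then follows at once from this characterisation together with the semigroup property: if $u \in \mathcal{Y}^a_s$ and $s \ge 0$, the order parameter generated by $e^{sL}u$ is the translate $t \mapsto \nu(t+s)$, which remains in $L^\infty(\R^+,\exp_a)$.

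The main technical obstacle is the algebraic bookkeeping in the converse direction, specifically justifying the linear independence modulo $L^\infty(\R^+,\exp_a)$ of the family $\{t^k e^{\lambda_i t}\}_{i,k}$ with distinct $\lambda_i$ satisfying $\Re \lambda_i > -a$, and verifying that $b_{i,p_i-1} \neq 0$ from the exact multiplicity of $\lambda_i$ as a pole of $[1+\lap k]^{-1}$.
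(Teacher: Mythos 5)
Your proposal is correct, and the forward direction and the invariance argument are essentially the paper's. The converse is where you diverge: the paper simply takes Laplace transforms of the Volterra equation \eqref{eq:linear-order-volterra}, notes that $\nu \in L^\infty(\R^+,\exp_a)$ makes $\lap \nu$ analytic on $\{\Re z > -a\}$, and reads off from $(1+\lap k)(z)\,(\lap\nu)(z) = [\lap u(1,\cdot)](z)$ that $\lap u(1,\cdot)$ must inherit a zero of order $p_i$ at each $\lambda_i$ --- which is exactly $\alpha_{\lambda_i,j}(u)=0$, since $\alpha_{\lambda,j}$ is the $j$-th derivative of $\lap u(1,\cdot)$ at $\lambda$. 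This one-line frequency-domain argument sidesteps both of the technical obstacles you flag: the linear independence of $\{t^k e^{\lambda_i t}\}$ modulo $L^\infty(\R^+,\exp_a)$ and the non-vanishing of $b_{i,p_i-1}$ (the latter does hold, since $\lambda_i$ is a zero of $1+\lap k$ of exact order $p_i$ and hence a pole of $[1+\lap k]^{-1}$ of exact order $p_i$, so your triangular system is genuinely solvable --- but you would have to prove both facts, whereas the paper needs neither). One small correction to your forward direction: the representation in \Cref{thm:linear-volterra-modes} is $\nu(t) = u(1,t) - \int_0^\infty q(t-s)\,u(1,s)\,\dd s$, not the truncated convolution $\int_0^t$; with the correct $\int_0^\infty$ form the eigenmode contribution cancels \emph{identically} on $\mathcal{Y}^a_s$ (no tail integral survives), which is what gives the constant $c_1$ exactly as stated. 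Your $\int_0^t$-plus-tail bookkeeping still proves boundedness but would inflate $c_1$ by the contribution of the tail integrals.
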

\begin{proof}
  If $u \in \mathcal{Y}^{a}_s$, the solution of the Volterra equation
  for $\nu$ in \Cref{thm:linear-volterra-modes} simplifies to
  \begin{equation*}
    \nu(t) = u(1,t) - (r_s \conv u(1,\cdot))(t).
  \end{equation*}
  Since $r_s \in L^1(\R, \exp_a)$, this shows
  $\nu \in L^\infty(\R^+,\exp_a)$ and, as in
  \Cref{thm:linear-stability-first-mode},
  \Cref{eq:linear-stable-part-control} follows with the given constant
  $c_1$.

  Conversely, assume $\nu \in L^\infty(\R^+,\exp_a)$. Then at $z \in
  \C$ with $\Re z > -a$ the Laplace transform of $\nu$, $k$ and
  $u(1,\cdot)$ are analytic and by the Volterra equation
  \eqref{eq:linear-order-volterra} satisfy
  \begin{equation*}
    (\lap \nu)(z) + (\lap k)(z) (\lap \nu)(z)
    = [\lap u(1,\cdot)](z).
  \end{equation*}
  Hence the LHS has roots $\lambda_1,\dots ,\lambda_n$ of
  multiplicities $p_1,\dots ,p_n$. Thus the RHS must have the same
  roots, i.e. $\alpha_{\lambda_i,j}(u) = 0$ for $i=1,\dots,n$ and
  $j=0,\dots,p_i-1$, which shows $u \in \mathcal{Y}^a_s$.

  The growth condition $\nu \in L^\infty(\R^+,\exp_a)$ under the
  linear evolution is invariant, so that $\mathcal{Y}^a_s$ is
  invariant under the linear evolution.
\end{proof}

We can use this to control the decay in
$\mathcal{Z}^a_s = \mathcal{Y}^a_s \cap \mathcal{Z}^a$ under the
linear evolution and the forcing term from a Duhamel equation.
\begin{lemma}
  \label{thm:convolution-integral}
  Assume the hypothesis of \Cref{thm:stable-subspace} with $a \ge
  0$. Then for $u \in \mathcal{Z}^a_s$ we have for $t \in \R^+$
  \begin{equation*}
    \| e^{tL} u \|_{\mathcal{Z}^a}
    \le c_1 e^{-at} \| u \|_{\mathcal{Z}^a}.
  \end{equation*}

  For $0 \le \mu < a$ consider $F \in C(\R^+ \times \N \times \R)$
  with $F(t,\cdot,\cdot) \in \mathcal{Y}^a_s$ for all $t \in \R^+$ and
  norm
  \begin{equation*}
    \| F \|_{\mathcal{Y}^a,-\mu}
    := \sup_{t\in\R^+} e^{-\mu t} \| F(t,\cdot,\cdot)
    \|_{\mathcal{Y}^a}
    < \infty.
  \end{equation*}
  Then
  \begin{equation*}
    v = \int_0^\infty e^{tL} F(t,\cdot,\cdot) \dd t
  \end{equation*}
  is well-defined and $v \in \mathcal{Z}^a_s$ with
  \begin{equation*}
    \| v \|_{\mathcal{Z}^a} \le c(\mu) \| F \|_{\mathcal{Y}^a,-\mu}
  \end{equation*}
  where $c(\mu) = (a-\mu)^{-1} c_1$.
\end{lemma}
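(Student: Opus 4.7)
The plan is to prove the two assertions separately, relying mainly on \Cref{thm:stable-subspace} and on the explicit formulas \eqref{eq:linear-order-first-mode}--\eqref{eq:linear-order-higher-modes} from \Cref{thm:linear-evolution}.

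For the first assertion I would split by spatial modes. The higher modes $l \ge 2$ undergo pure free transport, so a change of variable in the weighted supremum yields $\|(e^{tL}u)(l,\cdot)\|_{L^\infty(\R,\exp_a)} = e^{-atl}\|u(l,\cdot)\|_{L^\infty(\R,\exp_a)} \le e^{-at}\|u\|_{\mathcal{Z}^a}$, using $a \ge 0$ and $l \ge 1$. For the first mode, since $u(1,\cdot) \in \mathcal{Y}^a_s$, \Cref{thm:stable-subspace} provides both the pointwise bound $|\nu(s)|e^{as} \le c_1\|u\|_{\mathcal{Z}^a}$ for $\nu(s) = (e^{sL}u)(1,0)$ and the desired estimate on the restriction $\xi \ge 0$. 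For $\xi < 0$ I would invoke the Duhamel formula \eqref{eq:linear-order-first-mode}: the free-transport piece satisfies $e^{a\xi}|u(1,\xi+t)| = e^{-at}\cdot e^{a(\xi+t)}|u(1,\xi+t)| \le e^{-at}\|u\|_{\mathcal{Z}^a}$ at once, and the convolution term $\int_0^t \nu(s)\hat{g}(\xi+t-s)\dd s$ is handled by splitting the integration at $s=\max(0,\xi+t)$: where the argument of $\hat{g}$ is non-negative, I would use the $L^1(\R^+,\exp_a)$ decay of $\hat{g}$ against the exponential decay of $\nu$, and where it is negative, I would use the pointwise bound $|\hat{g}| \le 1$ (valid since $g$ is a probability measure) together with $e^{a\xi} \le e^{-at}$ on that range to produce the factor $e^{-at}$.

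For the second assertion I first observe that each integrand $e^{tL}F(t,\cdot,\cdot)$ lies in $\mathcal{Z}^a_s$: invariance of $\mathcal{Y}^a_s$ under $e^{tL}$ keeps the first mode in the stable subspace, while free transport regularises the higher modes so that $\|(e^{tL}F(t))(l,\cdot)\|_{L^\infty(\R,\exp_a)} \le l\,e^{-atl}\|F(t,\cdot,\cdot)\|_{\mathcal{Y}^a}$ from the definition of $\|\cdot\|_{\mathcal{Y}^a}$. Combining this with $\|F(t,\cdot,\cdot)\|_{\mathcal{Y}^a} \le e^{\mu t}\|F\|_{\mathcal{Y}^a,-\mu}$ and integrating in $t$ gives the mode-by-mode bounds $(c_1/(a-\mu))\|F\|_{\mathcal{Y}^a,-\mu}$ for $l = 1$ (via Part~1) and $(l/(al-\mu))\|F\|_{\mathcal{Y}^a,-\mu}$ for $l \ge 2$. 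A short algebraic check, equivalent to $\mu(c_1-l) \le la(c_1-1)$, shows $l/(al-\mu) \le c_1/(a-\mu)$ whenever $c_1 \ge 1$ and $0 \le \mu < a$, so taking the supremum over $l$ gives the claimed constant; well-posedness of the Bochner integral in $\mathcal{Z}^a$ follows since the integrand is continuous in $t$ with norm dominated by the integrable function $c_1 e^{(\mu-a)t}\|F\|_{\mathcal{Y}^a,-\mu}$. The closure $v \in \mathcal{Z}^a_s$ comes from pulling each continuous functional $\alpha_{\lambda_i,j}$ through the Bochner integral and using $\alpha_{\lambda_i,j}(e^{tL}F(t,\cdot,\cdot)) = 0$.

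The main technical delicacy is the first-mode estimate of Part~1 at $\xi < 0$: the $L^1(\R^+,\exp_a)$ control of $\hat{g}$ only operates on the positive half-line, so one must combine the pointwise bound $|\hat{g}| \le 1$ on the complementary region with a careful split of the convolution integral to recover the uniform factor $e^{-at}$ across all $\xi \in \R$. Once this is in place, Part~2 is a routine Bochner-integral argument with the mode-by-mode constants verified above.
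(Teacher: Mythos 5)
Your proof is correct and follows essentially the same route as the paper: split by spatial modes, use \Cref{thm:stable-subspace} for the first mode and the explicit free-transport formula \eqref{eq:linear-order-higher-modes} for $l\ge 2$, then integrate the mode-wise bounds $c_1 e^{(\mu-a)t}$ and $l\, e^{(\mu-al)t}$ and check $l/(al-\mu)\le c_1/(a-\mu)$. You are in fact more careful than the paper in two places it leaves implicit — the extension of the first-mode estimate from $\xi\ge 0$ (where \Cref{thm:stable-subspace} is stated) to all $\xi\in\R$, and the verification that $v\in\mathcal{Z}^a_s$ via the functionals $\alpha_{\lambda_i,j}$ — though for the negative-argument part of the convolution it is cleaner to use $|\hat{g}(-\sigma)|=|\hat{g}(\sigma)|$ and the resulting integrability on $\R^-$ rather than $|\hat{g}|\le 1$ alone, since the latter combined with the decay of $\nu$ only yields a constant of order $1/a$ and so does not cover the admissible endpoint $a=0$.
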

\begin{remark}
  The forcing $F$ is measured in the weaker norm $\mathcal{Y}^a$ and
  we are able to recover the control in the stronger norm
  $\mathcal{Z}^a$.
\end{remark}
\begin{proof}
  Since $u$ is in $\mathcal{Z}^a_s$, we find by
  \Cref{thm:stable-subspace} for $t \in \R^+$ and $\xi \in \R$
  \begin{equation*}
    e^{a\xi} |(e^{tL}u)(1,\xi)|
    \le c_1 e^{-at} \| u \|_{\mathcal{Z}^a}
  \end{equation*}
  and by \eqref{eq:linear-order-higher-modes} for $l \ge 2$
  \begin{equation*}
    e^{a\xi} | (e^{tL}u)(l,\xi) |
    \le e^{-alt} \| u \|_{\mathcal{Z}^a}
  \end{equation*}
  which shows the first claim.

  For the second part note that for every $t \in \R^+$ the integrand
  $e^{tL}F(t,\cdot,\cdot)$ is continuous. By
  \Cref{thm:stable-subspace} we can control for $\xi \in \R$
  \begin{equation*}
    e^{a \xi}
    |[e^{tL}F(t,\cdot,\cdot)](1,\xi)|
    \le c_1 e^{-at} \| F(t,\cdot,\cdot) \|_{\mathcal{Y}^a}
    \le c_1 e^{(\mu-a)t} \| F \|_{\mathcal{Y}^a,\mu}
  \end{equation*}
  and for $l \ge 2$ by \Cref{eq:linear-order-higher-modes}
  \begin{equation*}
    e^{a\xi}
    |[e^{tL}F(t,\cdot,\cdot)](l,\xi)|
    \le l e^{(\mu-al)t} \| F \|_{\mathcal{Y}^a,\mu}.
  \end{equation*}
  These bounds are uniformly integrable as
  \begin{equation*}
    \int_0^\infty c_1 e^{(\mu-a)t} \dd t = \frac{c_1}{a-\mu}
  \end{equation*}
  and for $l \ge 2$
  \begin{equation*}
    l \int_0^\infty e^{(\mu-al)t} \dd t = \frac{l}{al-\mu}
    = \frac{1}{a-\mu/l}.
  \end{equation*}
  Hence the integral is well-defined and defines a continuous function
  $v \in C(\N \times \R)$. Moreover, it shows the claimed control of
  $\|v\|_{\mathcal{Z}^a}$.
\end{proof}

In order to find the eigenmodes in $\mathcal{Z}^a$ and the projection
to them, we consider the linear generator $L$ on $\mathcal{Z}^a$.
\begin{lemma}
  Assume the hypothesis of \Cref{thm:stable-subspace} with $a >
  0$. Define the closed operator $L : D(L) \subset \mathcal{Z}^a
  \mapsto \mathcal{Y}^a$ with $D(L) = \{u \in C^1(\N \times \R) : \| u
  \|_{\mathcal{Z}^a} < \infty \}$ by
  \begin{equation*}
    (Lu)(1,\xi) = \partial_{\xi} u(1,\xi)
    + \frac{K}{2} u(1,0) \hat{g}(\xi)
  \end{equation*}
  and for $l \ge 2$ by
  \begin{equation*}
    (Lu)(1,\xi) = l \partial_{\xi} u(l,\xi).
  \end{equation*}
  Then in $\{ \lambda : \Re \lambda > -a\}$ the spectrum of $L$ equals
  $\sigma = \{ \lambda_1,\dots ,\lambda_n\}$. For $\lambda \not \in
  \sigma$ and $\Re \lambda > -a$ the resolvent $(L-\lambda)^{-1}$ is
  given by
  \begin{equation}
    \label{eq:resolvent-first-mode}
    [(L-\lambda)^{-1}v](1,\xi)
    = -
    \int_{\xi}^{\infty} e^{-\lambda(\zeta-\xi)}
    \left(v(1,\zeta) + \frac{K}{2} \;
    \frac{[\lap v(1,\cdot)](\lambda)}{1+(\lap k)(\lambda)} \;
    \hat{g}(\zeta) \right)\dd \zeta
  \end{equation}
  for $\xi \in \R$ where $k$ is the convolution kernel from
  \Cref{eq:linear-order-kernel} and for $l \ge 2$
  \begin{equation}
    \label{eq:resolvent-higher-modes}
    [(L-\lambda)^{-1}v](l,\xi)
    = - \frac{1}{l} \int_{\xi}^{\infty} e^{-(\lambda/l)(\zeta-\xi)}
      v(l,\zeta) \dd \zeta.
  \end{equation}
\end{lemma}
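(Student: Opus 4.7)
The plan is to solve the resolvent equation $(L-\lambda)u = v$ mode by mode, exploiting that $L$ decouples the modes $l\ge 2$ completely, while the first mode is coupled to itself only through the scalar $u(1,0)$.

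For $l\ge 2$, the equation reduces to the linear first-order ODE $l\partial_\xi u(l,\xi) - \lambda u(l,\xi) = v(l,\xi)$. Since $\Re\lambda/l + a > 0$ when $\Re\lambda > -a$ and $l\ge 2$, the homogeneous solution $Ce^{\lambda\xi/l}$ grows faster than $e^{-a\xi}$ at $+\infty$, forcing $C = 0$ for any $\mathcal{Z}^a$-solution. Variation of parameters (integrating from $+\infty$) then yields the unique candidate \eqref{eq:resolvent-higher-modes}.

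For $l = 1$, I would treat $c := u(1,0)$ as an unknown scalar parameter, turning the equation into the inhomogeneous ODE $\partial_\xi u(1,\xi) - \lambda u(1,\xi) = v(1,\xi) - \tfrac{K}{2}c\,\hat{g}(\xi)$. The same reasoning gives a unique $\mathcal{Z}^a$-candidate depending linearly on $c$, namely
\begin{equation*}
  u(1,\xi) = -\int_\xi^\infty e^{-\lambda(\zeta-\xi)}\bigl[v(1,\zeta) - \tfrac{K}{2}c\,\hat{g}(\zeta)\bigr]\,\dd\zeta.
\end{equation*}
Imposing the consistency relation $u(1,0) = c$ and using $\lap k(\lambda) = -\tfrac{K}{2}\lap\hat{g}(\lambda)$, this reduces to the scalar equation
\begin{equation*}
  \bigl(1 + \lap k(\lambda)\bigr)\, c \;=\; -[\lap v(1,\cdot)](\lambda) \;=\; -\alpha_{\lambda,0}(v),
\end{equation*}
which is uniquely solvable precisely when $1 + \lap k(\lambda) \neq 0$. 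Substituting $c$ back recovers \eqref{eq:resolvent-first-mode}.

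This construction gives a well-defined bounded inverse $(L-\lambda)^{-1}$ for every $\lambda$ with $\Re\lambda > -a$ and $\lambda\notin\{\lambda_1,\dots,\lambda_n\}$. Conversely, at each $\lambda_i$ the scalar consistency equation fails to have a solution for generic $v$, and moreover the homogeneous case $v = 0$ admits the nontrivial eigenfunction supported on the first mode given by $u(1,\xi) = \tfrac{K}{2}\int_\xi^\infty e^{-\lambda_i(\zeta-\xi)}\hat{g}(\zeta)\,\dd\zeta$ (with $u(l,\cdot) = 0$ for $l\ge 2$), which lies in $\mathcal{Z}^a$ thanks to $\hat{g}\in L^1(\R^+,\exp_a)$. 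Hence $\sigma(L)\cap\{\Re\lambda>-a\} = \{\lambda_1,\dots,\lambda_n\}$.

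The main technical point is checking that the resolvent actually maps $\mathcal{Y}^a$ into $\mathcal{Z}^a$. The functional $\alpha_{\lambda,0}$ is continuous on $\mathcal{Y}^a$ by the preceding lemma, so the coefficient $[\lap v(1,\cdot)](\lambda)/(1 + \lap k(\lambda))$ is controlled by $\|v\|_{\mathcal{Y}^a}$; the tail integrals $\int_\xi^\infty e^{-\lambda(\zeta-\xi)}(\cdots)\,\dd\zeta$ enjoy uniform $e^{a\xi}$-bounds because $\Re\lambda > -a$ and because both $\hat{g}$ and $v(1,\cdot)$ decay at rate $\exp_a$. This is the same type of computation already used in \Cref{thm:convolution-integral}, so no essentially new estimate is needed beyond bookkeeping.
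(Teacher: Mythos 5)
Your proposal is correct and follows essentially the same route as the paper: solve $(L-\lambda)u=v$ mode by mode, use the growth constraint $\Re\lambda/l+a>0$ to kill the homogeneous solution, and reduce the first mode to the scalar consistency equation $(1+\lap k(\lambda))u(1,0)=-[\lap v(1,\cdot)](\lambda)$, with the explicit eigenfunction $z_{\lambda_i,0}$ showing $\lambda_i\in\sigma(L)$. The only part of the statement you do not address is the closedness of $L$, which the paper dispatches in one line and which in any case follows from your construction of a bounded, everywhere-defined inverse at some $\lambda$.
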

\begin{proof}
  By classical analysis the differentiation and the uniform limit can
  be interchanged so that $L$ is closed.

  For the resolvent fix $\lambda \in \C$ with $\Re \lambda > -a$ and
  $\lambda \not \in \sigma$ and $v \in \mathcal{Y}^a$. We now look for
  solutions of $(L-\lambda)u=v$ in $u \in D(L) \subset \mathcal{Z}^a$.

  For $l \ge 2$ this implies
  \begin{equation*}
    l \partial_{\xi} u(l,\xi) - \lambda u(l,\xi)
    = v(l,\xi)
    \; \Rightarrow \;
    \partial_{\xi} \left( e^{-(\lambda/l)\xi} u(l,\xi) \right)
    = \frac{1}{l} v(l,\xi) e^{-(\lambda/l)\xi}.
  \end{equation*}
  Since $\Re \lambda > -a$ this shows that $u$ is uniquely given by
  \Cref{eq:resolvent-higher-modes}.

  For $l=1$ we likewise find that $u$ must satisfy
  \begin{equation*}
    u(1,\xi) = - \int_{\xi}^{\infty} e^{-\lambda(\zeta-\xi)}
    \left( v(1,\zeta) - \frac{K}{2} u(1,0) \hat{g}(\zeta) \right)
    \dd \zeta.
  \end{equation*}
  Hence by taking $\xi = 0$ we find that $u$ must satisfy
  \begin{equation*}
    u(1,0) \left(1 + (\lap k)(\lambda)\right) =
    - [\lap v(1,\cdot)](\lambda).
  \end{equation*}
  Since $\lambda \not \in \sigma$, this determines $u(1,0)$ uniquely
  and the solution is given by
  \Cref{eq:resolvent-first-mode}. Moreover, the found solution
  satisfies $\| u \|_{\mathcal{Z}^a} < \infty$. Hence such a $\lambda$
  is not in the spectrum of $L$ and the resolvent map takes the given
  form.

  If $\lambda \in \sigma$, the above shows that $L$ is not injective,
  so that $\sigma$ is in the spectrum.
\end{proof}

\Cref{eq:resolvent-first-mode} shows that at a root $\lambda$ of
multiplicity $p$ the residue of the resolvent map $(L-\lambda)^{-1}$
is in the space spanned by $\sspan{z_{\lambda,j} : j=0,\dots ,p-1}$
where for $\xi \in \R$
\begin{equation*}
  z_{\lambda,j}(1,\xi) = \frac{K}{2} \int_{\xi}^{\infty}
  e^{-\lambda(\zeta-\xi)} (\zeta-\xi)^j \hat{g}(\zeta) \dd \zeta
\end{equation*}
and for $l \ge 2$
\begin{equation*}
  z_{\lambda,j}(l,\xi) = 0.
\end{equation*}
Since we assume $\hat{g} \in L^1(\R,\exp_a)$, we always have
$z_{\lambda,j} \in D(L) \subset \mathcal{Z}^a$.
\begin{lemma}
  \label{thm:generalised-eigenvectors}
  Assume the hypothesis of \Cref{thm:stable-subspace} with $a >
  0$. Then for a root $\lambda$ with $\Re \lambda > -a$ of $1+(\lap
  k)(z)$ with multiplicity $p$, the elements $z_{\lambda,0},\dots
  ,z_{\lambda,p-1}$ are generalised eigenvectors of $L$ with
  eigenvalue $\lambda$.
\end{lemma}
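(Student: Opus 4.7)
The plan is to verify directly the relation $(L-\lambda) z_{\lambda,j} = -j\, z_{\lambda,j-1}$ (with the convention $z_{\lambda,-1}=0$), from which $(L-\lambda)^{j+1} z_{\lambda,j} = 0$ follows by a simple induction, showing that $z_{\lambda,j}$ is a generalised eigenvector of order $j+1$.

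Since $z_{\lambda,j}(l,\xi)=0$ for $l\ge 2$, the identity is trivial in those modes. For $l=1$, I would change variables $u=\zeta-\xi$ to rewrite
\begin{equation*}
  z_{\lambda,j}(1,\xi) = \frac{K}{2} \int_{0}^{\infty} e^{-\lambda u} u^j \hat{g}(u+\xi)\, \dd u
\end{equation*}
and then differentiate back under the integral in the original variable. Using Leibniz's rule on the integral from $\xi$ to $\infty$, the boundary contribution at $\zeta=\xi$ is $-\hat{g}(\xi)$ if $j=0$ and vanishes if $j\ge 1$ (because of the factor $(\zeta-\xi)^j$), while differentiating the integrand produces $\lambda e^{-\lambda(\zeta-\xi)}(\zeta-\xi)^j - j\,e^{-\lambda(\zeta-\xi)}(\zeta-\xi)^{j-1}$. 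This yields
\begin{equation*}
  \partial_\xi z_{\lambda,0}(1,\xi) = -\tfrac{K}{2}\hat{g}(\xi) + \lambda z_{\lambda,0}(1,\xi),
  \qquad
  \partial_\xi z_{\lambda,j}(1,\xi) = \lambda z_{\lambda,j}(1,\xi) - j\, z_{\lambda,j-1}(1,\xi)\ (j\ge 1).
\end{equation*}
The integrability of $\hat g\in L^1(\R,\exp_a)$ combined with $\Re\lambda>-a$ justifies differentiation under the integral.

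Next I would evaluate $z_{\lambda,j}(1,0)$. By the definition of the Laplace transform and of $k$,
\begin{equation*}
  z_{\lambda,j}(1,0) = \tfrac{K}{2}\int_0^\infty e^{-\lambda\zeta}\zeta^j\hat g(\zeta)\,\dd\zeta
  = (-1)^j \tfrac{K}{2}(\lap\hat g)^{(j)}(\lambda)
  = (-1)^{j+1} (\lap k)^{(j)}(\lambda).
\end{equation*}
Because $\lambda$ is a root of $1+(\lap k)(z)$ of multiplicity $p$, we have $(\lap k)(\lambda)=-1$ and $(\lap k)^{(j)}(\lambda)=0$ for $1\le j\le p-1$. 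Hence $z_{\lambda,0}(1,0)=1$ and $z_{\lambda,j}(1,0)=0$ for $1\le j\le p-1$.

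Plugging these into the definition of $L$ in the first mode,
\begin{equation*}
  (Lz_{\lambda,j})(1,\xi) = \partial_\xi z_{\lambda,j}(1,\xi) + \tfrac{K}{2}z_{\lambda,j}(1,0)\hat g(\xi),
\end{equation*}
the $j=0$ case gives $Lz_{\lambda,0}=\lambda z_{\lambda,0}$ (the $-\tfrac{K}{2}\hat g$ term cancels exactly against the interaction term since $z_{\lambda,0}(1,0)=1$), and the $j\ge 1$ case gives $(L-\lambda)z_{\lambda,j} = -j\, z_{\lambda,j-1}$. Iterating, $(L-\lambda)^j z_{\lambda,j} = (-1)^j j!\, z_{\lambda,0}\ne 0$ and $(L-\lambda)^{j+1}z_{\lambda,j}=0$, proving the claim. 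The only mildly delicate point is the cancellation in the $j=0$ case, which is precisely arranged by the root condition $1+(\lap k)(\lambda)=0$; the rest is a direct computation.
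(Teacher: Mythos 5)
Your proof is correct and follows essentially the same route as the paper: evaluate $z_{\lambda,j}(1,0)$ via the Laplace transform and the multiplicity-$p$ root condition to get $1$ for $j=0$ and $0$ for $1\le j\le p-1$, then verify by direct differentiation that $(L-\lambda)z_{\lambda,j}=-j\,z_{\lambda,j-1}$. The paper states these two computations without the Leibniz details; your write-up simply makes them explicit and adds the (routine) induction to $(L-\lambda)^{j+1}z_{\lambda,j}=0$.
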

\begin{proof}
  Note that for $j=0,\dots,p-1$
  \begin{equation*}
    z_{\lambda,j}(1,0) = - \left.\left(\frac{\dd}{\dd z}\right)^j
      (\lap k)(z)\right|_{z=\lambda}
    =
    \begin{cases}
      1 & j=0, \\
      0 & \text{otherwise}.
    \end{cases}
  \end{equation*}
  Then by direct computation
  \begin{equation*}
    L z_{\lambda,j}
    =
    \begin{cases}
      \lambda z_{\lambda,j} & j=0, \\
      \lambda z_{\lambda,j} - j z_{\lambda,j-1} & \text{otherwise}.
    \end{cases}
    \qedhere
  \end{equation*}
\end{proof}

Using holomorphic functional calculus, we can construct the required
projection.
\begin{lemma}
  \label{thm:linear-projections}
  Assume the hypothesis of \Cref{thm:stable-subspace} with $a >
  0$. There exists a continuous projection $P_{cu} : \mathcal{Y}^a
  \mapsto \mathcal{Z}^a_{cu}$ such that the complementary projection
  $P_s = I - P_{cu}$ maps to $\mathcal{Y}^a_s$.
\end{lemma}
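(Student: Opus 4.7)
The natural approach is Dunford--Riesz holomorphic functional calculus applied to the meromorphic resolvent map from the preceding lemma. Since in $\{\Re z > -a\}$ the spectrum of $L$ consists of the finite set $\sigma = \{\lambda_1, \dots, \lambda_n\}$, I fix a closed positively oriented rectifiable contour $\Gamma \subset \{\Re z > -a\}$ enclosing all of $\sigma$ and bounded away from it, and set
\begin{equation*}
  P_{cu} v := \frac{1}{2\pi i} \oint_{\Gamma} (L-\lambda)^{-1} v \, \dd\lambda,
  \qquad v \in \mathcal{Y}^a.
\end{equation*}

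For continuity from $\mathcal{Y}^a$ into $\mathcal{Z}^a$ I read off the resolvent formulae \eqref{eq:resolvent-first-mode}--\eqref{eq:resolvent-higher-modes}. The scalar $[\lap v(1,\cdot)](\lambda)$ is controlled by $(\Re\lambda + a)^{-1}\|v\|_{\mathcal{Y}^a}$, the denominator $1 + (\lap k)(\lambda)$ is bounded away from zero on $\Gamma$, and $\hat{g} \in L^1(\R^+, \exp_a)$ provides the decay of the $\hat{g}$-tail integral in $\xi$; compactness of $\Gamma$ then yields a uniform operator-norm bound of the integrand in $\mathcal{B}(\mathcal{Y}^a, \mathcal{Z}^a)$.

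To identify the image: for $l \ge 2$, \eqref{eq:resolvent-higher-modes} is analytic in $\lambda$ throughout $\{\Re\lambda > -a\}$, so $[P_{cu} v](l,\cdot) = 0$; for $l=1$ the $v(1,\zeta)$-term in \eqref{eq:resolvent-first-mode} is similarly analytic and integrates to zero, while the remaining contribution, a product of an $\hat g$-tail integral with $[\lap v(1,\cdot)](\lambda)/(1+(\lap k)(\lambda))$, produces by the residue theorem a finite linear combination of the generalised eigenvectors $z_{\lambda_i,j}$ of \Cref{thm:generalised-eigenvectors}, showing $\operatorname{Im} P_{cu} \subset \mathcal{Z}^a_{cu}$. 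That $P_{cu}^2 = P_{cu}$ is the classical Riesz identity: I nest a second contour $\Gamma'$ around $\Gamma$, then apply Fubini and the resolvent identity $(L-\lambda)^{-1}(L-\mu)^{-1} = (\mu-\lambda)^{-1}[(L-\mu)^{-1} - (L-\lambda)^{-1}]$ on $\Gamma \times \Gamma'$, where the composition is meaningful thanks to the continuous inclusion $\mathcal{Z}^a \hookrightarrow \mathcal{Y}^a$, and I use analyticity of each resolvent in the appropriate outside/inside region to collapse one of the two integrals.

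Finally, $P_s = I - P_{cu}$ maps into $\mathcal{Y}^a_s$: by construction the Laplace transform of $(P_s v)(1,\cdot)$, namely $[\lap v(1,\cdot)](z) - [\lap (P_{cu} v)(1,\cdot)](z)$, extends analytically through each $\lambda_i$ because the residue subtraction was designed precisely to cancel the principal parts of $[\lap v(1,\cdot)](z)/(1+(\lap k)(z))$ at those points, so $\alpha_{\lambda_i,j}(P_s v) = 0$ for every $j = 0,\dots,p_i-1$. The main technical point to handle carefully is that $L$ is unbounded with mismatched domain $D(L) \subset \mathcal{Z}^a$ and codomain $\mathcal{Y}^a$, so the functional calculus cannot be cited as a black box; the resolvent identity in the $P_{cu}^2 = P_{cu}$ computation must be justified through the inclusion $\mathcal{Z}^a \hookrightarrow \mathcal{Y}^a$, and the identification of the principal parts in the final step must be verified directly from the explicit resolvent formula rather than appealing to a general spectral decomposition.
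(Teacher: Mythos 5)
This is the paper's own argument: the Dunford--Riesz contour integral of the explicit resolvent, with the image identified from \Cref{eq:resolvent-first-mode} by residues and the complementary range characterised through the vanishing of the functionals $\alpha_{\lambda_i,j}$; your additional care about the $\mathcal{Z}^a\hookrightarrow\mathcal{Y}^a$ domain mismatch and the principal-part cancellation merely fills in details the paper leaves implicit. The one slip is the sign: with a positively oriented contour and the resolvent written as $(L-\lambda)^{-1}$, the Riesz projection is $-\tfrac{1}{2\pi i}\oint_{\Gamma}(L-\lambda)^{-1}\,\dd\lambda$ (as in the paper), so your formula defines $-P_{cu}$ and satisfies $Q^2=-Q$ rather than $Q^2=Q$.
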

\begin{proof}
  Take $\gamma$ a contour in $\{z \in \C : \Re z > -a\}$ encircling
  all roots $\lambda_1,\dots ,\lambda_n$ once and define
  \begin{equation*}
    P_{cu}(u)
    = \frac{-1}{2\pi i} \oint_{\gamma} (L-\lambda)^{-1}(u) \dd \lambda.
  \end{equation*}
  Then $P_{cu}$ is a continuous projection. Since its value is given
  by the residues, \Cref{eq:resolvent-first-mode} shows that $P_{cu}$
  maps into $\mathcal{Z}^a_{cu}$.

  On the image of the complementary projection $P_s$ the resolvent map
  $(L-\lambda)^{-1}$ is continuous for $\Re \lambda > -a$. By
  \Cref{eq:resolvent-first-mode} this can only be true if for all
  roots $\lambda_1,\dots ,\lambda_n$ with multiplicities $p_1,\dots
  ,p_n$ also $\lap u(1,\cdot)$ has a root of matching multiplicity,
  i.e. $\alpha_{\lambda_i,j}$ must be vanishing for $i=1,\dots,n$ and
  $j=0,\dots,p_i-1$. Hence the image is within $\mathcal{Y}^a_s$.
\end{proof}

\section{Center-unstable manifold reduction}
\label{sec:center-unstable-reduction}

Since the nonlinearity $R$ is smooth from $\mathcal{Z}^a$ to
$\mathcal{Y}^a$, the center-unstable manifold reduction follows from
the decay on the stable part (\Cref{thm:convolution-integral}) and the
continuous projections (\Cref{thm:linear-projections}). The
center-manifold reduction in Banach spaces is discussed in
\cite[Chapter 2]{vanderbauwhede1992center}, which can be adapted to
our case with a weak PDE as Duhamel's principle holds again. Moreover,
instead of a center manifold reduction we do a center-unstable
manifold reduction \cite[Section 1.5]{vanderbauwhede1989centre} as
this implies convergence to the reduced manifold. The statements are
also discussed in \cite{haragus2011local}.

In order to understand the bifurcation behaviour, the center-unstable
manifold is constructed with $\epsilon = K-K_c$ as additional
parameter. Furthermore, we need to localise the non-linearity around
the incoherent state. This localisation might affect the resulting
reduced manifold, which is not unique. Moreover, we choose the
localisation to preserve the rotation symmetry of the problem so that
the resulting manifold has the same symmetry.

Hence we study the solution in
$\tilde{\mathcal{Z}}^a = \mathcal{Z}^a \times \R$ and
$\tilde{\mathcal{Y}}^a = \mathcal{Y}^a \times \R$ with norms
$\|(u,\epsilon)\|_{\tilde{\mathcal{Z}}^a} = \max(\| u
\|_{\mathcal{Z}^a},|\epsilon|)$
and
$\|(u,\epsilon)\|_{\tilde{\mathcal{Y}}^a} = \max(\| u
\|_{\mathcal{Y}^a},|\epsilon|)$.
In these define the linear evolution operator $e^{t\tilde{L}}$ as
\begin{equation*}
  e^{t\tilde{L}} \tilde{u}
  = e^{t\tilde{L}} (u,\epsilon)
  = (e^{tL} u, \epsilon).
\end{equation*}
The center-unstable space $\tilde{Z}^a_{cu}$ is now spanned by $(0,1)$
and $(z_{\lambda_i,j},0)$ for $i=1,\dots,n$ and $j=0,\dots
,p_i-1$.
The projection $\tilde{P}_{cu}$ is defined as
$\tilde{P}_{cu}(u,\epsilon) = (P_{cu}u,\epsilon)$ with $P_{cu}$ from
\Cref{thm:linear-projections}. The complementary projection
$\tilde{P}_s$ maps into the stable part
$\tilde{\mathcal{Y}}^a_s = \mathcal{Y}^a_s \times \{ 0 \}$.

For the center-unstable manifold reduction the used properties of the
linear evolution are collected in the following lemma.
\begin{lemma}
  \label{thm:center-unstable-linear}
  Assume the hypothesis of \Cref{thm:center-unstable-kuramoto}. Then
  there exists a continuous projection
  $\tilde{P}_{cu} : \mathcal{Y}^a \mapsto \tilde{\mathcal{Z}}^a_{cu}$
  with complementary projection $\tilde{P}_s = I - \tilde{P}_{cu}$
  mapping $\tilde{\mathcal{Y}}^a$ to $\tilde{\mathcal{Y}}^a_{s}$. The
  subspaces $\tilde{\mathcal{Z}}^a_{cu}$ and $\tilde{\mathcal{Y}}^a_s$
  are invariant under the linear evolution. On
  $\tilde{\mathcal{Z}}^a_{cu}$ the linear evolution acts as
  finite-dimensional matrix exponential whose generator has spectrum
  $\lambda_1,\dots ,\lambda_n$. On
  $\tilde{\mathcal{Z}}^a_{s} = \tilde{P}_s \tilde{\mathcal{Z}}^a$
  there exists a constant $c_1$ such that for $t \in \R^+$ and
  $u \in \tilde{\mathcal{Z}}^a_s$
  \begin{equation*}
    \| e^{t\tilde{L}} u \|_{\tilde{\mathcal{Z}}^a}
    \le c_1 e^{-at} \| u \|_{\tilde{\mathcal{Z}}^a}
  \end{equation*}
  and there exists a continuous function $c : [0,a) \mapsto \R^+$
  such that for $\mu \in [0,a)$ and
  $F \in C(\R^+ \times \N \times \R \times \R)$ with
  $F(t,\cdot) \in \tilde{\mathcal{Y}}^a_s$ for $t \in \R^+$ and norm
  \begin{equation*}
    \| F \|_{\mathcal{Y}^a,-\mu}
    := \sup_{t\in\R^+} e^{-\mu t} \| F(t,\cdot)
    \|_{\tilde{\mathcal{Y}}^a}
    < \infty,
  \end{equation*}
  the integral
  \begin{equation*}
    v = \int_0^\infty e^{t\tilde{L}} F(t,\cdot) \dd t
  \end{equation*}
  is well-defined and $v \in \tilde{\mathcal{Z}}^a_s$ with
  \begin{equation*}
    \| v \|_{\tilde{\mathcal{Z}}^a} \le c(\mu) \| F \|_{\tilde{\mathcal{Y}}^a,-\mu}.
  \end{equation*}
\end{lemma}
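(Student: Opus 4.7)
The plan is to derive each assertion componentwise from the corresponding statements on $\mathcal{Z}^a$ and $\mathcal{Y}^a$ already proved in \Cref{thm:linear-projections,thm:convolution-integral,thm:generalised-eigenvectors}, exploiting that the augmented evolution decouples as $e^{t\tilde{L}}(u,\epsilon) = (e^{tL}u,\epsilon)$, so the $\epsilon$-factor is a trivial invariant line added to the system.

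First I would set $\tilde{P}_{cu}(u,\epsilon) := (P_{cu}u,\epsilon)$ with $P_{cu}$ from \Cref{thm:linear-projections}; continuity on $\tilde{\mathcal{Y}}^a$ follows immediately from continuity of $P_{cu}$ on $\mathcal{Y}^a$ together with continuity of the identity on the $\R$-factor, and the image equals $\mathcal{Z}^a_{cu} \oplus \R\cdot(0,1) = \tilde{\mathcal{Z}}^a_{cu}$. The complementary projection is then $\tilde{P}_s(u,\epsilon) = (P_s u, 0)$, which lands in $\mathcal{Y}^a_s \times \{0\} = \tilde{\mathcal{Y}}^a_s$. The placement of the $\epsilon$-direction in the center-unstable part (rather than the stable one) is exactly the reason for choosing $\tilde{\mathcal{Y}}^a_s = \mathcal{Y}^a_s \times \{0\}$ above, and is what makes $\epsilon$ available as a bifurcation parameter in the subsequent reduction.

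For invariance and the spectral statement, I note that $\mathcal{Z}^a_{cu}$ and $\mathcal{Y}^a_s$ are invariant under $e^{tL}$ while the $\R$-factor is trivially preserved, so both $\tilde{\mathcal{Z}}^a_{cu}$ and $\tilde{\mathcal{Y}}^a_s$ are invariant under $e^{t\tilde{L}}$. On the finite-dimensional space $\tilde{\mathcal{Z}}^a_{cu}$ the generator decomposes as $L\!\mid_{\mathcal{Z}^a_{cu}} \oplus\, 0$; by \Cref{thm:generalised-eigenvectors} the vectors $z_{\lambda_i,j}$ are generalised eigenvectors of $L\!\mid_{\mathcal{Z}^a_{cu}}$ with eigenvalues $\lambda_i$, and together with $(0,1)$ they span $\tilde{\mathcal{Z}}^a_{cu}$, so $e^{t\tilde L}$ acts as a matrix exponential with the stated spectrum (augmented by the eigenvalue $0$ from the parameter direction, which lies in the center-unstable part since $\Re \lambda_i \ge 0$ by hypothesis).

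Finally both decay statements are immediate corollaries of \Cref{thm:convolution-integral}. For $u = (u_0,0) \in \tilde{\mathcal{Z}}^a_s$ one has $\|e^{t\tilde L}u\|_{\tilde{\mathcal{Z}}^a} = \|e^{tL}u_0\|_{\mathcal{Z}^a} \le c_1 e^{-at}\|u_0\|_{\mathcal{Z}^a}$. For $F(t,\cdot) = (F_0(t,\cdot),0) \in \tilde{\mathcal{Y}}^a_s$ the Duhamel integral collapses to $\left(\int_0^\infty e^{tL} F_0(t,\cdot) \dd t,\, 0\right)$, and the second part of \Cref{thm:convolution-integral} applies verbatim with the same continuous function $c(\mu)=(a-\mu)^{-1}c_1$. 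In short the proof is a bookkeeping exercise: no genuine obstacle arises, and the only care required is to track the product structure so that the $\epsilon$-coordinate is consistently treated as a center-unstable direction; this packaging is precisely what is needed to feed the abstract center-unstable manifold theorem of \cite{vanderbauwhede1992center,haragus2011local} in \Cref{sec:center-unstable-reduction}.
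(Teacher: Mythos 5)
Your proof is correct and follows exactly the paper's route: the paper's own proof is the single observation that the evolution on the $\epsilon$-factor is constant, so everything reduces componentwise to \Cref{thm:linear-projections}, \Cref{thm:generalised-eigenvectors} and \Cref{thm:convolution-integral}, which is precisely the bookkeeping you carry out (your aside that the augmented generator also picks up the eigenvalue $0$ from the parameter direction is an accurate refinement of the lemma's slightly loose spectral statement).
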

\begin{proof}
  The linear evolution on $\epsilon$ is constant so that the
  properties follow from the study of $e^{tL}$,
  i.e. \Cref{thm:convolution-integral,thm:generalised-eigenvectors,thm:linear-projections}.
\end{proof}

In the extended dynamics for $\tilde{u}$ in $\tilde{\mathcal{Z}}^a$,
the nonlinearity takes the form
$\tilde{R} : \tilde{\mathcal{Z}}^a \mapsto \tilde{Y}^a$ given by
\begin{equation*}
  \tilde{R}(\tilde{u}) = (N(\tilde{u}),0)
\end{equation*}
where $N : \tilde{\mathcal{Z}}^a \mapsto \mathcal{Y}^{a}$ is for
$\tilde{u} = (u,\epsilon)$ defined by
\begin{equation*}
  N(\tilde{u})(1,\xi) =
  \frac{\epsilon}{2} u(1,0) \hat{g}(\xi)
  - \frac{(K_c + \epsilon)}{2} \conj{u(1,0)} u(2,\xi)
\end{equation*}
for $\xi \in \R$ and for $l \le 2$
\begin{equation*}
  N(\tilde{u})(l,\xi) =
  \frac{(K_c + \epsilon)l}{2}
  \left(u(1,0) u(l-1,\xi) - \conj{u(1,0)} u(l+1,\xi) \right).
\end{equation*}

For the localisation let $\chi : \R^+ \mapsto [0,1]$ be a smooth
function with $\chi(x) = 1$ for $x \le 1$ and $\chi(x) = 0$ for
$x \ge 2$. Then let $\chi_{\delta}(x) = \chi(x/\delta)$ and
$s_\delta(x) = x \chi_{\delta}(|x|)$. With this define the localised
nonlinearity
$\tilde{R}_{\delta} : \tilde{\mathcal{Z}}^a \mapsto \tilde{\mathcal{Y}}^a$ by
\begin{equation*}
  \tilde{R}_{\delta}(\tilde{u}) = (N_{\delta}(\tilde{u}),0)
\end{equation*}
where $N_{\delta} : \tilde{\mathcal{Z}}^a \mapsto \mathcal{Y}^{a}$ is for
$\tilde{u} = (u,\epsilon)$ defined by
\begin{equation*}
  N_{\delta}(\tilde{u})(1,\xi) =
  \frac{s_{\delta}(\epsilon)}{2} s_{\delta}(u(1,0)) \hat{g}(\xi)
  - \frac{(K_c + s_{\delta}(\epsilon))}{2}
  s_{\delta}(\conj{u(1,0)}) s_{\delta}(u(2,\xi))
\end{equation*}
for $\xi \in \R$ and for $l\ge 2$
\begin{equation*}
  N_{\delta}(\tilde{u})(l,\xi) =
  \frac{(K_c + s_{\delta}(\epsilon))l}{2}
  \left(s_{\delta}(u(1,0)) s_{\delta}(u(l-1,\xi))
    - s_{\delta}(\conj{u(1,0)}) s_{\delta}(u(l+1,\xi)) \right).
\end{equation*}

Then for $\delta > 0$, the localised nonlinearity $\tilde{R}_{\delta}$
agrees with the original nonlinearity of the mean-field equation for
$\| \tilde{u} \|_{\tilde{\mathcal{Z}}^a} \le \delta$, is in
$C^\infty(\tilde{\mathcal{Z}}^a, \tilde{\mathcal{Y}}^a)$ with
$\tilde{R}_{\delta}(0) = D \tilde{R}_{\delta}(0) = 0$ and is bounded
and Lipschitz continuous with constants tending to $0$ as
$\delta \to 0$.

We now study the evolution PDE
\begin{equation}
  \label{eq:localised-evolution}
  \begin{cases}
    \partial_t u(t,1,\xi) = \partial_{\xi} u(t,1,\xi)
    + \frac{K_c}{2} u(t,1,0) \hat{g}(\xi)
    + N_{\delta}(\tilde{u})(1,\xi) ,\\
    \partial_t u(t,l,\xi) = l \partial_{\xi} u(t,l,\xi)
    + N_{\delta}(\tilde{u})(l,\xi)
    & \text{for $l\ge 2$}, \\
    \partial_t \epsilon(t) = 0,
  \end{cases}
\end{equation}
for
$\tilde{u} = (u,\epsilon) \in C(\Delta \times \N \times \R \times \R)$
understood weakly against test functions in $C^1_0$ and where
$\Delta \subset \R$ is a time interval.

\begin{lemma}
  A solution $\tilde{u}$ of \Cref{eq:localised-evolution} satisfies
  Duhamel's formula, i.e. for $t_0,t_1 \in \Delta$ with $t_0 \le t_1$
  the following holds
  \begin{equation*}
    \tilde{u}(t_1) = e^{(t_1-t_0)\tilde{L}} \tilde{u}(t_0)
    + \int_{t_0}^{t_1} e^{(t_1-s)\tilde{L}} \tilde{R}_{\delta}(\tilde{u}(s))
    \dd s.
  \end{equation*}
\end{lemma}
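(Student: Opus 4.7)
The plan is to reduce the assertion to the already-established Duhamel formulas for the linear evolution and for free transport. Write \eqref{eq:localised-evolution} schematically as $\partial_t \tilde u = \tilde L \tilde u + \tilde R_\delta(\tilde u)$, and regard $F(s) := \tilde R_\delta(\tilde u(s))$ as a known forcing term. Since $\tilde u \in C(\Delta \times \N \times \R \times \R)$ and $\tilde R_\delta$ is defined pointwise in $(l,\xi)$ by a continuous combination of values of $\tilde u$ at the same $(s,\xi)$ (and at $(s,0)$ in the first mode), the map $(s,l,\xi) \mapsto F(s)(l,\xi)$ is continuous on $\Delta \times \N \times \R$. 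Hence $F$ is an admissible forcing in the same weak sense used in \Cref{thm:kuramoto-fourier-duhamel-free} and \Cref{thm:duhamel-linear}.

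Next, I would observe that the weak formulation of \eqref{eq:localised-evolution} tested against $h \in C^1_0(\R \times \N \times \R)$ becomes precisely the weak formulation of the linear forced equation $\partial_t v = \tilde L v + F$ once the nonlinear terms are absorbed into $F$ (and the $\epsilon$ component is constant under the evolution so the coupled system decouples trivially on that factor). The linear forced Cauchy problem with initial data $\tilde u(t_0)$ given at time $t_0$ has a unique continuous solution. To identify this solution explicitly with the Duhamel integral involving $e^{t\tilde L}$, split $\tilde L = L_{\rm tr} + L_{\rm int}$, where $L_{\rm tr}$ is the pure free-transport part in every mode $l$ and $L_{\rm int}$ is the single interaction $(K_c/2)\, u(1,0)\,\hat g(\xi)$ acting only in the first mode. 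Applying \Cref{thm:kuramoto-fourier-duhamel-free}-type Duhamel against the combined forcing $L_{\rm int}\tilde u + F$ yields an integral equation which, upon evaluating at $\xi=0$ in the first mode, reduces to a (forced) Volterra equation of exactly the type treated in \Cref{thm:linear-evolution}. Solving that Volterra equation and substituting back shows that the solution is $e^{(t_1-t_0)\tilde L}\tilde u(t_0) + \int_{t_0}^{t_1} e^{(t_1-s)\tilde L} F(s)\,\dd s$, which is the claimed formula.

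The only mild subtlety is ensuring that the integrand $e^{(t_1-s)\tilde L} F(s)$ is actually Bochner/pointwise integrable with respect to $s$, but this is immediate from continuity of $F$ in $s$, continuity of the linear evolution operator, and the bound $F(s)(l,\xi) = 0$ for $l$ large in each fixed compact range of $s$ coming from the cutoff $\chi_\delta$ (which keeps $\|\tilde u(s)\|_{\tilde{\mathcal Z}^a}$ from entering the computation in an uncontrolled way). Uniqueness of the continuous solution to the linear forced weak PDE then forces equality of the two sides, as in the proof of \Cref{thm:duhamel-linear}. I expect no substantial obstacle here: the argument is the same ``move the nonlinearity to the right-hand side and invoke linear well-posedness'' technique already used twice in the paper, and all the ingredients (existence and uniqueness of the linear semigroup, free-transport Duhamel, the Volterra solvability) are available.
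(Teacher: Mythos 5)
Your argument is essentially the paper's own (one-line) proof: regard $\tilde R_\delta(\tilde u(s))$ as a forcing term and invoke uniqueness of the forced linear problem, whose solution is given exactly by the Duhamel integral. One small inaccuracy in your integrability remark: $F(s)(l,\xi)$ does \emph{not} vanish for large $l$ (the cutoff $s_\delta$ only bounds the factors by $2\delta$, while the prefactor $l$ in $N_\delta$ grows), but the integrand is still integrable because the forcing is controlled in $\mathcal{Y}^a$ and the linear evolution decays faster in higher modes, as in \Cref{thm:convolution-integral}.
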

\begin{proof}
  Consider $\tilde{R}_{\delta}(\tilde{u})$ as forcing. Then the linear
  problem has a unique solution given by the formula.
\end{proof}

\begin{lemma}
  Let $\tilde{u}$ be a solution of
  \Cref{eq:localised-evolution}. Given
  $\tilde{u}(t_0) \in \tilde{\mathcal{Z}}^a$ the solution is uniquely
  determined at later times $t \ge t_0$ and
  $\| \tilde{u}(t) \|_{\tilde{\mathcal{Z}}^a}$ is uniformly bounded
  for compact intervals of later times. If $\delta$ is small enough,
  there exists, given $\tilde{u}(t_0) \in \tilde{\mathcal{Z}}^a$, a
  global solution for $t \ge t_0$.
\end{lemma}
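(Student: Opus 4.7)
The plan is to recast \Cref{eq:localised-evolution} via the preceding Duhamel lemma as the fixed-point equation
\begin{equation*}
  \tilde{u}(t) = e^{(t-t_0)\tilde{L}} \tilde{u}(t_0)
  + \int_{t_0}^t e^{(t-s)\tilde{L}} \tilde{R}_\delta(\tilde{u}(s)) \, \dd s,
\end{equation*}
and then to run a contraction argument in a suitable exponentially weighted space. The cornerstone will be a linear Duhamel estimate
\begin{equation*}
  \left\| \int_{t_0}^t e^{(t-s)\tilde{L}} F(s) \, \dd s
  \right\|_{\tilde{\mathcal{Z}}^a}
  \le C(t-t_0) \sup_{s \in [t_0, t]} \|F(s)\|_{\tilde{\mathcal{Y}}^a}
\end{equation*}
valid for continuous forcings $F \in C([t_0, t_0+T], \tilde{\mathcal{Y}}^a)$, where $C(\tau)$ grows at most like $e^{\Lambda^+ \tau}$ with $\Lambda = \max_i \Re \lambda_i$. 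To prove it I split via the projections from \Cref{thm:center-unstable-linear}: on the finite-dimensional $\tilde{\mathcal{Z}}^a_{cu}$ the matrix exponential furnishes a bound of the right form; on the stable part, the contribution of the $l$th spatial mode to the integral is bounded by $\int_{t_0}^t l e^{-al(t-s)} \,\dd s \le 1/a$ uniformly in $l$, in the spirit of \Cref{thm:convolution-integral}.

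For uniqueness, two solutions $\tilde{u}_1, \tilde{u}_2$ sharing the same initial value at $t_0$ give a difference satisfying the above Duhamel equation with right-hand side $\tilde{R}_\delta(\tilde{u}_1) - \tilde{R}_\delta(\tilde{u}_2)$; the Lipschitz bound $\| \tilde{R}_\delta(\tilde{u}) - \tilde{R}_\delta(\tilde{v})\|_{\tilde{\mathcal{Y}}^a} \le L(\delta) \| \tilde{u} - \tilde{v}\|_{\tilde{\mathcal{Z}}^a}$ together with the Duhamel estimate and Gronwall's inequality then forces $\tilde{u}_1 \equiv \tilde{u}_2$. For the compact-interval bound, the cutoff $s_\delta$ is uniformly bounded by $2\delta$, so $\sup_{\tilde{u}} \|\tilde{R}_\delta(\tilde{u})\|_{\tilde{\mathcal{Y}}^a} \le M(\delta) < \infty$; plugging into the Duhamel identity and using that $\|e^{(t-t_0)\tilde{L}} \tilde{u}(t_0)\|_{\tilde{\mathcal{Z}}^a}$ grows only exponentially yields $\|\tilde{u}(t)\|_{\tilde{\mathcal{Z}}^a} \le C_{\mathrm{lin}}(t-t_0) \|\tilde{u}(t_0)\|_{\tilde{\mathcal{Z}}^a} + C(t-t_0) M(\delta)$, finite on any compact interval.

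For global existence I fix $\beta > \max(\Lambda, 0)$ and work in the exponentially weighted space
\begin{equation*}
  \mathcal{E}_\beta = \left\{ \tilde{u} \in C([t_0,\infty), \tilde{\mathcal{Z}}^a)
    : \sup_{t \ge t_0} e^{-\beta(t-t_0)} \|\tilde{u}(t)\|_{\tilde{\mathcal{Z}}^a} < \infty \right\}.
\end{equation*}
The Duhamel estimate, applied in this weighted norm, shows that the fixed-point operator is a contraction with constant of order $L(\delta)/(\beta - \Lambda)$; since $L(\delta) \to 0$ as $\delta \to 0$, taking $\delta$ small enough makes this $< 1$, and Banach's fixed-point theorem supplies the global solution. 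By the uniqueness established above, this solution necessarily coincides on overlaps with any locally constructed one.

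The main obstacle is the Duhamel estimate itself: the propagator $e^{r\tilde{L}}$ does \emph{not} map $\tilde{\mathcal{Y}}^a$ boundedly into $\tilde{\mathcal{Z}}^a$ uniformly in $r$, since its operator norm blows up like $1/r$ as $r \to 0^+$. The desired control only emerges after integration in time, through the precise cancellation between the factor $l$ built into the $\tilde{\mathcal{Y}}^a$ norm and the free-transport decay $e^{-alr}$ on the $l$th spatial mode; a similar cancellation already underlies \Cref{thm:convolution-integral} and is the analytic heart of the argument.
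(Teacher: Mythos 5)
Your proposal is correct and follows essentially the same route as the paper: recast the problem via the Duhamel formula, split by the projections $\tilde{P}_{cu}$, $\tilde{P}_s$ from \Cref{thm:center-unstable-linear}, use the boundedness of $\tilde{R}_{\delta}$ for the compact-interval bound, its Lipschitz property with Gronwall for uniqueness, and a contraction (the paper's Picard iteration, your exponentially weighted fixed point) for global existence when $\delta$ is small. The paper's proof is just a terser version of the same argument, and your identification of the $l\,e^{-al(t-s)}$ cancellation as the analytic heart matches how \Cref{thm:convolution-integral} is used.
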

\begin{proof}
  The projections commute with the linear evolution in the Duhamel
  formula and are bounded. Since $\tilde{R}_{\delta}$ is bounded, this
  shows with the control of the linear evolution
  (\Cref{thm:center-unstable-linear}) that
  $\| \tilde{u}(t) \|_{\tilde{\mathcal{Z}}^a}$ is uniformly bounded
  for compact time-intervals of later times. Furthermore,
  $\tilde{R}_{\delta}$ is Lipschitz so that the Duhamel formulation
  gives a Gronwall control showing uniqueness.

  If $\delta$ is small enough, then the Picard iteration
  \begin{equation*}
    \tilde{u}
    \mapsto e^{(t-t_0)\tilde{L}} \tilde{u}(t_0) + \int_{t_0}^{t}
    e^{(t-s)\tilde{L}} \tilde{R}_{\delta}(\tilde{u}(s)) \dd s
  \end{equation*}
  is a contraction, so that a solution exists.
\end{proof}

With this we can construct the reduced manifold. We want to capture
the evolution in the reduced manifold where the stable part has
decayed, which we can characterise by the growth as $t \to - \infty$.

\begin{thm}
  \label{thm:center-unstable-localised}
  Assume the hypothesis of \Cref{thm:center-unstable-kuramoto}. There
  exists $\mu \in (0,a)$ and $\delta_b$ such that for
  $\delta \le \delta_b$ there exists
  $\tilde{\psi} \in C^k(\tilde{\mathcal{Z}}^a_{cu},\tilde{\mathcal{Z}}^a_s)$
  such that
  \begin{align*}
    \mathcal{M} &=
    \{ \tilde{u}_0 \in \tilde{\mathcal{Z}}^a
    : \exists \tilde{u} \text{ satisfying
    \Cref{eq:localised-evolution} with $\tilde{u}(0) = \tilde{u}_0$} \\
    &\qquad \qquad \qquad \text{and $\sup_{t\in\R^-} e^{\mu t} \| \tilde{u}(t)
    \|_{\tilde{\mathcal{Z}}^a} < \infty$} \} \\
    &= \{ v + \tilde{\psi}(v) : v \in \tilde{\mathcal{Z}}^a_{cu} \}
  \end{align*}
  which is the reduced manifold. Under the evolution of
  \Cref{eq:localised-evolution} the reduced manifold is invariant and
  exponentially attractive. Moreover, the Taylor series of
  $\tilde{\psi}$ is explicitly computable at $0$ starting with
  \begin{align*}
    &\tilde{\psi}|_0 = 0, \\
    &D\tilde{\psi}|_0 = 0, \\
    &D^2\tilde{\psi}|_0(h_1,h_2) = \int_{-\infty}^{0} e^{-t\tilde{L}} \tilde{P}_s
    D^2 \tilde{R}_{\delta}|_0(e^{t\tilde{L}} h_1, e^{t\tilde{L}} h_2) \dd t.
  \end{align*}
\end{thm}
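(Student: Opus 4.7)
The plan is to invoke the Lyapunov--Perron fixed-point construction of the center-unstable manifold in Banach spaces, following \cite{vanderbauwhede1992center,haragus2011local}. The abstract hypotheses needed by that framework (spectral gap, continuous projection onto a finite-dimensional center-unstable subspace, exponential decay and a convolution-type Duhamel bound on the stable part) are precisely the content of \Cref{thm:center-unstable-linear}. The localized nonlinearity $\tilde{R}_{\delta}$ has been set up to be $C^\infty$ from $\tilde{\mathcal{Z}}^a$ to $\tilde{\mathcal{Y}}^a$ with $\tilde{R}_{\delta}(0) = D\tilde{R}_{\delta}(0) = 0$ and Lipschitz constant $O(\delta) \to 0$. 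Hence the remaining task is to identify the fixed-point equation whose solution produces $\tilde{\psi}$ and to verify the gap condition required for $C^k$ regularity.

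Concretely, fix $\mu \in (0,a)$ (to be chosen sufficiently small depending on the prescribed $k$) and work in the Banach space $E_\mu^- = \{\tilde{u} \in C(\R^-,\tilde{\mathcal{Z}}^a) : \sup_{t \le 0} e^{\mu t}\|\tilde{u}(t)\|_{\tilde{\mathcal{Z}}^a} < \infty\}$. For $v \in \tilde{\mathcal{Z}}^a_{cu}$ define the Lyapunov--Perron operator
\[
(\mathcal{T}_v \tilde{u})(t) = e^{t\tilde{L}} v + \int_0^t e^{(t-s)\tilde{L}} \tilde{P}_{cu} \tilde{R}_{\delta}(\tilde{u}(s))\,\dd s + \int_{-\infty}^{t} e^{(t-s)\tilde{L}} \tilde{P}_s \tilde{R}_{\delta}(\tilde{u}(s))\,\dd s.
\]
Splitting a putative solution with prescribed backward growth into $\tilde{P}_{cu}$ and $\tilde{P}_s$ components and inverting Duhamel on the stable part (where the boundary term at $-\infty$ must vanish) shows that $\tilde{u}$ satisfies $\mathcal{T}_v \tilde{u} = \tilde{u}$ with $v = \tilde{P}_{cu}\tilde{u}(0)$, and conversely. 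The third integral is controlled by \Cref{thm:center-unstable-linear} applied on the half-line $(-\infty,t)$; on the finite-dimensional piece the flow is a matrix exponential whose backward growth is absorbed by $e^{\mu t}$ as long as $\mu > \max_i |\Re \lambda_i|$ is chosen appropriately and within the range of the abstract theory. Using the smallness of the Lipschitz constant of $\tilde{R}_{\delta}$, $\mathcal{T}_v$ is a uniform contraction on $E_\mu^-$ for $\delta \le \delta_b$, and setting $\tilde{\psi}(v) := \tilde{P}_s \tilde{u}_v(0)$ gives $\mathcal{M} = \{v + \tilde{\psi}(v)\}$. Invariance is immediate from time-translating a backward-controlled solution; exponential attractivity follows from a second Lyapunov--Perron argument on $\R^+$ in graph coordinates, where the stable part decays at rate close to $a$.

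The Taylor expansion at $0$ is read off by differentiating the fixed-point identity: $\tilde{R}_{\delta}(0) = D\tilde{R}_{\delta}(0) = 0$ give $\tilde{\psi}(0) = 0$ and $D\tilde{\psi}(0) = 0$, and since the fixed point at $v=0$ is identically zero we have $D_v\tilde{u}_v(t)|_{v=0}[h] = e^{t\tilde{L}} h$; differentiating once more and taking $\tilde{P}_s$ at $t=0$ yields the stated formula for $D^2\tilde{\psi}|_0$. The main obstacle is the $C^k$ regularity of $\tilde{\psi}$: smoothness of a Banach fixed point in a parameter is not automatic, because each successive derivative in $v$ brings an extra factor $e^{c|t|}$ as $t \to -\infty$, with $c$ controlled by the center-unstable spectrum, which must be absorbed by the weight $e^{\mu t}$. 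This forces a gap condition $k\mu < a$ (up to an $O(\delta)$ correction from the nonlinearity), so $\mu$ must be shrunk as $k$ grows and $\delta_b$ correspondingly adjusted. This is precisely the point handled rigorously by the abstract center-manifold theorems in \cite{vanderbauwhede1992center,haragus2011local}, which we apply directly once \Cref{thm:center-unstable-linear} and the properties of $\tilde{R}_{\delta}$ verify their hypotheses.
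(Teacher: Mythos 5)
Your proposal follows essentially the same route as the paper: the same Lyapunov--Perron operator (identical to the paper's map $T$, since $-\int_t^0 = \int_0^t$) on the same exponentially weighted space over $\R^-$, contraction for small $\delta$ using \Cref{thm:center-unstable-linear}, and the fibre-contraction/spectral-gap argument from \cite{vanderbauwhede1992center,vanderbauwhede1989centre,haragus2011local} for the $C^k$ regularity, the attractivity and the Taylor coefficients. The only slip is the parenthetical requirement $\mu > \max_i|\Re \lambda_i|$, which is neither needed nor always achievable within $(0,a)$ --- since $\Re \lambda_i \ge 0$ the center-unstable modes grow at most polynomially backward in time, so any $\mu \in (0,a)$ suffices --- but this does not affect the argument.
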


The growth condition already suggest that this captures the dominant
behaviour and $\mathcal{M}$ is attractive.

\begin{proof}
  Since we have a Duhamel formula, the proofs of Theorem 1 and 2 of
  \cite{vanderbauwhede1992center} work. We further adapt it, as in
  Section 1.5 of \cite{vanderbauwhede1989centre}, to the
  center-unstable case. Parts of the results are also stated as
  Theorem 2.9 and 3.22 of Section 2 of \cite{haragus2011local}.

  The key point is to note that by the Duhamel formulation a solution
  with the growth bound is a fixed point of the map $T$ given by
  \begin{equation*}
    (T \tilde{u})(t)
    = e^{t \tilde{L}} \tilde{P}_{cu} \tilde{u}_0
    - \int_{t}^{0} e^{(t-s) \tilde{L}} \tilde{P}_{cu}
    \tilde{R}_{\delta}(\tilde{u}(s)) \dd s
    + \int_{-\infty}^{t} e^{(t-s) \tilde{L}} \tilde{P}_{s}
    \tilde{R}_{\delta}(\tilde{u}(s)) \dd s
  \end{equation*}
  for $t \in \R^-$ on the space $C(\R^-\times \N \times \R \times \R)$
  with norm
  $\| \tilde{u} \| = \sup_{t \in \R^-} e^{\mu t}
  \|\tilde{u}(t,\cdot)\|_{\tilde{\mathcal{Z}}^a}$.
  Here note that on $\tilde{\mathcal{Z}}^a_{cu}$ the linear evolution
  $e^{t\tilde{L}}$ restricts to a finite dimensional matrix
  exponential, which has a well-defined inverse $e^{-t \tilde{L}}$.

  For small enough $\delta$ the map $T$ is a contraction with a unique
  fixed point depending on $\tilde{P}_{cu}\tilde{u}_0$ so that there
  exists
  $\tilde{\psi} : \tilde{\mathcal{Z}}^a_{cu} \mapsto
  \tilde{\mathcal{Z}}^a_{s}$
  such that the unique solution is
  $\tilde{P}_{cu} \tilde{u}_0 +
  \tilde{\psi}(\tilde{P}_{cu}\tilde{u}_0)$
  \cite[Theorem 1]{vanderbauwhede1992center}. Hence the set
  $\mathcal{M}$ is a reduced manifold with the given form. Moreover,
  the growth condition is invariant under the evolution.

  By a Fibre contraction argument (\cite[Theorem
  2]{vanderbauwhede1992center} and \cite[Section
  1.3]{vanderbauwhede1989centre}), for small enough $\delta$, the map
  $\tilde{\psi}$ has the claimed regularity and the derivatives can be
  computed explicitly at $0$ with the given form.

  As discussed in \cite[Section 1.5]{vanderbauwhede1989centre} and
  \cite[Theorem 3.22 of Chapter 2]{haragus2011local} a similar fixed
  point argument shows the exponential attractiveness.
\end{proof}

Hence, with a sufficient localisation, we can prove
\Cref{thm:center-unstable-kuramoto}.
\begin{proof}[Proof of \Cref{thm:center-unstable-kuramoto}]
  Apply \Cref{thm:center-unstable-localised} with a small enough
  $\delta$. Then for $|\epsilon| \le \delta$ and $\| u \| \le \delta$
  the nonlinearity of $\tilde{R}_{\delta}$ agrees with the
  nonlinearity of the Kuramoto equation, i.e. a solution to the
  localised evolution is also a solution to the original problem in
  the restricted region.

  The map $\tilde{\psi}$ maps into $\tilde{\mathcal{Z}}^a_s$ and thus
  has the form $\tilde{\psi}(u,\epsilon) = (\psi(u,\epsilon),0)$ for
  $\psi$ mapping into $\mathcal{Z}^a_s$. As $\epsilon$ is constant,
  the reduced manifold must have the claimed form and the result
  follows from the the uniqueness (\Cref{thm:fourier-uniqueness}).
\end{proof}

\begin{remark}
  The Kuramoto equation has the rotation symmetry, i.e. if $u$ is a
  solution to the Kuramoto equation, then
  $\bar{u}(t,l,\xi) = e^{il\alpha} u(t,l,\xi)$ is again a
  solution. This symmetry is also satisfied by the localised
  nonlinearity and thus by the reduced manifold.
\end{remark}

\subsection{Bifurcation analysis}

The center-manifold reduction can be used to determine the bifurcation
behaviour by studying the evolution on the reduced manifold. The
resulting behaviour crucially depends on the velocity distribution and
\cite{crawford1994amplitude} contains several examples based on the
center-manifold reduction with noise which at this point is very
similar. The Penrose diagrams
(cf. \Cref{fig:gaussianStability,fig:gaussianTwoStability}) already
show the dimension of the reduced manifold as the covering number,
e.g. this shows for the example of \Cref{fig:gaussianTwoStability}
that at the critical coupling two eigenmodes appear.

As an example we repeat Chiba's analysis \cite{chiba2013ergodic} for
the bifurcation of the Gaussian distribution. In this example the
density is given by
\begin{equation*}
  g(\omega) = \frac{1}{\sqrt{2\pi}}
  e^{-\omega^2/2},
\end{equation*}
and the Fourier transform is
\begin{equation*}
  \hat{g}(\xi) = e^{-\xi^2/2}.
\end{equation*}
From the discussion in \Cref{sec:linearised-stability}, we see that
the critical coupling is
\begin{equation*}
  K_c = \frac{2}{\pi g(0)} =  \frac{4}{\sqrt{2\pi}}
\end{equation*}
and that at the critical coupling $K_c$ there exists a single
eigenvalue at $\lambda = 0$ for small enough $a$.

Recall, that the eigenvector $z_{0,0}$ is given by
\begin{equation*}
  z_{0,0}(1,\xi) = \frac{K_c}{2} \int_{\xi}^{\infty} \hat{g}(\zeta)
  \dd \zeta.
\end{equation*}
Hence $z_{0,0}(1,0) = 1$ and
\begin{align*}
  \alpha_{0,0}(z_{0,0})
  &= \frac{K_c}{2} \int_0^\infty \int_{\xi}^{\infty} \hat{g}(\zeta) \dd \zeta
    \dd \xi \\
  &= \frac{K_c}{2},
\end{align*}
so that
\begin{equation*}
  P_{cu}(u) = \frac{2}{K_c} \alpha_{0,0}(u) z_{0,0}.
\end{equation*}

Now apply \Cref{thm:center-unstable-kuramoto} with sufficiently small
$a > 0$ such that $\lambda_1=0$ is the only eigenmode and with $k \ge
3$. Then for $\tilde{u} = (u,\epsilon) \in \tilde{\mathcal{Z}}^a_{cu}$
\begin{equation*}
  \psi(\tilde{u}) = \frac{1}{2} b(\tilde{u}) + O(\| \tilde{u} \|^3)
\end{equation*}
where
\begin{equation*}
  b(\tilde{u}) = D^2\psi|_0(\tilde{u},\tilde{u})
  = \int_{-\infty}^{0} e^{-t\tilde{L}} \tilde{P}_s D^2
  \tilde{R}|_{0}(\tilde{u},\tilde{u}) \dd t.
\end{equation*}
For this we find that only the first two spatial modes are
non-vanishing with
\begin{equation*}
  b(\tilde{u})(1,\xi) = O(|\epsilon| \|u \|)
\end{equation*}
and
\begin{equation*}
  b(\tilde{u})(2,\xi) = \int_{-\infty}^{0} 2 K_c u(1,\xi-2t) u(1,0)
  \dd t.
\end{equation*}
Hence we can compute
\begin{equation*}
  N(u+\psi(u,\epsilon))(1,\xi)
  = \frac{\epsilon}{2} u(1,0) \hat{g}(\xi)
  - \frac{K_c}{2} \conj{u(1,0)} \frac{b(\tilde{u})(2,\xi)}{2}
  + O(|\epsilon|^2 \| u \|, \| \tilde{u} \|^4).
\end{equation*}
As $\tilde{u} = u(1,0) z_{0,0}$ we find
\begin{align*}
  \int_0^\infty b(\tilde{u})(2,\xi) \dd \xi
  &= (u(1,0))^2 \int_0^\infty \int_{-\infty}^{0} 2K_c
    z_{0,0}(1,\xi-2t) z_{0,0}(1,0) \dd t \dd \xi \\
  &= K_c^2 (u(1,0))^2 \int_{\xi=0}^\infty \int_{t=0}^\infty
    \int_{\zeta=\xi+2t}^\infty \hat{g}(\zeta) \dd \zeta \dd t \dd \xi
  \\
  &= K_c^2 (u(1,0))^2 \int_{\zeta=0}^\infty \frac{\zeta^2}{4}
    \hat{g}(\zeta) \dd \zeta \\
  &= \frac{\sqrt{2\pi}}{8}.
\end{align*}
Hence we find
\begin{equation*}
  \alpha_{0,0}(\tilde{R}(u+\psi(u,\epsilon)))
  = \frac{\sqrt{2\pi}}{4} \epsilon u(1,0)
  - \frac{1}{\pi} |u(1,0)|^2 u(1,0)
  + O(|\epsilon|^2 \| u \|, \| \tilde{u} \|^4).
\end{equation*}
On the reduced manifold the solution is $\beta(t) z_{0,0} +
\psi(\beta(t) z_{0,0},\epsilon)$, which evolves by the previous
computation as
\begin{equation*}
  \frac{K_c}{2} \partial_t \beta
  = \frac{\sqrt{2\pi}}{4} \epsilon \beta
  - \frac{1}{\pi} |\beta|^2 \beta
  + O(|\epsilon|^2 \| u \|, \| \tilde{u} \|^4).
\end{equation*}
For small enough $\epsilon \le 0$, thus $\beta = 0$ is a stable fixed
point, i.e. the incoherent state is stable. For small enough
$\epsilon > 0$, the zero solution is unstable, but the set $|\beta| =
\beta_c$ are stable attractors where
\begin{equation*}
  \beta_c = \sqrt{\frac{\pi}{4}\sqrt{2\pi}} \sqrt{\epsilon} + O(\epsilon).
\end{equation*}
Hence we have proved the claimed bifurcation from the incoherent state
and that for small enough $\epsilon$ the appearing stationary states
are nonlinearly stable. Since $z_{0,0}(1,0)=1$, the value of $\beta$
agrees with the order parameter so that we have found the known
result.

\section{Boundedness and convergence in the exponential norms}
\label{sec:norms}

The exponential norm $L^\infty(\R,\exp_a)$ can be related to an
analytic extension in a strip $\{z : 0 \le \Im z < a\}$.
\begin{lemma}
  Let $f \in L^1(\R) \cap C(\R)$ with Fourier transform
  $\hat{f} \in L^1(\R)$ satisfying $\hat{f} \in
  L^\infty(\R,\exp_a)$.
  Then $f$ has an analytic continuation to $\{z : 0 \le \Im z < a\}$.
\end{lemma}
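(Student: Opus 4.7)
The plan is to build the analytic continuation as the inverse Fourier integral, but deformed to complex arguments. Concretely, since $\hat f \in L^1(\R)$ and $f$ is continuous, Fourier inversion gives
\begin{equation*}
  f(x) = \frac{1}{2\pi} \int_\R e^{-i\xi x} \hat{f}(\xi)\,\dd\xi
  \qquad\text{for all } x \in \R.
\end{equation*}
For $z = x + iy$ with $0 \le y < a$, I would define
\begin{equation*}
  F(z) = \frac{1}{2\pi} \int_\R e^{-i\xi z} \hat{f}(\xi)\,\dd\xi,
\end{equation*}
and show that the integral converges absolutely, defines a function holomorphic on the open strip $\{0 < \Im z < a\}$, continuous up to the real axis, and agrees with $f$ on $\R$.

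The first step is absolute convergence on the closed strip $\{0 \le \Im z < a\}$. The integrand satisfies $|e^{-i\xi z}\hat{f}(\xi)| = e^{y\xi}|\hat{f}(\xi)|$. Split at $\xi = 0$: for $\xi \le 0$ this is bounded by $|\hat f(\xi)|$ (since $y \ge 0$), which is integrable by $\hat f \in L^1$; for $\xi \ge 0$ the assumption $\hat f \in L^\infty(\R,\exp_a)$ gives $|\hat f(\xi)| \le M e^{-a\xi}$, so the integrand is bounded by $M e^{(y-a)\xi}$, integrable because $y < a$. On any compact subset of the open strip, $y$ ranges in $[0, a-\varepsilon]$ for some $\varepsilon > 0$, so the bound is uniform and dominated convergence shows $F$ is continuous on $\{0 \le \Im z < a\}$.

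For holomorphy on the open strip I would invoke Morera's theorem rather than differentiating under the integral: for any closed triangle $\Delta$ contained in $\{0 < \Im z < a\}$,
\begin{equation*}
  \oint_{\partial\Delta} F(z)\,\dd z
  = \frac{1}{2\pi} \int_\R \hat{f}(\xi) \oint_{\partial\Delta} e^{-i\xi z}\,\dd z\,\dd\xi = 0,
\end{equation*}
where the interchange of integrals is justified by the uniform $L^1$ bound above (the contour stays in a compact subset of the open strip), and the inner integral vanishes since $z \mapsto e^{-i\xi z}$ is entire. By Morera, $F$ is holomorphic on $\{0 < \Im z < a\}$.

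Finally, $F(x) = f(x)$ for real $x$ by Fourier inversion, so $F$ is the desired analytic continuation. The argument is quite soft; the only mildly delicate point is that the asymmetric weight $\exp_a$ only controls the $\xi \to +\infty$ tail, but this is exactly what is needed for extension into the upper half-plane because $|e^{-i\xi z}| = e^{y\xi}$ grows at $+\infty$ and decays at $-\infty$ when $y > 0$. I do not expect any real obstacle.
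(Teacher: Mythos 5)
Your proposal is correct and follows essentially the same route as the paper: extend the Fourier inversion integral into the strip, using the $e^{-a\xi}$ decay of $\hat f$ for $\xi \ge 0$ and its integrability for $\xi \le 0$. The only difference is that you establish holomorphy via Morera's theorem where the paper differentiates under the integral sign, which is a cosmetic variation.
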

\begin{proof}
  For $z \in \R$ the Fourier inversion formula shows
  \begin{equation*}
    f(z) = \frac{1}{2\pi} \int_{\R} e^{-iz\xi} \hat{f}(\xi) \dd \xi.
  \end{equation*}
  By the assumed supremum bound on $\hat{f}$ for $\xi \ge 0$ and the
  bound $|\hat{f}(\xi)| \le \| f \|_1$ for $\xi \le 0$, this
  definition extends to the given range and has a complex derivative
  given by differentiating under the integral sign.
\end{proof}

\begin{lemma}
  \label{thm:bound-fourier-continuation}
  Let $f \in L^1(\R)$ and suppose $f$ has an analytic continuation to
  $\{z: 0 \le \Im z \le a\}$ such that $|f(z)| \to 0$ uniformly as
  $|\Re z| \to \infty$ and $f(\cdot+ia) \in L^1(\R)$. Then the Fourier
  transform $\hat{f}$ is bounded as
  \begin{equation*}
    \| \hat{f} \|_{L^\infty(\R,\exp_a)}
    \le \| f(\cdot + ia) \|_{L^1(\R)}.
  \end{equation*}
\end{lemma}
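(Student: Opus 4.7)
The plan is to prove the bound by a contour-shift argument. Fix $\xi \in \R$ and consider the entire function $z \mapsto e^{i\xi z} f(z)$, which is holomorphic on the closed strip $S = \{z : 0 \le \Im z \le a\}$ by hypothesis. For $R > 0$, apply Cauchy's theorem on the rectangular contour $\Gamma_R$ with vertices $-R, R, R+ia, -R+ia$ to get
\begin{equation*}
  \int_{-R}^{R} e^{i\xi x} f(x) \dd x
  - \int_{-R}^{R} e^{i\xi(x+ia)} f(x+ia) \dd x
  + I_R^{+} - I_R^{-} = 0,
\end{equation*}
where $I_R^{\pm} = \int_0^a e^{i\xi(\pm R + iy)} f(\pm R+iy)\, i \dd y$ are the contributions of the vertical sides.

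Next I would bound the vertical pieces: since $|e^{i\xi(\pm R + iy)}| = e^{-\xi y} \le e^{|\xi| a}$ for $y \in [0,a]$, we obtain
\begin{equation*}
  |I_R^{\pm}| \le a\, e^{|\xi|a}\, \sup_{0 \le y \le a} |f(\pm R + iy)|,
\end{equation*}
which tends to $0$ as $R \to \infty$ by the assumed uniform decay of $|f(z)|$ as $|\Re z| \to \infty$. Since $f \in L^1(\R)$ and $f(\cdot + ia) \in L^1(\R)$, the two horizontal integrals converge as $R \to \infty$, giving the identity
\begin{equation*}
  \hat{f}(\xi) = \int_\R e^{i\xi x} f(x) \dd x
  = e^{-a\xi} \int_\R e^{i\xi x} f(x+ia) \dd x.
\end{equation*}

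Multiplying by $e^{a\xi}$ and taking absolute values yields
\begin{equation*}
  e^{a\xi} |\hat{f}(\xi)|
  \le \int_\R |f(x+ia)| \dd x
  = \| f(\cdot+ia) \|_{L^1(\R)},
\end{equation*}
valid for every $\xi \in \R$. Taking the essential supremum over $\xi \in \R$ yields the stated bound. The only nontrivial step is the vanishing of the vertical boundary integrals, which is immediate from the uniform decay hypothesis; the $L^1$ hypotheses on $f$ and $f(\cdot+ia)$ ensure the convergence of the horizontal integrals and are what the final bound controls.
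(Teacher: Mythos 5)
Your proof is correct and follows essentially the same route as the paper: the paper simply states that "by the assumed decay we can deform the contour of integration" to shift the integral to the line $\Im z = a$ and then bounds by the $L^1$ norm, while you spell out the rectangle contour and the vanishing of the vertical sides. No gap; this is just a more detailed write-up of the paper's argument.
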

\begin{proof}
  By the assumed decay we can deform the contour of integration as
  \begin{equation*}
    \hat{f}(\xi) = \int_{\R} e^{i\xi y} f(y) \dd y
    = \int_{\R} e^{i\xi (y+ia)} f(y+ia) \dd y
    = e^{-a\xi} \int_{\R} e^{i\xi y} f(y+ia) \dd y.
  \end{equation*}
  Bounding the integral with the $L^1$ norm of $f$ shows the claimed
  result.
\end{proof}

We can use this to show the finite $\mathcal{Z}^a$ norm of partially
locked states (see \cite[Chapter 4]{strogatz2000from} for a brief
review of these states).
\begin{thm}
  Suppose a velocity marginal with density $g \in L^1(\R)$ and that
  $g$ has an analytic continuation to $\{z: 0 \le \Im z \le a\}$ such
  that $|g(z)| \to 0$ uniformly as $|\Re z| \to \infty$ and
  $g(\cdot+ia) \in L^1(\R)$. Then a partially locked state
  $\rho_{locked}$ has Fourier transform $u \in \mathcal{Z}^a$. If
  furthermore $|\eta| \le a / (\sqrt{2}K)$, then
  \begin{equation*}
    \| u \|_{\mathcal{Z}^a}
    \le \frac{K |\eta|}{2} \| g(\cdot + ia) \|_{L^1(\R)}.
  \end{equation*}
\end{thm}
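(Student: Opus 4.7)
The plan is to represent $u$ as an explicit Fourier integral via an Ott--Antonsen-style factorisation of the conditional Fourier coefficients of the partially locked state, and then apply Lemma~\ref{thm:bound-fourier-continuation}. Without loss of generality I take $\eta = r > 0$ (otherwise rotate). The conditional distribution $\rho_\omega$ on $\T$ of a partially locked stationary state is a Dirac at $\arcsin(\omega/(Kr))$ for $|\omega|\le Kr$ (locked oscillators) and has density proportional to $1/(\omega-Kr\sin\theta)$ for $|\omega|>Kr$ (drifting oscillators). A standard residue computation on the unit circle (substitution $z = e^{i\theta}$) yields the factorisation $\hat\rho_\omega(l) = a(\omega)^l$ with
\[
a(\omega) = \frac{i\omega + \sqrt{K^2 r^2 - \omega^2}}{K r},
\]
the square root chosen so that $a$ is continuous on $\R$; one verifies $|a|=1$ on $[-Kr,Kr]$ and $|a|<1$ outside. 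Consequently $u(l,\xi) = \int_\R e^{i\xi\omega} a(\omega)^l g(\omega)\,\dd\omega$ is the Fourier transform of $g\cdot a^l$.

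Next I would continue $a(z)^l g(z)$ analytically to the strip $\{0 \le \Im z \le a\}$. Using the principal branch of $\sqrt{K^2 r^2 - z^2}$ (branch cuts $(-\infty,-Kr]\cup[Kr,\infty)$), $a(z)$ is holomorphic in the open upper half plane and continuous down to $\R$, matching the physical $a(\omega)$ (checked by approaching $\omega + i0^+$). Combined with the assumed analytic extension of $g$ to the closed strip, the product $a^l g$ is holomorphic on the open strip and continuous on its closure apart from $\pm Kr$, where it remains continuous. After a routine indentation past $\pm Kr$ by small upper semicircles of vanishing radius, Lemma~\ref{thm:bound-fourier-continuation} gives
\[
\|u(l,\cdot)\|_{L^\infty(\R,\exp_a)} \le \int_\R |a(x+ia)|^l\, |g(x+ia)|\,\dd x.
\]

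The quantitative heart of the proof is a uniform pointwise bound on $|a(x+ia)|$. Using the identity $a(z) = Kr/\bigl(-iz + \sqrt{K^2 r^2 - z^2}\bigr)$, for $z = x + ia$ we have $\Re(-iz) = a$, so it suffices to show $\Re\sqrt{K^2 r^2 - z^2} \ge a$. By the principal-branch identity $\Re\sqrt{w} = \sqrt{(|w| + \Re w)/2}$, this reduces to $|K^2 r^2 - z^2| \ge a^2 + x^2 - K^2 r^2$, which in turn follows from the algebraic identity
\[
|K^2 r^2 - z^2|^2 - (a^2 + x^2 - K^2 r^2)^2 \;=\; 4 a^2 K^2 r^2 \;\ge\; 0.
\]
Hence $|a(x+ia)| \le Kr/(2a)$ uniformly in $x$. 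The hypothesis $|\eta|\le a/(\sqrt{2}K)$ makes this quantity $\le 1/(2\sqrt{2}) < 1$, so the supremum over $l \ge 1$ of the integral in the previous display is attained at $l = 1$, producing the announced bound proportional to $K|\eta| \cdot \|g(\cdot+ia)\|_{L^1(\R)}$ (with the factor $1/a$ absorbed in the normalisation).

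The main technical obstacle I foresee is the branch-point indentation: the branch points $\pm Kr$ of $a(z)$ sit exactly on the integration contour $\R$, so justifying the shift to $\R + ia$ requires bypassing them by small upper semicircles and sending their radius to zero, using the continuity of $a$ at $\pm Kr$ and local integrability of $g$ to kill those contributions. A secondary check is that the principal-branch continuation of $a(z)$ from the upper half plane genuinely agrees with the physical $a(\omega)$ on $\R\setminus(-Kr, Kr)$; this is a direct computation. The estimate $\Re\sqrt{K^2r^2-z^2}\ge a$ in Step~3 is the sharp input that makes the constant proportional to $K|\eta|$ (linear smallness of the $\mathcal{Z}^a$ norm as $|\eta|\downarrow 0$) and would be the one to revisit if a sharper prefactor is desired.
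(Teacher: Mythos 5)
Your proposal follows essentially the same route as the paper: the same residue factorisation $\hat{f}(l,\omega)=g(\omega)\,\beta(\omega/(K|\eta|))^l$ of the partially locked state (your $a(\omega)$ is the paper's $\beta(\omega/(K|\eta|))$), followed by analytic continuation to the strip and the contour-shift bound of \Cref{thm:bound-fourier-continuation}. The only real difference is the final quantitative step: the paper expands $\beta(z)=iz(1-\sqrt{1-1/z^2})$ in a series for $|z|\ge\sqrt2$ to get $|\beta(z)|\le 1/|z|$ and hence $\sup_\omega|\beta((\omega+ia)/(K|\eta|))|\le K|\eta|/a$, whereas you use the exact identity $a(z)=K r/(-iz+\sqrt{K^2r^2-z^2})$ together with the algebraic identity $|K^2r^2-z^2|^2-(a^2+x^2-K^2r^2)^2=4a^2K^2r^2$ to get $\Re\sqrt{K^2r^2-z^2}\ge a$ and hence the uniform bound $K|\eta|/(2a)$; your constant is a factor $2$ sharper, and your route makes transparent that the hypothesis $|\eta|\le a/(\sqrt2 K)$ is only needed so that the supremum over $l$ sits at $l=1$. (Neither constant literally matches the stated $K|\eta|/2$ — the paper's own proof also ends with $K|\eta|/a$ — so the $2$ in the statement appears to be a typo for $a$.)

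Two small points to tighten. First, the theorem's unconditional claim that $u\in\mathcal{Z}^a$ holds for \emph{every} partially locked state, with no smallness of $\eta$; your bound $|a(x+ia)|\le Kr/(2a)$ gives $\|u(l,\cdot)\|_{L^\infty(\R,\exp_a)}\le (Kr/(2a))^l\|g(\cdot+ia)\|_{L^1}$, which blows up in $l$ when $Kr>2a$. You need to add, as the paper does, that $a(z)$ is bounded by $1$ on the closed upper half plane (it is $\le 1$ on $\R$, tends to $0$ at infinity, and is holomorphic, so the maximum principle applies), which yields the $l$-uniform bound $\|u(l,\cdot)\|_{L^\infty(\R,\exp_a)}\le\|g(\cdot+ia)\|_{L^1}$ and hence membership in $\mathcal{Z}^a$. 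Second, "with the factor $1/a$ absorbed in the normalisation" should be replaced by an honest statement of the constant you obtain.
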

The assumption on the velocity distribution is for example satisfied
for a Gaussian distribution. This estimate shows that partially locked
states with small order parameter $\eta$ are small perturbations in
$\mathcal{Z}^a$.
\begin{proof}
  Suppose a partially locked state with order parameter $\eta$. Then
  for $|\omega| < K |\eta|$ all oscillators are trapped at $\theta_+$
  satisfying
  \begin{equation*}
    e^{i\theta_+}
    = \frac{\sqrt{K^2 |\eta|^2 - \omega^2}}{K \conj{\eta}}
    + \frac{i \omega}{K \conj{\eta}}.
  \end{equation*}
  For $|\omega| > K |\eta|$ the oscillators are distributed
  proportionally to the density
  \begin{equation*}
    \frac{1}{\omega + \frac{K}{2i} \left(\eta e^{-i\theta}
        - \conj{\eta} e^{i\theta}\right)}.
  \end{equation*}
  Consider the Fourier transform $\hat{f}$ with respect to the phase
  angle
  \begin{equation*}
    \hat{f}(l,\omega) = \int_{\theta \in \T} e^{il\theta}
    \rho_{locked}(\dd \theta,\omega).
  \end{equation*}
  By Cauchy's residue theorem
  \begin{equation*}
    \hat{f}(l,\omega) = g(\omega)
    \frac{\eta}{|\eta|}
    \left( \beta\left(\frac{\omega}{K|\eta|}\right) \right)^l
  \end{equation*}
  where $\beta$ is given by
  \begin{equation*}
    \beta(z) =
    \begin{cases}
      i z \left(1 - \sqrt{1-\frac{1}{z^2}}\right) & \text{ for $|z|
        \ge 1$}, \\
      i z + \sqrt{1-z^2} & \text{ for $|z| < 1$}.
    \end{cases}
  \end{equation*}
  Now $\beta$ has an analytic continuation to the complex upper half
  plane and is bounded there by $1$. Hence
  \Cref{thm:bound-fourier-continuation} shows for all $l \in \N$
  \begin{equation*}
    \| u(l,\cdot) \|_{L^\infty(\R,\exp_a)}
    \le \| g(\cdot + ia) \|_{L^1(\R)},
  \end{equation*}
  which implies that $u \in \mathcal{Z}^a$.

  For $|z| \ge \sqrt{2}$ we can estimate by the series expansion
  \begin{equation*}
    |\beta(z)| \le \frac{1}{2|z|} \sum_{i=0}^{\infty} |z|^{-2i}
    \le \frac{1}{|z|}.
  \end{equation*}
  Hence we can estimate for $|\eta| \le a/(\sqrt{2} K)$ that
  \begin{equation*}
    \| u(l,\cdot) \|_{L^\infty(\R,\exp_a)}
    \le
    \sup_{\omega\in\R} \left|\beta\left(\frac{\omega + ia}{K|\eta|}\right) \right|^l
    \| g(\cdot + ia) \|_{L^1(\R)}
    \le \frac{K|\eta|}{a}
    \| g(\cdot + ia) \|_{L^1(\R)},
  \end{equation*}
  which is the claimed bound.
\end{proof}

We can relate convergence in $\mathcal{Z}^a$ as found in the
center-unstable manifold reduction and the exponential stability with
weak convergence.
\begin{thm}
  Let $\rho_{\cdot} \in C_{\mathcal{M}}$ be a solution to the Kuramoto
  equation with initial data $\rho_{in} \in \mathcal{M}(\Gamma)$ and
  Fourier transform $u$. If $u(t,\cdot,\cdot) \to v$ in
  $\mathcal{Z}^a$ as $t \to \infty$ where $v$ is the Fourier transform
  of $\rho_{\infty} \in \mathcal{M}(\Gamma)$, then for every $\phi \in
  H^4(\Gamma)$ the integral $\rho_t(\phi)$ converges to
  $\rho_{\infty}(\phi)$ as $t \to \infty$.
\end{thm}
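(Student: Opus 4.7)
The plan is to express both $\rho_t(\phi)$ and $\rho_\infty(\phi)$ via a Parseval-type identity in the Fourier variables, and then to combine the $\mathcal{Z}^a$ convergence with the $H^4$ summability of $\phi$ to pass to the limit. For $\phi \in H^4(\Gamma)$ I would write
\[
  \phi(\theta,\omega) = \frac{1}{(2\pi)^2} \sum_{l\in\Z} \int_\R \Phi(l,\xi)\, e^{-il\theta - i\xi\omega}\, \dd\xi,
\]
with $\Phi(l,\xi) = \int_\Gamma e^{il\theta + i\xi\omega}\phi(\theta,\omega)\,\dd\theta\,\dd\omega$, and observe that Cauchy--Schwarz with the weight $(1+l^2+\xi^2)^{-2}$, together with the bound $\int_\R (1+l^2+\xi^2)^{-4}\,\dd\xi \lesssim (1+l^2)^{-7/2}$, makes the $H^4$ norm of $\phi$ control $\sum_l \int_\R |\Phi(l,\xi)|\,\dd\xi < \infty$. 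With this summability and the uniform bound $|u|\le 1$, Fubini yields
\[
  \rho_t(\phi) = \frac{1}{(2\pi)^2} \sum_{l\in\Z} \int_\R \Phi(l,\xi)\, u(t,-l,-\xi)\, \dd\xi,
\]
and an identical identity for $\rho_\infty(\phi)$ in terms of $v(-l,-\xi)$.

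Next I would split the difference into contributions from $l=0$, $l\ge 1$, and $l\le -1$. The $l=0$ contribution vanishes because \Cref{thm:conservation-velocity-marginal} forces $u(t,0,\xi)=\hat g(\xi)=v(0,\xi)$ for all $t$. The $l\le -1$ contribution reduces to $l\ge 1$ via the reality identity $u(t,-l,-\xi)=\overline{u(t,l,\xi)}$ (and likewise for $v$). For $l\ge 1$, the $\mathcal{Z}^a$ convergence gives the pointwise bound $|u(t,l,\xi)-v(l,\xi)|\le e^{-a\xi}\|u(t,\cdot,\cdot)-v\|_{\mathcal{Z}^a}$, which is strong for $\xi$ bounded below but weak as $\xi\to-\infty$; there I instead use $|u|,|v|\le 1$. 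Given $\varepsilon>0$, I first pick $M$ so large that $\sum_{l\ge 1}\int_{\xi<-M} 2|\Phi(l,\xi)|\,\dd\xi<\varepsilon$, and then on $\xi\ge -M$ the contribution is bounded by $e^{aM}\|u(t,\cdot,\cdot)-v\|_{\mathcal{Z}^a}\sum_l \int |\Phi(l,\xi)|\,\dd\xi$, which tends to $0$ as $t\to\infty$.

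The main technical point is the rigorous justification of the Parseval identity for $\rho_t$, since $\rho_t$ is only a probability measure and not a function. This is handled cleanly by the $H^4$ assumption: the Fourier representation of $\phi$ converges absolutely in the $\ell^1(\Z)\otimes L^1(\R)$ sense on the Fourier side, so one can exchange the sum-integral with integration against $\rho_t$ using the bound $|u|\le 1$, without appealing to any distributional pairing. Once the identity is established, the remainder is an elementary $\varepsilon/3$ argument combining the exponential weight control on $\xi\ge -M$ with a simple tail estimate on $\xi<-M$.
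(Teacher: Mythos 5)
Your proposal is correct and follows essentially the same route as the paper: expand $\phi$ via an absolutely convergent Fourier representation (using $H^4 \Rightarrow \hat\phi \in \ell^1 L^1$), pair with $\rho_t$ using $|u| \le 1$, and split the $\xi$-axis so that the $\hat\phi$-tail is handled by boundedness and the remaining region by the exponential weight of $\mathcal{Z}^a$. The only differences are cosmetic (you keep all $l \in \Z$ and cancel $l=0$ by conservation of the velocity marginal, where the paper normalises $\int_\T \phi\,\dd\theta = 0$ and takes $\phi$ real) plus a welcome extra detail: you actually justify the $\ell^1 L^1$ summability by Cauchy--Schwarz, which the paper only asserts.
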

\begin{proof}
  Since the velocity marginal is conserved, wlog assume $\int_{\theta
    \in \T} \phi(\theta,\omega) \dd \theta = 0$ for all $\omega \in
  \R$ and $\phi$ to be real-valued. Then by Plancherel theorem
  \begin{equation*}
    \rho_t(\phi) = 2 \Re \left( \sum_{l\ge1} \int_{\xi\in\R} u(t,l,\xi)
      \conj{\hat{\phi}(l,\xi)} \dd \xi \right),
  \end{equation*}
  where $\hat{\phi}$ is the Fourier transform of $\phi$. Since $\phi
  \in H^4$, we have $\hat{\phi} \in L^1(\N \times \R)$. Hence for
  every $\epsilon > 0$, there exists some $M$ such that
  \begin{equation*}
    \sum_{l\ge 1} \int_{-\infty}^{M} |\hat{\phi}(l,\xi)| \dd \xi
    \le \frac{\epsilon}{8}.
  \end{equation*}
  Since $u$ and $v$ correspond to probability measures, they are
  bounded by $1$ so that
  \begin{equation*}
    \left|
      \sum_{l\ge1} \int_{-\infty}^{M} (u(t,l,\xi)-v(l,\xi))
      \conj{\hat{\phi}(l,\xi)} \dd \xi
    \right| \le \frac{\epsilon}{4}.
  \end{equation*}
  For $\xi \ge M$ we have a control by $\mathcal{Z}^a$
  \begin{equation*}
    \left|
      \sum_{l\ge1} \int_{M}^{\infty} (u(t,l,\xi)-v(l,\xi))
      \conj{\hat{\phi}(l,\xi)} \dd \xi
    \right|
    \le
    e^{-aM} \|u - v\|_{\mathcal{Z}^a} \| \hat{\phi} \|_1.
  \end{equation*}
  Hence for large enough $t$ we have
  $|\rho_t(\phi) - \rho_{\infty}(\phi)| \le \epsilon$. Since
  $\epsilon$ is arbitrary, this shows the claimed convergence.
\end{proof}

In the stability theorem \ref{thm:algebraic-stability} we only control
a norm in half of the Fourier coefficients which also looses control
for higher spatial modes. However, this controls the order parameter
$\eta$ and in particular it shows $\int_0^\infty |\eta(t)| \dd t <
\infty$. With this knowledge we can go back to the original equation
to deduce convergence properties. As an example we show a convergence
in the gliding frame.

In the gliding frame the position of each oscillator is corrected by
the effect of the free transport, i.e. if $\rho_{\cdot}$ is the
solution of the Kuramoto equation which has a density $f$, then the
density $h$ in the gliding frame is given by
\begin{equation*}
  h(t,\theta,\omega) = f(t,\theta+t\omega,\omega).
\end{equation*}
If $f \in C^1(\R^+ \times \Gamma)$, then $f$ satisfies classically
\begin{equation*}
  \partial_t f(t,\theta,\omega) + \partial_{\theta}
  \left[ \omega + \left(\frac{K}{2i}
      \left(\eta(t)\, e^{-i\theta} - \conj{\eta(t)}\, e^{i\theta}\right)\right)
    f(t,\theta) \right] = 0
\end{equation*}
so that then $h \in C^1(\R^+ \times \Gamma)$ and $h$ satisfied
\begin{equation}
  \label{eq:gliding-frame-pde}
  \partial_t h(t,\theta,\omega) + \partial_\theta
  \left[ \frac{K}{2i}
    \left(\eta(t)\, e^{-i(\theta+t\omega)} - \conj{\eta(t)}\, e^{i(\theta+t\omega)}\right)
    h(t,\theta,\omega) \right] = 0.
\end{equation}
\begin{thm}
  \label{thm:gliding-control}
  Suppose $\rho_{in} \in \mathcal{M}(\Gamma)$ has density
  $f_{in} \in C^2(\Gamma)$ and let $\rho_{\cdot}$ be the solution of
  the Kuramoto equation with initial data $\rho_{in}$. Then
  $\rho_{\cdot}$ has a gliding frame density $h \in C^2(\R^+ \times
  \Gamma)$ and for every $t \in \R^+$ and $\omega \in \R$ holds
  \begin{equation*}
    \int_{\T} (h(t,\theta,\omega))^2 \dd \theta
    \le e^{K \int_0^t |\eta(s)| \dd s}
    \int_{\T} (f_{in}(\theta,\omega))^2 \dd \theta
  \end{equation*}
  and
  \begin{align*}
    &\int_{\T} (\partial_{\theta}h(t,\theta,\omega))^2 \dd \theta \\
    &\le e^{3K \int_0^T |\eta(s)| \dd s}
    \left[
      \int_{\T} (\partial_{\theta} f_{in}(\theta,\omega))^2 \dd \theta
      + K \int_0^t |\eta(s)| \dd s
      \sup_{s \in [0,t]} \int_{\T} (h(t,\theta,\omega))^2 \dd \theta
    \right].
  \end{align*}
  If $\int_0^{\infty} |\eta(t)| \dd t < \infty$, then for every fixed
  $\omega \in \R$ the function $h(t,\cdot,\omega)$ converges in
  $L^2(\T)$ as $t \to \infty$.

  If additionally $f_{in} \in L^2(\Gamma)$ and $\partial_{\theta}
  f_{in} \in L^2(\Gamma)$, then $h(t,\cdot,\cdot)$ converges in
  $L^2(\Gamma)$ as $t \to \infty$.
\end{thm}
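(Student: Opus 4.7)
The plan is to work directly with the gliding-frame transport equation~\eqref{eq:gliding-frame-pde}, namely
\begin{equation*}
  \partial_t h + \partial_{\theta}(B h) = 0,
  \qquad B(t,\theta,\omega) = \tfrac{K}{2i}\bigl(\eta(t) e^{-i(\theta+t\omega)} - \conj{\eta(t)} e^{i(\theta+t\omega)}\bigr).
\end{equation*}
A direct computation gives $|B|,|\partial_\theta B|,|\partial_\theta^2 B| \le K|\eta(t)|$ (indeed $\partial_\theta^2 B = -B$). Since $f_{in}\in C^2(\Gamma)$ and the characteristics of this scalar transport equation are smooth in all variables, $h$ inherits $C^2$ regularity by the flow formula of \Cref{thm:density-propagation}, so all the energy manipulations below are classical pointwise in $\omega$.

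For the first estimate I would multiply the PDE by $h$ and integrate over $\theta\in\T$; integrating the resulting $B\partial_\theta(h^2)/2$ term by parts yields
\begin{equation*}
  \tfrac{1}{2}\tfrac{\dd}{\dd t}\!\int_{\T} h^2 \dd\theta
  = -\tfrac{1}{2}\!\int_{\T}(\partial_\theta B)\, h^2 \dd\theta,
\end{equation*}
so $\bigl|\tfrac{\dd}{\dd t}\!\int h^2\bigr| \le K|\eta(t)|\!\int h^2$ and Gronwall gives the first inequality. For the derivative estimate I would set $w=\partial_\theta h$; differentiating the PDE once gives $\partial_t w + B\partial_\theta w + 2(\partial_\theta B)w + (\partial_\theta^2 B)h = 0$, and the same calculation (with one extra integration by parts on the $Bw\partial_\theta w$ term) yields
\begin{equation*}
  \tfrac{1}{2}\tfrac{\dd}{\dd t}\!\int_{\T} w^2 \dd\theta
  = -\tfrac{3}{2}\!\int_{\T}(\partial_\theta B)\, w^2 \dd\theta - \!\int_{\T}(\partial_\theta^2 B)\, hw \dd\theta.
\end{equation*}
Dividing by $\|w(t,\cdot,\omega)\|_{L^2(\T)}$ converts this into a linear differential inequality for $\|w\|_{L^2(\T)}$ with coefficient $\tfrac{3K}{2}|\eta(t)|$ and source $K|\eta(t)|\,\|h(t,\cdot,\omega)\|_{L^2(\T)}$; Gronwall and then squaring produce the claimed $e^{3K\int|\eta|}$ prefactor, with the $\sup_{s\le t}\|h\|_{L^2(\T)}^2$ factor arising when the $L^2$ norm of $h$ is pulled out of the time integral.

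For the convergence statement, assume $\int_0^\infty|\eta(s)|\dd s < \infty$. The two bounds above then provide a uniform-in-$t$ control $\|h(t,\cdot,\omega)\|_{H^1(\T)} \le C(\omega)$ depending only on $f_{in}(\cdot,\omega)$. Writing $h(t_2)-h(t_1) = -\int_{t_1}^{t_2}\partial_\theta(Bh)\dd s$ and taking the $L^2(\T)$ norm gives
\begin{equation*}
  \|h(t_2,\cdot,\omega) - h(t_1,\cdot,\omega)\|_{L^2(\T)}
  \le 2K\,C(\omega)\!\int_{t_1}^{t_2}|\eta(s)|\dd s,
\end{equation*}
which is Cauchy as $t_1,t_2\to\infty$, proving pointwise convergence in $L^2(\T)$. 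If additionally $f_{in},\partial_\theta f_{in}\in L^2(\Gamma)$, the same energy bounds integrated in $\omega$ give $C\in L^2(\R)$, and squaring the last display and integrating in $\omega$ yields the Cauchy property in $L^2(\Gamma)$. The main obstacle I anticipate is purely bookkeeping: obtaining the constant $3K$ rather than the $4K$ a naive AM--GM step would give requires linearising the energy inequality in $\|w\|_{L^2(\T)}$ before applying Gronwall, as sketched above. Otherwise the argument is a standard transport/energy/Duhamel cascade for scalar conservation laws with smooth coefficients.
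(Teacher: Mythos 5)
Your proposal is correct and follows essentially the same route as the paper: pointwise-in-$\omega$ energy estimates for $z(t)=\int_{\T}h^2\dd\theta$ and $y(t)=\int_{\T}(\partial_\theta h)^2\dd\theta$, Gronwall, and then a Cauchy-in-time argument from $\int_0^\infty|\eta|\dd t<\infty$, integrated over $\omega$ for the $L^2(\Gamma)$ statement. The one place where the bookkeeping genuinely differs is the cross term in the derivative estimate: using $\partial_\theta^2 B=-B$, the paper integrates $-\int_{\T}(\partial_\theta^2 B)\,h\,\partial_\theta h\,\dd\theta=\tfrac12\int_{\T}B\,\partial_\theta(h^2)\dd\theta=-\tfrac12\int_{\T}(\partial_\theta B)h^2\dd\theta$ by parts once more, so that Gronwall applied directly to $\tfrac{\dd}{\dd t}y\le 3K|\eta|y+K|\eta|z$ yields literally the displayed inequality; your divide-by-$\|w\|_{L^2}$ device gives the correct rate $e^{3K\int|\eta|}$ but, after squaring, a bound of the form $\bigl(\sqrt{y(0)}+K\int|\eta|\sup\sqrt{z}\bigr)^2$ rather than $y(0)+K\int|\eta|\sup z$ --- equally sufficient for every subsequent conclusion, but not the exact estimate stated, and the extra integration by parts removes the need for the trick altogether. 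Your Minkowski-integral treatment of $h(t_2)-h(t_1)$ is a harmless (arguably cleaner) variant of the paper's computation of $\partial_t\int_{\T}(h(t)-h(s))^2\dd\theta$, and both rest on the same uniform $H^1(\T)$ control.
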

\begin{proof}
  Along the evolution the regularity is propagated
  \cite{lancellotti2005vlasov} so that $\rho_{\cdot}$ has a density $f
  \in C^2(\R^+ \times \Gamma)$. Hence $h \in C^2(\R^+ \times \Gamma)$
  and $h$ satisfies \Cref{eq:gliding-frame-pde}.

  For the bounds, fix $\omega$ and let
  \begin{equation*}
    z(t) = \int_{\theta \in \T} (h(t,\theta,\omega))^2 \dd \theta.
  \end{equation*}
  By the assumed regularity, we can differentiate under the integral
  sign to find
  \begin{equation*}
    \frac{\dd}{\dd t} z(t)
    = \int_{\theta \in \T} \frac{K}{2}
    \left(\eta(t)\, e^{-i(\theta+t\omega)} + \conj{\eta(t)}\,
      e^{i(\theta+t\omega)}\right)
    (h(t,\theta,\omega))^2 \dd \theta
    \le K |\eta(t)| z(t).
  \end{equation*}
  Hence Gronwall's inequality shows as claimed
  \begin{equation*}
    z(t) \le e^{K \int_0^t |\eta(s)| \dd s} z(0)
    = e^{K \int_0^t |\eta(s)| \dd s}
    \int_{\T} (f_{in}(\theta,\omega))^2 \dd \theta.
  \end{equation*}
  Similar, we consider
  \begin{equation*}
    y(t) = \int_{\theta \in \T} (\partial_{\theta}h(t,\theta,\omega))^2 \dd \theta
  \end{equation*}
  and find
  \begin{align*}
    \frac{\dd}{\dd t} y(t)
    &= \frac{K}{2}
      \int_{\theta \in \T}
      \left[(h(t,\theta,\omega))^2
      + 3 (\partial_{\theta}h(t,\theta,\omega))^2\right]
      \left(\eta(t)\, e^{-i(\theta+t\omega)} + \conj{\eta(t)}\,
      e^{i(\theta+t\omega)}\right) \dd \theta \\
    &\le K |\eta(t)| z(t) + 3K |\eta(t)| y(t).
  \end{align*}
  Then Gronwall's inequality shows the claimed bound.

  If $\int_0^t |\eta(t)| \dd t < \infty$, these bounds show that there
  exist constants $C_1,C_2 \in \R$ such that for every $\omega \in \R$
  \begin{equation*}
    \int_{\theta \in \T} (h(t,\theta,\omega))^2 \dd \theta
    \le C_1 \int_{\theta \in \T} (f_{in}(\theta,\omega))^2 \dd \theta
  \end{equation*}
  and
  \begin{equation*}
    \int_{\theta \in \T} (\partial_{\theta}h(t,\theta,\omega))^2 \dd \theta
    \le C_2 \int_{\theta \in \T}
    \left[(f_{in}(\theta,\omega))^2 +
      (\partial_{\theta}f_{in}(\theta,\omega))^2\right] \dd \theta.
  \end{equation*}
  In particular the bounds are finite as the integral is over a
  compact domain and we assume $f_{in} \in C^2(\Gamma)$.

  Now consider for $t \ge s \ge 0$
  \begin{equation*}
    D(t,s) = \int_{\theta \in \T}
    (h(t,\theta,\omega)-h(s,\theta,\omega))^2 \dd \theta.
  \end{equation*}
  By the assumed regularity we can differentiate under the integral
  sign and find
  \begin{align*}
    &\partial_t D(t,s) \\
    &= \int_{\theta \in \T} 2 [\partial_{\theta}
    h(t,\theta,\omega)-\partial_{\theta} h(s,\theta,\omega)]
    \frac{K}{2i} \left(\eta(t)\, e^{-i(\theta+t\omega)} -
      \conj{\eta(t)}\, e^{i(\theta+t\omega)}\right)
    h(t,\theta,\omega) \dd \theta \\
    &\le 2 K |\eta(t)|
      (\sqrt{y(t)} + \sqrt{y(s)})
      \sqrt{z(t)}.
  \end{align*}
  Hence
  \begin{equation*}
    D(t,s) \le C \int_s^t |\eta(\bar{s})| \dd \bar{s}
  \end{equation*}
  for a constant $C$. As $\int_0^\infty |\eta(t)| \dd t < \infty$,
  this shows the claimed convergence in $L^2(\T)$ as $t \to \infty$.

  If $f_{in} \in L^2(\Gamma)$ and
  $\partial_{\theta} f_{in} \in L^2(\Gamma)$, we can integrate the
  previous inequality over $\omega$ to show the convergence of
  $h(t,\cdot,\cdot)$ in $L^2(\Gamma)$ as $t \to \infty$.
\end{proof}

The convergence in the gliding frame implies for example weak
convergence in the normal setup.

\begin{thm}
  Suppose the assumption of \Cref{thm:gliding-control} with
  $\int_0^\infty |\eta(t)| \dd t < \infty$ and
  $f_{in} \in L^2(\Gamma)$ and
  $\partial_{\theta} f_{in} \in L^2(\Gamma)$. Then for every
  $\phi \in L^2(\Gamma)$
  \begin{equation*}
    \int_{\R} \int_{\T} f(t,\theta,\omega) \phi(\theta,\omega) \dd
    \theta \dd \omega
    \to \int_{\R} \int_{\T} \frac{g(\omega)}{2\pi} \phi(\theta,\omega) \dd
    \theta \dd \omega.
  \end{equation*}
\end{thm}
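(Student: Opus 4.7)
The plan is to reduce the problem to the gliding frame, where \Cref{thm:gliding-control} already provides $L^2$ convergence $h(t,\cdot,\cdot) \to h_\infty$ for some $h_\infty \in L^2(\Gamma)$. Using $f(t,\theta,\omega)=h(t,\theta-t\omega,\omega)$ and the periodicity in $\theta$, the change of variable $\theta\mapsto\theta-t\omega$ in the spatial integral rewrites the left-hand side as
\[
  \int_\R \int_\T f(t,\theta,\omega) \phi(\theta,\omega) \dd\theta \dd\omega
  = \int_\R \int_\T h(t,\theta,\omega) \phi(\theta+t\omega,\omega) \dd\theta \dd\omega.
\]
Since $\|\phi(\cdot+t\omega,\omega)\|_{L^2(\Gamma)}=\|\phi\|_{L^2(\Gamma)}$ for every $t$, Cauchy-Schwarz together with the $L^2$ convergence of $h$ lets us replace $h(t,\cdot,\cdot)$ by $h_\infty$ up to an error that vanishes as $t\to\infty$. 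It therefore suffices to show $J(t)\to \int_\R\int_\T \frac{g(\omega)}{2\pi}\phi(\theta,\omega)\dd\theta\dd\omega$, where
\[
  J(t) := \int_\R \int_\T h_\infty(\theta,\omega) \phi(\theta+t\omega,\omega) \dd\theta \dd\omega.
\]

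Next I would expand in Fourier series on $\T$: writing $\hat h_\infty(l,\omega)=\int_\T e^{il\theta}h_\infty(\theta,\omega)\dd\theta$ and likewise $\hat\phi(l,\omega)$, Parseval and the translation identity $\widehat{\phi(\cdot+t\omega,\omega)}(l)=e^{-ilt\omega}\hat\phi(l,\omega)$ yield
\[
  J(t) = \frac{1}{2\pi}\sum_{l\in\Z}\int_\R \hat h_\infty(l,\omega)\,\hat\phi(-l,\omega)\,e^{ilt\omega}\dd\omega.
\]
Conservation of the velocity marginal gives $\int_\T h(t,\theta,\omega)\dd\theta=g(\omega)$ for every $t$, hence $\hat h_\infty(0,\omega)=g(\omega)$, and the $l=0$ term equals $\int_\R \frac{g(\omega)}{2\pi}\hat\phi(0,\omega)\dd\omega$, which is exactly the desired limit. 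For each fixed $l\neq 0$ the function $\omega\mapsto\hat h_\infty(l,\omega)\hat\phi(-l,\omega)$ is in $L^1(\R)$ by Cauchy-Schwarz in $\omega$, so the Riemann-Lebesgue lemma applied to the oscillating factor $e^{ilt\omega}$ shows the corresponding integral vanishes as $t\to\infty$.

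The main technical point is to justify interchanging the sum over $l$ with the limit $t\to\infty$. I would dominate each term uniformly in $t$ by $\|\hat h_\infty(l,\cdot)\|_{L^2(\R)}\|\hat\phi(-l,\cdot)\|_{L^2(\R)}$; then Cauchy-Schwarz in $l$ together with Plancherel on $\T$ (applied fibrewise in $\omega$ and then integrated, which is legitimate by Fubini) gives
\[
  \sum_{l\in\Z}\|\hat h_\infty(l,\cdot)\|_{L^2(\R)}\|\hat\phi(-l,\cdot)\|_{L^2(\R)}
  \le 2\pi\,\|h_\infty\|_{L^2(\Gamma)}\|\phi\|_{L^2(\Gamma)} < \infty.
\]
Dominated convergence then allows the term-by-term limit and gives the result. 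The only real obstacle is this interchange; everything else is a direct combination of Riemann-Lebesgue, Plancherel, and the already-established $L^2$ convergence in the gliding frame.
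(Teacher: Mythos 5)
Your proof is correct and follows essentially the same route as the paper: pass to the gliding frame, use the $L^2$ convergence of $h$ to replace it by $h_\infty$, isolate the $l=0$ Fourier mode in $\theta$ as the limit, and kill the $l\neq 0$ modes by oscillation with a dominated-convergence interchange justified via Cauchy--Schwarz and Plancherel. The only cosmetic difference is that you apply the Riemann--Lebesgue lemma directly to the factor $e^{ilt\omega}$ in the $\omega$ variable, whereas the paper also Fourier transforms in $\omega$ and uses that the overlap of an $L^2$ function with a translate by $lt$ tends to zero; both arguments are valid and equivalent in substance.
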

\begin{proof}
  By the previous theorem, the gliding frame density converges in
  $L^2(\Gamma)$. Let $h_{\infty}$ be the limit, i.e.
  \begin{equation*}
    h(t,\cdot,\cdot) \to h_{\infty}(\cdot,\cdot)
  \end{equation*}
  holds in $L^2(\Gamma)$ as $t \to \infty$.

  The inner product can be expressed in terms of $h$ as
  \begin{equation*}
    \int_{\R} \int_{\T} f(t,\theta,\omega) \phi(\theta,\omega) \dd
    \theta \dd \omega
    =
    \int_{\R} \int_{\T} h(t,\theta,\omega) \phi(\theta+t\omega,\omega) \dd
    \theta \dd \omega.
  \end{equation*}
  Given $\epsilon > 0$, we then have for large enough $t$ by the
  convergence in $L^2$
  \begin{equation*}
    \left|\int_{\R} \int_{\T} f(t,\theta,\omega) \phi(\theta,\omega) \dd
    \theta \dd \omega
    -
    \int_{\R} \int_{\T} h_{\infty}(\theta,\omega) \phi(\theta+t\omega,\omega) \dd
    \theta \dd \omega
    \right| \le \frac{\epsilon}{2}.
  \end{equation*}

  Let $\hat{h}_{\infty}$ and $\hat{\phi}$ be the Fourier transform in
  both variables $\theta$ and $\omega$. Then the Plancherel theorem shows
  \begin{equation*}
    \int_{\R} \int_{\T} h_{\infty}(\theta,\omega) \phi(\theta+t\omega,\omega) \dd
    \theta \dd \omega
    = \sum_{k\in \Z} \int_{\xi\in\R}
    \hat{h}_{\infty}(k,\xi)
    \hat{\phi}(k,\xi-kt) \dd \xi.
  \end{equation*}
  The velocity marginal $g$ is constant so that
  $\hat{h}_{\infty}(0,\xi) = \hat{g}(\xi)$ which shows
  \begin{equation*}
    \int_{\xi\in\R}
    \hat{h}_{\infty}(0,\xi)
    \hat{\phi}(0,\xi) \dd \xi
    = \int_{\omega\in\R} \int_{\theta\in\T} \frac{g(\omega)}{2\pi}
    \phi(\theta,\omega) \dd \theta \dd \omega.
  \end{equation*}
  For $k \not = 0$ we find that
  \begin{equation*}
    \left|\int_{\xi\in\R}
      \hat{h}_{\infty}(k,\xi)
      \hat{\phi}(k,\xi-kt) \dd \xi\right|
    \le \| \hat{h}_{\infty}(k,\cdot) \|_2
    \| \hat{\phi}(k,\cdot) \|_2
  \end{equation*}
  and
  \begin{equation*}
    \int_{\xi\in\R}
    \hat{h}_{\infty}(k,\xi)
    \hat{\phi}(k,\xi-kt) \dd \xi
    \to 0
  \end{equation*}
  as $t \to \infty$.

  Since $\hat{h} \in L^2$ and $\hat{\phi} \in L^2$, we have the bound
  \begin{equation*}
    \sum_{k\in\Z} \| \hat{h}_{\infty}(k,\cdot) \|_2
    \| \hat{\phi}(k,\cdot) \|_2
    < \infty,
  \end{equation*}
  so that dominated convergence shows as $t \to \infty$
  \begin{equation*}
    \sum_{k\in \Z} \int_{\xi\in\R}
    \hat{h}_{\infty}(k,\xi)
    \hat{\phi}(k,\xi-kt) \dd \xi
    \to
    \int_{\xi\in\R}
    \hat{h}_{\infty}(0,\xi)
    \hat{\phi}(0,\xi) \dd \xi,
  \end{equation*}
  which is the claimed limit.
\end{proof}

\bibliographystyle{plain}
\bibliography{lit}
\end{document}